\documentclass[10pt]{amsart}

\usepackage{amssymb}
\usepackage{amsthm}
\usepackage{amsmath}

\usepackage[usenames]{color}
\definecolor{ANDREW}{RGB}{255,127,0}

\usepackage{tikz}
\usetikzlibrary{arrows,calc}

\usepackage[foot]{amsaddr}

\usepackage{hyperref}

\theoremstyle{plain}
\newtheorem{proposition}{Proposition}[section]
\newtheorem{theorem}[proposition]{Theorem}
\newtheorem{lemma}[proposition]{Lemma}
\newtheorem{corollary}[proposition]{Corollary}
\theoremstyle{definition}

\newtheorem{definition}[proposition]{Definition}

\newtheorem*{assumption}{Assumption}
\newtheorem*{addassumption}{Additional Assumption}
\theoremstyle{remark}
\newtheorem{remark}[proposition]{Remark}

\DeclareMathOperator{\Aut}{Aut}

\DeclareMathOperator{\SL}{SL}
\DeclareMathOperator{\GL}{GL}

\DeclareMathOperator{\SO}{SO}
\DeclareMathOperator{\SU}{SU}

\DeclareMathOperator{\PGL}{PGL}

\DeclareMathOperator{\Hol}{Hol}

\DeclareMathOperator{\Span}{Span}

\DeclareMathOperator{\id}{id}

\DeclareMathOperator{\Sp}{Sp}
\DeclareMathOperator{\Ad}{Ad}
\DeclareMathOperator{\PU}{PU}

\DeclareMathOperator{\Ec}{\mathcal{E}}

\DeclareMathOperator{\Gc}{\mathcal{G}}

\DeclareMathOperator{\Lc}{\mathcal{L}}

\DeclareMathOperator{\Pc}{\mathcal{P}}

\DeclareMathOperator{\Bb}{\mathbb{B}}
\DeclareMathOperator{\Cb}{\mathbb{C}}
\DeclareMathOperator{\Db}{\mathbb{D}}

\DeclareMathOperator{\Nb}{\mathbb{N}}

\DeclareMathOperator{\Rb}{\mathbb{R}}

\DeclareMathOperator{\Zb}{\mathbb{Z}}

\DeclareMathOperator{\aL}{\mathfrak{a}}
\DeclareMathOperator{\gL}{\mathfrak{g}}

\newcommand{\abs}[1]{\left|#1\right|}

\newcommand{\norm}[1]{\left\|#1\right\|}
\newcommand{\wt}[1]{\widetilde{#1}}

\newcommand{\ip}[1]{\left\langle #1\right\rangle}

%%%%%%%%%%%%%%%%%%%%%%%%%%%%%%%%%%%%%%%%%%%%%%%%%%%%%%%%%%%%%%%%%%%%%%%%%%%%%%%%%%%%%%%%%%%%%%%%%%%%%%%%%%%%%%%%%

\begin{document}

\title[The automorphism group and limit set of a bounded domain]{The automorphism group and limit set of a bounded domain I: the finite type case}
\author{Andrew Zimmer}\address{Department of Mathematics, Louisiana State University, Baton Rouge, LA 70803}
\email{amzimmer@lsu.edu}
\date{\today}
\keywords{biholomorphism group, complex hyperbolic geometry, finite type domains, Kobayashi metric}
\subjclass[2010]{32M99, 32V15, 32T25, 32T27, 22F50 ,53C24, 53C35}

\maketitle

\begin{abstract} For bounded pseudoconvex domains with finite type we give a precise description of the automorphism group: if an orbit of the automorphism group accumulates on at least two different points of the boundary, then the automorphism group has finitely many components and is the almost direct product of a compact group and connected Lie group locally isomorphic to ${ \rm Aut}(\mathbb{B}_k)$. Further, the limit set is a smooth submanifold diffeomorphic to the sphere of dimension $2k-1$. As applications we prove a new finite jet determination theorem and a Tits alternative theorem. The geometry of the Kobayashi metric plays an important role in the paper. 
\end{abstract}

\section{Introduction} 

Given a domain $\Omega \subset \Cb^d$, let $\Aut(\Omega)$ denote the automorphism group of $\Omega$, that is the group of biholomorphic maps $\Omega \rightarrow \Omega$. The group $\Aut(\Omega)$ is a topological group when endowed with the compact-open topology and when $\Omega$ is bounded H. Cartan proved that $\Aut(\Omega)$ is a Lie group. We will let $\Aut_0(\Omega)$ denote the connected component of the identity in $\Aut(\Omega)$.  The \emph{limit set of $\Omega$}, denoted $\Lc(\Omega)$, is the set of points $x \in \partial \Omega$ where there exists some $z \in \Omega$ and a sequence $\varphi_n \in \Aut(\Omega)$ such that $\varphi_n(z) \rightarrow x$. When $\Omega$ is bounded, $\Aut(\Omega)$ acts properly on $\Omega$. Hence for bounded domains, $\Lc(\Omega)$ is non-empty if and only if $\Aut(\Omega)$ is non-compact. 

This is the first of a series of papers studying the group $\Aut(\Omega)$ and the set $\Lc(\Omega)$. A well understood family of examples are the so-called \emph{generalized ellipses}: 
\begin{align*}
\Ec_{m_1, \dots, m_d} = \left\{ (z_1,\dots, z_d)  \in \Cb^{d} : \abs{z_1}^{2m_1} +\dots +\abs{z_d}^{2m_d} < 1 \right\}
\end{align*}
where $m_1, \dots, m_d \in \Nb$. Webster~\cite{W1979} has given an explicit description of $\Aut(\Ec_{m_1,\dots, m_d})$ (also see~\cite{N1968,Lan1984}). First, by permuting coordinates, we may assume that 
\begin{align*}
m_1=\dots=m_k=1 < m_{k+1} \leq \dots \leq m_d.
\end{align*}
Then if $\Bb_k \subset \Cb^k$ is the unit ball and $\phi \in \Aut(\Bb_k)$, define a rational function $S_{\phi}:  \Cb^k \rightarrow \Cb$ by
\begin{align*}
S_{\phi}(z)  = \frac{1-\abs{  \phi^{-1}(0)}^2}{\left( 1-\ip{z,   \phi^{-1}(0)} \right)^2}.
\end{align*}
Also given $z=(z_1,\dots, z_d) \in \Cb^d$, let $z^k=(z_1,\dots, z_k) \in \Cb^k$.  Then Webster~\cite{W1979} showed that $\Aut(\Ec_{m_1,\dots, m_d})$ has finitely many components and $\varphi \in \Aut_0(\Ec_{m_1,\dots, m_d})$ if and only if
\begin{align*}
\varphi(z) = \left(\phi\left(z^k\right), \ z_{k+1}e^{i\theta_{k+1}}S_{\phi}\left(z^k\right)^{1/2m_{k+1}}, \dots, \ z_{d}e^{i\theta_d}S_{\phi}\left(z^k\right)^{1/2m_d} \right)
\end{align*}
for some $\phi \in \Aut(\Bb_k)$  and $\theta_{k+1},\dots, \theta_d \in \Rb$. So, if we let $N \leq \Aut(\Ec_{m_1, \dots, m_d})$ denote the subgroup of elements of the form
\begin{align*}
\varphi(z) = \left(z_1,\dots, z_k, \ z_{k+1}e^{i\theta_{k+1}} , \dots, \ z_{d}e^{i\theta_{d}}\right),
\end{align*}
then $N \leq \Aut_0(\Omega)$ is a normal compact subgroup and the quotient $\Aut_0(\Omega)/N$ is isomorphic to $\Aut(\Bb_k)$.

Webster's explicit description of the automorphism group also implies the following: if $e_1, \dots, e_d$ is the standard basis of $\Cb^d$, then
\begin{align*}
\Lc\left(\Ec_{m_1, \dots, m_d}\right) = \partial \Ec_{m_1, \dots, m_d} \cap \Span_{\Cb}\{e_1, \dots, e_k\}.
\end{align*}
So for generalized ellipses the limit set is always a smooth submanifold of the boundary which is diffeomorphic to an odd dimensional sphere.  

The main result of this paper shows that all these properties of generalized ellipses extend to pseudoconvex domains with finite type  (see Definition~\ref{defn:FT} below). Before stating the result we introduce a special class of algebraic domains. 
 
 We say a real polynomial $p: \Cb^{d} \rightarrow \Rb$ is a \emph{weighted homogeneous polynomial} if there exists positive integers $m_1, \dots, m_{d}$ such that 
 \begin{align*}
 p(t^{1/m_1}z_1, \dots, t^{1/m_{d}} z_{d}) = t p(z_1,\dots, z_d)
 \end{align*}
 for all $t > 0$ and $z_1,\dots, z_{d} \in \Cb$. 

\begin{definition} A domain $\Pc$ is called a \emph{weighted homogeneous polynomial domain} if 
\begin{align*}
\Pc = \left\{ (w,z) \in \Cb \times \Cb^{d-1} : { \rm Im}(w) >  p(z) \right\}
\end{align*}
where $p:\Cb^d \rightarrow \Rb$ is a weighted homogeneous polynomial.
\end{definition}

Notice that a weighted homogeneous polynomial domain always has non-trivial automorphisms, namely real translations in the first variable and a dilation coming from the fact that $p$ is weighted homogeneous. 

Also, given a group $G$ and subgroups $G_1, \dots, G_n \leq G$ we say that $G$ is the \emph{almost direct product of $G_1, \dots, G_n$} if $G=G_1\cdots G_n$ and distinct pairs of $G_1,\dots, G_n$ commute and have finite intersection. With this terminology we will prove the following. 
 
 \begin{theorem}\label{thm:main_ft} Suppose $\Omega \subset \Cb^d$ is a bounded pseudoconvex domain with $C^\infty$ boundary and finite type. Assume $\Lc(\Omega)$ contains at least two distinct points. Then:
 \begin{enumerate}
 \item $\Omega$ is biholomorphic to a weighted homogeneous polynomial domain.  
 \item $\Aut(\Omega)$ has finitely many connected components. 
 \item $\Aut(\Omega)$ is the almost direct product of closed subgroups $G$ and $N$ where 
 \begin{enumerate}
 \item $N$ is compact,
 \item $G$ is a connected Lie group with finite center and there exists an isomorphism $\rho:G/Z(G)\rightarrow \Aut(\Bb_k)$ for some $k \geq 1$.
 \end{enumerate}
   \item $\Lc(\Omega)$ is a smooth submanifold of $\partial \Omega$ and there exists an $\rho$-equivariant diffeomorphism $F:\Lc(\Omega) \rightarrow \partial \Bb_k$. In particular, $\Lc(\Omega)$ is an odd dimensional sphere and so either 
 \begin{enumerate}
 \item $\dim \Lc(\Omega) \leq \dim \partial \Omega - 2$ or
 \item $\Lc(\Omega) = \partial \Omega$ and $\Omega$ is biholomorphic to the unit ball. 
 \end{enumerate}
 \end{enumerate}
 \end{theorem}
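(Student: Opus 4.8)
The plan is to first put $\Omega$ into a normal form via the scaling method, then analyze the automorphism group of the resulting model, and finally read off the limit set from that analysis. The geometry of the Kobayashi metric is the tool that ties these steps together.

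First I would fix a point $x \in \Lc(\Omega)$ together with $z_0 \in \Omega$ and automorphisms $\varphi_n$ with $\varphi_n(z_0) \to x$. Since $\Omega$ is pseudoconvex of finite type, every boundary point --- in particular $x$ --- is of finite type, so the boundary can be normalized near $x$ by a polynomial change of coordinates following Catlin's normalization, and one may apply the finite type scaling method: dilate the normalized coordinates at the rate dictated by the multitype at $x$ and pass to a Carath\'eodory/Hausdorff kernel limit. I expect the rescaled domains to converge to a weighted homogeneous polynomial domain $\Pc$, and --- using that the Kobayashi metric is complete and that the finite type estimates of Catlin and McNeal make the rescaled family of maps normal --- the rescaled automorphisms should converge to a biholomorphism $\Psi : \Omega \to \Pc$. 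This yields conclusion (1).

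Working henceforth with $\Pc$ and the induced isomorphism $\Aut(\Pc)\cong\Aut(\Omega)$, I would exploit that $\Pc$ always carries the one parameter group of real translations $(w,z)\mapsto(w+t,z)$ together with the weighted dilation coming from weighted homogeneity; these fix the single boundary point at infinity, which corresponds to $x$. The hypothesis that $\Lc(\Omega)$ contains a second point $y\neq x$ is the crux: it produces an automorphism carrying infinity to a finite boundary point and hence, together with the dilation, source--sink (hyperbolic) dynamics on $\partial\Pc$. Proper action of $\Aut(\Pc)$ on $\Pc$ together with the contraction estimates for the Kobayashi metric give this action the features of a convergence action, so from two limit points I would bootstrap a rank one structure: a ping-pong and closure argument upgrades the two points to a full orbit and shows the limit set is a homogeneous compact set. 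To pin down the group itself, I would isolate the directions in which the Levi form of $\partial\Pc$ is nondegenerate (the type two directions $z_1,\dots,z_k$) from the higher type directions; the rank of the Levi form is a biholomorphic invariant of the CR structure on the boundary, so any automorphism must respect this splitting. On the nondegenerate slice the model is the standard Siegel/ball model, forcing $\Aut_0(\Pc)$ to act there as $\Aut(\Bb_k)$, while on the remaining directions it can only act through a compact torus of rotations $N$, exactly as in Webster's description of the ellipse. This should give the almost direct product $\Aut(\Omega)=G\cdot N$ with $N$ compact and $G$ connected with finite center and $G/Z(G)\cong\Aut(\Bb_k)$, together with finiteness of the component group, establishing (2) and (3). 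Transporting the slice identification through $\Psi$ then realizes $\Lc(\Omega)$ as the copy of $\partial\Bb_k$ sitting inside $\partial\Pc$, producing the $\rho$-equivariant diffeomorphism $F$ of (4); the dimension dichotomy follows because an odd sphere of dimension $2k-1$ can sit inside $\partial\Omega$ only in codimension at least two unless $k=d$, in which case $\Pc$ is the ball and $\Omega\cong\Bb_d$.

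The main obstacles I anticipate lie in the first and third steps. Establishing the precise convergence of the scaling and the existence of the biholomorphism $\Psi$ requires delicate, uniform control of the Kobayashi metric near finite type points and a proof that the kernel limit is genuinely a weighted homogeneous polynomial domain rather than something degenerate; and rigidly identifying the group as almost $\Aut(\Bb_k)$ --- excluding both the purely parabolic case, which is precisely what the second limit point rules out, and reducible or higher rank semisimple alternatives --- is where the finite type hypothesis and the two point assumption must be used in full force.
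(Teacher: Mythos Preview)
Your plan has two genuine gaps, one in each of the places you flagged.

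\textbf{Part (1): scaling.} The finite type scaling you outline is not known to converge to a biholomorphism for general pseudoconvex finite type domains in $\Cb^d$ with $d>2$. It works in $\Cb^2$ (Bedford--Pinchuk) and for convex finite type domains, but in the general pseudoconvex case Catlin's multitype does not give you the uniform Kobayashi estimates needed to make the dilated family of maps normal, nor does it guarantee the Hausdorff limit is a polynomial domain rather than something degenerate. The paper avoids this entirely. It first proves (Section~\ref{sec:elem_auto_ft}) that two limit points force the existence of a single automorphism $\varphi$ whose forward and backward orbits accumulate on distinct boundary points --- a ``hyperbolic'' element --- and then invokes S.Y.~Kim's theorem, which linearizes the action of $\varphi$ at its boundary fixed point to produce the polynomial model. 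No scaling is used; the second limit point is what manufactures the hyperbolic element, and that element is the input to Kim's result.

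\textbf{Part (3): the Levi-rank splitting.} Your claim that ``any automorphism must respect the splitting'' into type-two and higher-type directions, and that on the higher-type part the action is through a compact torus, is essentially assuming a Webster-type answer. For a general weighted homogeneous polynomial $p$ the variables need not decouple: there can be cross terms mixing directions of different weight, the Levi rank can vary along the boundary, and there is no a priori complex linear slice on which the group acts as $\Aut(\Bb_k)$. The paper takes a completely different, intrinsic route. Using the visibility property of the Kobayashi metric (Theorem~\ref{thm:visible_ft}), it shows that any noncompact solvable subgroup of $\Aut(\Omega)$ must fix a boundary point; since $\Aut_0(\Omega)$ has no boundary fixed point, its solvable radical is compact, so $\Aut_0(\Omega)$ is essentially semisimple. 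The same visibility argument forces the centralizer of a hyperbolic element to be virtually cyclic, which pins the real rank of the noncompact factor $G$ to one. Then $\Lc(\Omega)$ is identified with $G/P$ for a parabolic $P$, and the complex tangent hyperplane of $\partial\Omega$ cuts out a $G$-invariant codimension-one distribution on $G/P$; among rank-one groups only those locally isomorphic to $\SU(1,k)$ admit such a distribution. Finiteness of components and the almost direct product then come from the finiteness of $\mathrm{Out}(G)$ and an explicit construction of $N$ as the preimage of $\{\id,C\}$ under the induced map to $\Aut(\PU(1,k))$. None of this uses an explicit coordinate splitting of $\Pc$.
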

 
 \begin{remark} \ \begin{enumerate} 
 \item The definition of finite type is given in Section~\ref{sec:FT} below. 
\item We will use work of S.Y. Kim~\cite{K2010} to show that $\Omega$ is biholomorphic to a weighted homogeneous polynomial domain.  
 \item The proof of Theorem~\ref{thm:main_ft} uses Catlin's deep work on finite type domains \cite{C1987}. In the less general case of pseudoconvex domains with real analytic boundary, Catlin's results are not needed and instead one could use results of Kohn~\cite{Kohn1977} and Diederich and Forn{\ae}ss~\cite{DF1978}. 
   \item A theorem of Griffiths~\cite{G1971} implies that there exists examples of domains $\Omega \subset \Cb^2$ where $\Aut(\Omega)$ is infinite, discrete, and the quotient $\Aut(\Omega) \backslash \Omega$ is compact (see ~\cite{GR2015} for details). The last condition implies that $\Lc(\Omega) = \partial \Omega$.  These examples never have $C^2$ boundary by a theorem of Rosay~\cite{R1979}.
   \end{enumerate}
  \end{remark}
 
Theorem~\ref{thm:main_ft} provides a precise description of the algebraic structure of $\Aut(\Omega)$ and its action on $\partial \Omega$. Using this description we will prove two corollaries. 
 
The first result involves determining an automorphism from its $k$-jet. In particular, given a diffeomorphism $f:M \rightarrow M$ of a manifold $M$, let $j_k(M,f,x)$ denote the $k$-jet of $f$ at $x \in M$. Then let ${ \rm Jet}_k(M,x)$ denote the set of all $k$-jets at $x$. A well-known problem in CR-geometry is to prove that a CR-automorphism (under certain non-degeneracy conditions) is determined by its $k$-jet for some $k>0$, see for instance~\cite{BK1994, BER2000, ELZ2003,LM2007,LM2007b,J2009,BC2014}. 

By theorems of Bell and Ligocka~\cite{BL1980} and Catlin~\cite{C1987} every biholomorphism of a bounded pseudoconvex domain with finite type extends to a CR-automorphism of its boundary (see Theorem~\ref{thm:Bell1} below). In particular, if $\Omega \subset \Cb^d$ is a bounded pseudoconvex domain with finite type, $\varphi \in \Aut(\Omega)$, and $x \in \partial \Omega$, then 
\begin{align*}
j_k(\partial\Omega, \varphi, x) \in { \rm Jet}_k(\partial \Omega, x)
\end{align*}  
is well defined for any $k \geq 0$. For these extensions we prove the following finite jet determination theorem. 
 
\begin{corollary} (See Section~\ref{sec:finite_jet})\label{cor:finite_jet} Suppose $\Omega \subset \Cb^d$ is a bounded pseudoconvex domain with $C^\infty$ boundary and finite type. Assume $\Lc(\Omega)$ contains at least two distinct points.  Then:
\begin{enumerate}
\item  For any $x \in \Lc(\Omega)$ the map 
\begin{align*}
g \in \Aut(\Omega) \rightarrow j_2(\partial \Omega, g,x) \in { \rm Jet}_2(\partial \Omega, x)
\end{align*}
is injective. 
\item For any $x \in \partial \Omega \backslash \Lc(\Omega)$ the map
\begin{align*}
g \in \Aut(\Omega) \rightarrow j_1(\partial\Omega, g,x)\in { \rm Jet}_1(\partial \Omega, x)
\end{align*}
is injective. 
\item If $N$ is the subgroup from Theorem~\ref{thm:main_ft}, then for any $x \in \partial \Omega$ the map
\begin{align*}
g \in N \rightarrow j_1(\partial\Omega, g,x) \in { \rm Jet}_1(\partial \Omega, x)
\end{align*}
is injective. 
\end{enumerate}
\end{corollary}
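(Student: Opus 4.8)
The plan is to use the structure furnished by Theorem~\ref{thm:main_ft} to reduce each statement to one of two facts: the classical $2$-jet determination for automorphisms of $\Bb_k$, and a $1$-jet determination for elements of a \emph{compact} group of automorphisms. First I would record the homomorphism behind the structure theorem. Writing $\Aut(\Omega)=G\cdot N$ as in Theorem~\ref{thm:main_ft}, the intersection $G\cap N$ commutes with $G$ and so lies in $Z(G)$; hence $\rho$ extends to a continuous surjection $\rho:\Aut(\Omega)\rightarrow\Aut(\Bb_k)$ by $\rho(gn)=\rho(gZ(G))$, which is well defined and has kernel the compact group $K:=Z(G)\cdot N$ (finite center times compact $N$). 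Since $F:\Lc(\Omega)\rightarrow\partial\Bb_k$ is $\rho$-equivariant, $K$ fixes $\Lc(\Omega)$ pointwise, and $F$ conjugates the action of any $g$ on the invariant submanifold $\Lc(\Omega)$ to the action of $\rho(g)$ on $\partial\Bb_k$.

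The engine is the following \emph{compact $1$-jet lemma}: if $g$ lies in a compact subgroup of $\Aut(\Omega)$, fixes $x\in\partial\Omega$, and $j_1(\partial\Omega,g,x)=j_1(\partial\Omega,\id,x)$, then $g=\id$. To prove it, average a Riemannian metric over the compact closure $\overline{\langle g\rangle}$ to obtain a metric on $\overline\Omega$ for which $g$ is an isometry; this is legitimate because automorphisms extend smoothly to $\overline\Omega$ by~\cite{BL1980,C1987}. Then $g$ restricts to an isometry of the boundary $\partial\Omega$, which is connected for a bounded pseudoconvex domain, fixing $x$ with identity differential there; hence $g=\id$ on $\partial\Omega$, since a boundary isometry of a connected Riemannian manifold is determined by its $1$-jet at a point. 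Finally the maximum principle applied to each component of $g(z)-z$ (holomorphic on $\Omega$, continuous on $\overline\Omega$, vanishing on $\partial\Omega$) gives $g=\id$. This already yields part~(3), taking the compact group to be $N$.

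For part~(1), suppose $g$ has trivial $2$-jet at $x\in\Lc(\Omega)$. Restricting this jet to the invariant submanifold $\Lc(\Omega)$ and transporting it by the diffeomorphism $F$ shows that $\rho(g)\in\Aut(\Bb_k)$ has trivial $2$-jet at $F(x)\in\partial\Bb_k$, since conjugation by a diffeomorphism carries the identity jet to the identity jet. By the classical $2$-jet determination for the ball---a special case of Chern--Moser theory; see \cite{BER2000}---we conclude $\rho(g)=\id$, i.e.\ $g\in K$. Now $g\in K$ has trivial $1$-jet at $x$, so the compact $1$-jet lemma forces $g=\id$.

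The main obstacle is part~(2), where $x\in\partial\Omega\setminus\Lc(\Omega)$ and only the $1$-jet is available, so the element need not lie in a compact group a priori. Reducing to $f=h^{-1}g$, it suffices to show that an $f$ fixing $x\notin\Lc(\Omega)$ with trivial $1$-jet is the identity, and to apply the compact lemma I must first show that $\overline{\langle f\rangle}$ is compact. This is exactly where the Kobayashi-geometric input behind Theorem~\ref{thm:main_ft} enters: if $f^{n_j}\rightarrow\infty$ then, by properness of the action together with the source--sink dynamics of diverging sequences on $\overline\Omega$, every such sequence pushes each point of $\partial\Omega$ other than a single repelling point into $\Lc(\Omega)$; but $f^{n_j}(x)=x$ is constant, forcing $x\in\Lc(\Omega)$, a contradiction. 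Hence $\overline{\langle f\rangle}$ is compact and the compact $1$-jet lemma completes the proof. I expect this compactness of stabilizers of points off the limit set to be the delicate point, precisely because it is the step that uses the global boundary dynamics rather than the local algebraic structure.
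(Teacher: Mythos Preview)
Your proof is correct and follows essentially the same architecture as the paper's: a ``compact $1$-jet lemma'' via an invariant Riemannian metric plus a boundary identity principle handles part~(3), and for part~(1) you restrict the $2$-jet to $\Lc(\Omega)$, transport through the equivariant diffeomorphism $F$ to $\partial\Bb_k$, invoke $2$-jet determination for the ball, and then fall back on the compact case. This matches the paper almost line for line, including the observation that $N$ (hence your $K=Z(G)\cdot N=N$) fixes $\Lc(\Omega)$ pointwise.

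The only noteworthy difference is in part~(2). The paper proves that the full group $\Aut(\Omega)$ acts \emph{properly} on $M=\partial\Omega\setminus\Lc(\Omega)$ (same source--sink argument you give) and then invokes Palais' theorem to obtain an $\Aut(\Omega)$-invariant metric on the noncompact manifold $M$; local $1$-jet determination plus the Luzin--Privalov boundary uniqueness finishes the argument. You instead prove the weaker fact that the single cyclic group $\overline{\langle f\rangle}$ is compact (this is exactly the point-stabilizer case of the paper's properness statement) and average over it to get an $f$-invariant metric on the compact manifold $\partial\Omega$, so global $1$-jet determination applies directly. Your route avoids the appeal to Palais' theorem at the cost of producing a metric adapted only to the particular $f$ rather than to all of $\Aut(\Omega)$; for the purposes of this corollary that is all one needs, so your variant is a modest simplification.
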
 

\begin{remark} \ \begin{enumerate}
\item CR-automorphisms of $\partial \Bb_d$ are determined by their 2-jets, but not their 1-jets. So Corollary~\ref{cor:finite_jet} seems optimal. 
\item It was previously known that if $\Omega \subset \Cb^d$ is a bounded pseudoconvex domain with real analytic boundary, then there exists some $k >0$ such that any biholomorphism is determined by its $k$-jet at a boundary point, see~\cite[Theorem 5]{BER2000}. In the special case that $d=2$ and $\partial \Omega$ is real analytic, it was previously known that $k=2$ is sufficient, see~\cite{ELZ2003}.
\item The proof of Corollary~\ref{cor:finite_jet} part (3) is based on an argument of Huang~\cite{H1993}.
\end{enumerate}
\end{remark}

A theorem of Tits states that a subgroup of $\GL_N(\Rb)$ either contains a free group or has a finite index solvable subgroup. Using Theorem~\ref{thm:main_ft} we will prove the following. 

\begin{corollary} (See Section~\ref{sec:tits_alternative})\label{cor:tits} Suppose $\Omega \subset \Cb^d$ is a bounded pseudoconvex domain with real analytic boundary. If $H \leq \Aut(\Omega)$ is a subgroup, then either $H$ contains a free group or has a finite index solvable subgroup. 
\end{corollary}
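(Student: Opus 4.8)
The plan is to reduce the statement to Tits' theorem by showing that $\Aut(\Omega)$ is isomorphic to a closed subgroup of some $\GL_N(\Rb)$; once $\Aut(\Omega)$ is linear, every subgroup $H$ is linear, and the stated dichotomy is exactly the conclusion of Tits' theorem applied to $H$ inside $\GL_N(\Rb)$. (The form of Tits' theorem quoted above applies to arbitrary subgroups, so no finite generation of $H$ is needed.) The first observation is that a bounded pseudoconvex domain with real analytic boundary is automatically of finite type: by Diederich--Fornaess~\cite{DF1978} the boundary contains no germ of a complex analytic variety, so every boundary point is of finite type. Hence Theorem~\ref{thm:main_ft} is available whenever $\Lc(\Omega)$ contains at least two points, and I would split the argument according to the cardinality of $\Lc(\Omega)$.

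If $\Lc(\Omega)=\emptyset$, then by the discussion in the introduction $\Aut(\Omega)$ is compact, and a compact Lie group is linear by the Peter--Weyl theorem. If $\Lc(\Omega)=\{p\}$ is a single point, then, since $\Aut(\Omega)$ preserves $\Lc(\Omega)$, every automorphism fixes $p$; by the theorems of Bell--Ligocka~\cite{BL1980} and Catlin~\cite{C1987} each automorphism extends to a diffeomorphism of $\partial\Omega$, and by the finite jet determination theorem of Baouendi--Ebenfelt--Rothschild~\cite{BER2000} there is a fixed $k$ so that an automorphism is determined by its $k$-jet at $p$. Because every element fixes $p$, the map $g\mapsto j_k(\partial\Omega,g,p)$ is now a group homomorphism, and it is injective by the determination theorem; its target is the group of $k$-jets at $p$ of germs of diffeomorphisms of $\partial\Omega$ fixing $p$, which is a linear algebraic group. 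Thus $\Aut(\Omega)$ embeds in a linear group.

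The remaining, and main, case is when $\Lc(\Omega)$ contains at least two points, where Theorem~\ref{thm:main_ft} applies: $\Aut(\Omega)$ has finitely many components and $\Aut_0(\Omega)$ is the almost direct product $N\cdot G$ of a compact group $N$ and a connected Lie group $G$ with finite center and $G/Z(G)\cong\Aut(\Bb_k)\cong\PU(k,1)$. Here I would establish linearity by producing a faithful finite-dimensional representation: $N$ is linear by Peter--Weyl, while $G$ is a finite central extension of $\PU(k,1)$, hence semisimple with finite center and therefore linear. Since $N$ and $G$ commute, $\Aut_0(\Omega)$ is the quotient of $N\times G$ by the finite central subgroup corresponding to $N\cap G$, and one wants to assemble faithful representations of the two factors into a single faithful representation of this quotient, and then of all of $\Aut(\Omega)$ using that it has finitely many components. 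The hard part is precisely this assembly: because $N$ and $G$ share the finite central subgroup $N\cap G$, a naive direct sum of faithful representations of $N$ and $G$ need not descend faithfully to the quotient, so one must choose the two representations so that they agree on $N\cap G$. The obstruction to doing so is governed by the finite group $N\cap G$ and can be removed, which is what makes $\Aut(\Omega)$ --- a reductive Lie group with compact center and finitely many components --- linear.

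Having shown in every case that $\Aut(\Omega)$ is isomorphic to a closed subgroup of some $\GL_N(\Rb)$, I would finish by applying Tits' theorem to the subgroup $H\leq\Aut(\Omega)\leq\GL_N(\Rb)$: either $H$ contains a free group, or $H$ has a finite index solvable subgroup, which is the assertion of the corollary.
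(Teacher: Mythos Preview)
Your argument follows exactly the same three-case split as the paper's proof in Section~\ref{sec:tits_alternative}: $\Aut(\Omega)$ compact, $\Lc(\Omega)$ a single point, and $\Lc(\Omega)$ with at least two points. You also invoke the same auxiliary results in each case (Diederich--Fornaess for finite type, the Baouendi--Ebenfelt--Rothschild jet determination theorem in the single-point case, and Theorem~\ref{thm:main_ft} in the two-point case). The difference is in the endgame: the paper does not try to prove that $\Aut(\Omega)$ is linear. It quotes the Tits alternative in the form valid for an arbitrary Lie group with finitely many components (Theorem~\ref{thm:tits}), and in each case simply checks that the ambient group has finitely many components---trivially in the compact case, by Theorem~\ref{thm:main_ft} in the two-point case, and by embedding into a jet group (which is a Lie group with finitely many components) in the single-point case. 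This version of Tits follows from the linear case via the adjoint representation: $\ker\Ad$ is virtually abelian, and any surjection onto a free group splits, so one never needs the group itself to be linear.

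Your detour through linearity introduces a genuine gap in the two-point case. The assertion that $G$ is linear because it is ``semisimple with finite center'' is false in general: the metaplectic group $\mathrm{Mp}(2,\Rb)$, the connected double cover of $\SL_2(\Rb)\cong\SU(1,1)$, is connected, simple, has center of order four, is locally isomorphic to $\SU(1,1)$, and yet has no faithful finite-dimensional representation. The same example refutes the broader claim that a reductive Lie group with compact center and finitely many components is linear. Theorem~\ref{thm:main_ft} only tells you that $G/Z(G)\cong\PU(k,1)$; it does not identify which finite cover of $\PU(k,1)$ the group $G$ is, so you cannot exclude a non-linear cover without further work. The paper's route bypasses this entirely: once you know $\Aut(\Omega)$ has finitely many components, you apply Theorem~\ref{thm:tits} directly and are done.
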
 

\begin{remark}\label{rmk:real_analytic} \ \begin{enumerate}
\item A result of Diederich and Forn{\ae}ss~\cite{DF1978} implies that every bounded pseudoconvex domain with real analytic boundary has finite type, see the discussion in~\cite[Section 4.1.4]{DA1993}.
\item  In the proof of Corollary~\ref{cor:tits} we consider three cases: when $\Aut(\Omega)$ is compact, when $\Lc(\Omega)$ is a single point, and when $\Lc(\Omega)$ contains at least two distinct points. The assumption that $\partial \Omega$ is real analytic instead of just having finite type is only used in the case when $\Lc(\Omega)$ is a single point. 
\end{enumerate}
\end{remark}

 \subsection{Prior Work and Motivation}Our main motivation for Theorem~\ref{thm:main_ft} comes from the old problem of characterizing, up to biholomorphism, the domains which have large automorphism groups and reasonable boundaries. This can be seen as an analogue of the Riemann Mapping Theorem for higher dimensions. 
 
 The first major result along these lines is the Wong-Rosay Ball Theorem. 
 
\begin{theorem}[Wong and Rosay Ball Theorem \cite{R1979, W1977}]
Suppose $\Omega \subset \Cb^d$ is a bounded strongly pseudoconvex domain. Then $\Aut(\Omega)$ is non-compact if and only if $\Omega$ is biholomorphic to the unit ball. 
\end{theorem}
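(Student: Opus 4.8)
The plan is to prove the nontrivial implication: if $\Aut(\Omega)$ is noncompact then $\Omega$ is biholomorphic to the ball $\Bb_d$. The reverse implication is immediate, since $\Aut(\Bb_d)$ acts transitively and hence noncompactly. I would argue via the \emph{scaling method} of Pinchuk, using Kobayashi metric estimates to control the rescaling. First, since $\Omega$ is bounded, $\Aut(\Omega)$ acts properly, so noncompactness forces an orbit to accumulate on $\partial \Omega$: fixing $z_0 \in \Omega$ there exist $\varphi_n \in \Aut(\Omega)$ and $x \in \partial \Omega$ with $p_n := \varphi_n(z_0) \to x$. Because $\Omega$ is strongly pseudoconvex, I can choose local holomorphic coordinates centered at $x$ in which the boundary is strongly convex and osculated to second order by the model hypersurface $\{ \Imaginary(w) = \abs{z'}^2 \}$; the unbounded realization of the ball is then the Siegel domain $\Sigma = \{ (w,z') \in \Cb \times \Cb^{d-1} : \Imaginary(w) > \abs{z'}^2 \}$.

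Second, let $\delta_n = \mathrm{dist}(p_n, \partial \Omega) \to 0$. I would construct explicit polynomial automorphisms $\Lambda_n$ of $\Cb^d$ — a translation sending the nearest boundary point of $p_n$ to the origin, a unitary rotation aligning the complex normal direction, and the anisotropic dilation scaling the normal coordinate by $\delta_n^{-1}$ and the tangential coordinates by $\delta_n^{-1/2}$ — so that $\Lambda_n(p_n)$ equals a fixed base point $q \in \Sigma$ and the rescaled domains $\Lambda_n(\Omega)$ converge to $\Sigma$ in the local Hausdorff topology. Setting $\psi_n = \Lambda_n \circ \varphi_n$, each $\psi_n : \Omega \to \Lambda_n(\Omega)$ is a biholomorphism onto its image with $\psi_n(z_0) = q$.

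Third — and this is the crux — I would show that the sequence $\psi_n$ subconverges to a \emph{biholomorphism} $\psi : \Omega \to \Sigma$. Normality of $\{ \psi_n \}$ follows from the standard comparison of the Kobayashi metric with the Euclidean metric on the converging domains $\Lambda_n(\Omega)$, which are uniformly strongly pseudoconvex near $q$; this also pins the limit $\psi$ away from being constant, since $\psi(z_0) = q \in \Sigma$. The same estimates applied to the inverse family $\psi_n^{-1}$ produce a holomorphic limit $\Sigma \to \overline{\Omega}$, and the distance-decreasing property of the Kobayashi metric forces the two limits to be mutually inverse, so the image lies in the interior and $\psi$ is a biholomorphism. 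The main obstacle is exactly this step: ruling out degeneration of the limit map (escape to the boundary or rank collapse) and verifying surjectivity, which is where the controlled blow-up rate $\delta_n^{-1}$ of the Kobayashi metric at a strongly pseudoconvex point is essential.

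Finally, composing $\psi$ with the Cayley transform identifying $\Sigma$ with $\Bb_d$ yields a biholomorphism $\Omega \to \Bb_d$, completing the proof.
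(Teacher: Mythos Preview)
The paper does not prove this theorem at all; it is quoted as background and attributed to the original references of Wong and Rosay. So there is no ``paper's own proof'' to compare against. Your sketch via Pinchuk's scaling method is a standard and correct modern route to the result, and the outline you give is sound: orbit accumulation at a strongly pseudoconvex boundary point, Narasimhan-type local convexification, anisotropic dilations $(\delta_n^{-1},\delta_n^{-1/2})$ so that the rescaled domains Hausdorff-converge to the Siegel domain $\Sigma$, and then a normal-families argument for both $\psi_n$ and $\psi_n^{-1}$ to produce a biholomorphism onto $\Sigma$. You correctly flag the delicate step --- non-degeneracy and surjectivity of the limit --- and the cure you indicate (two-sided Kobayashi-metric estimates at a strongly pseudoconvex point, giving a uniform lower bound on $k_{\Lambda_n(\Omega)}$ near $q$) is exactly what is used in the literature. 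One small point worth making explicit: the inverse maps $\psi_n^{-1}$ are defined on the moving domains $\Lambda_n(\Omega)$, so to extract a limit on $\Sigma$ you first need that every compact $K\subset\Sigma$ is eventually contained in $\Lambda_n(\Omega)$; this follows from the local Hausdorff convergence, but should be stated. Note also that the original arguments of Wong and Rosay predate the scaling method and proceed differently (via asymptotic behavior of invariant metrics and a localization argument), so your approach, while correct, is historically distinct from the cited proofs.
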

 
Since Wong and Rosay's work, there have been a number of characterizations of domains with non-compact automorphism group amongst special classes of domains, see for instance~\cite{GK1987, K1992, BP1994, W1995, V2009} and the survey paper~\cite{IK1999}. In this paper we focus on the following related problem: characterize the possible automorphism groups of domains with reasonable boundaries. 

Theorem~\ref{thm:main_ft} is also motivated by a result of Zaitsev who proved for algebraic domains that $\Aut(\Omega)$ has finitely many components. 
 
 \begin{theorem}\label{thm:D}\cite[Theorem 1.2, Corollary 1.1]{Z1995}
 Suppose $D \subset \Cb^d$ is a bounded algebraic domain. Then $\Aut(D)$ has finitely many components. 
 \end{theorem}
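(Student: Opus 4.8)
The plan is to realize $\Aut(D)$ as a semialgebraic subset of a finite-dimensional space and then invoke the structure theorem that every semialgebraic set has finitely many connected components. The crux is therefore to prove the purely geometric statement that every $\varphi \in \Aut(D)$ is the restriction to $D$ of a rational map $\Cb^d \to \Cb^d$ whose degree is bounded by a constant $M = M(p,d)$ depending only on the real polynomial $p$ defining $D$ and on the ambient dimension $d$. Merely knowing (by H. Cartan) that $\Aut(D)$ is a Lie group is not enough, since a Lie group can have infinitely many components; the uniform degree bound is what forces finiteness.

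To obtain algebraicity I would first pass to the boundary. Writing $D$ as a component of $\{p < 0\}$, its boundary lies in the real-algebraic hypersurface $\{p = 0\}$, and the smooth, Levi-nondegenerate locus $\Sigma \subset \partial D$ is dense because the bad set is contained in a proper real-algebraic subset. Each $\varphi$ extends smoothly to $\Sigma$ and restricts there to a CR diffeomorphism, via boundary regularity and reflection near generic points (Lewy, Pinchuk); no global pseudoconvexity is needed since one only works on the generic part of the boundary. I would then complexify $p$ to a polynomial $P(z,\bar w)$ with $P(z,\bar z)=p(z)$ and form the Segre varieties $Q_w = \{z : P(z,\bar w)=0\}$, an algebraically parametrized family of complex-algebraic varieties. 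Since $\varphi$ preserves $\partial D$, its holomorphic extension carries Segre varieties to Segre varieties, $\varphi(Q_w) \subseteq Q_{\varphi(w)}$. This is precisely the setting of Webster's algebraicity theorem: pairing $\varphi$ against the algebraic Segre family yields a nontrivial polynomial identity satisfied by $\varphi$, so $\varphi$ is rational, and tracking the degrees that enter the associated elimination/resultant identities produces a bound depending only on $\degree p$ and $d$, which is the uniform $M$.

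Finally I would conclude from semialgebraicity. The space $\Rat_M$ of rational self-maps of $\Cb^d$ of degree at most $M$ is parametrized by the coefficients of numerators and denominators, hence is (a quotient of) a real-algebraic variety. The coefficient tuples that actually define automorphisms of $D$ are cut out by semialgebraic conditions: that $p \circ \varphi$ vanish on $\{p=0\}$, which by elimination becomes a polynomial condition on the coefficients, together with the symmetric condition for $\varphi^{-1}$ and the open condition $\varphi(D) = D$. Checking that this parametrization is a homeomorphism onto its image in the compact-open topology (routine for rational maps of bounded degree away from their poles), one realizes $\Aut(D)$ as a semialgebraic set, which therefore has finitely many connected components.

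I expect the main obstacle to be the algebraicity-with-uniform-degree step: ensuring the Segre/reflection argument remains valid uniformly across all of $\Aut(D)$ despite possible singularities and Levi-degeneracies of $\partial D$, and extracting a single degree bound $M$ independent of the particular $\varphi$. Once that quantitative bound is in hand, the semialgebraic realization and the finiteness of components follow in a standard way.
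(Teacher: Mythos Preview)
This theorem is not proved in the paper; it is quoted as a result of Zaitsev, and the only indication of method is the remark immediately following it: Zaitsev shows that $\Aut(D)$ carries the structure of an affine Nash group such that the action $\Aut(D)\times D\to D$ is Nash, and finiteness of components then follows from general properties of such groups. Your proposal is essentially a sketch of that same argument in slightly different language. The uniform degree bound for automorphisms via Webster--Segre algebraicity is precisely what underlies the Nash structure, and your semialgebraic realization of $\Aut(D)$ inside a coefficient space is the same conclusion phrased without the Nash-group vocabulary (Nash sets being, roughly, locally semialgebraic). The obstacle you flag---making the Segre/reflection argument and the degree bound uniform in the presence of Levi-degeneracies and singular boundary points---is exactly the technical work Zaitsev carries out. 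So there is no proof in this paper to compare against, but your outline matches the strategy of the cited source. Note separately that the paper's own finiteness-of-components argument, for its main theorem about finite-type domains, goes by a completely different route (finiteness of $\mathrm{Out}(G)$ for semisimple $G$) and does not attempt to reprove Zaitsev's theorem.
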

 
 \begin{remark} A domain $\Omega \subset \Cb^d$ is called \emph{a bounded algebraic domain} if there exists a real valued polynomial $p: \Cb^d \rightarrow \Rb$ such that $\Omega$ is a bounded connected component of $\{ z \in \Cb^d : p(z) < 0\}$ and $\nabla p(z) \neq 0$ for all $z \in \partial \Omega$. \end{remark}
 
 Zaitsev actually shows that $\Aut(D)$ has the structure of an affine Nash group such that the map $\Aut(D) \times D \rightarrow D$ is Nash. It then follows from basic properties of such groups that $\Aut(D)$ has finitely many components. Our approach to showing the biholomorphism group has finitely many components is different and is based on the classical fact that the outer automorphism group of a semisimple Lie group is finite.
 
 Another motivation for Theorem~\ref{thm:main_ft} comes from work of Isaev and Krantz. Suppose $M$ is a Kobayashi hyperbolic complex manifold, then a biholomorphism of $M$ is determined by its 1-jet of any point. So when $M$ has complex dimension $d$, the automorphism group $\Aut(M)$ has real dimension at most 
 \begin{align*}
 \dim_{\Rb} M + \dim_{\Rb} { \rm U}(d) = 2d+d^2.
 \end{align*}
Further, if $\dim_{\Rb} \Aut(M)=2d+d^2$ it is easy to see that $M$ must be biholomorphic to the unit ball in $\Cb^d$. In fact, there is a gap in the dimension of $\Aut(M)$.
  
 \begin{theorem}[Isaev and Krantz~\cite{IK2001}]\label{thm:gap_auto} Suppose $M$ is a Kobayashi hyperbolic complex manifold. If $M$ has complex dimension $d$ and is not biholomorphic to the unit ball in $\Cb^d$, then 
 \begin{align*}
 \dim_{\Rb} \Aut(M) \leq  2+d^2.
 \end{align*}
 \end{theorem}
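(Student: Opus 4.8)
The plan is to run a dimension count against the isotropy representation, showing that $\Aut(M)$ can be large only when it forces $M$ to be homogeneous and isotropic. Fix a base point $p \in M$ and let $K_p = \{g \in \Aut(M) : g(p) = p\}$. Since $M$ is hyperbolic, $\Aut(M)$ acts properly and a biholomorphism is determined by its $1$-jet at any point; hence $K_p$ is compact and the differential $g \mapsto d_p g$ embeds $K_p$ into $\GL(T_pM) \cong \GL_d(\Cb)$. A compact subgroup of $\GL_d(\Cb)$ is conjugate into $\mathrm{U}(d)$, so $\dim_\Rb K_p \le d^2$, and writing $\Oc_p = \Aut(M)\cdot p$ for the orbit,
\[
\dim_\Rb \Aut(M) = \dim_\Rb \Oc_p + \dim_\Rb K_p \le 2d + d^2,
\]
recovering the bound mentioned above. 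I would prove the contrapositive of the theorem: if $\dim_\Rb \Aut(M) \ge d^2 + 3$, then $M$ is biholomorphic to $\Bb_d$.

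Assume $\dim_\Rb \Aut(M) \ge d^2 + 3$. Since $\dim_\Rb \Oc_p \le 2d$ for every $p$, the displayed identity gives $\dim_\Rb K_p \ge d^2 + 3 - 2d = (d-1)^2 + 2$ for every $p \in M$. The first key step is a classification of large subgroups: identifying $K_p^0$ with a connected compact subgroup $H \le \mathrm{U}(d)$, the bound $\dim_\Rb H > (d-1)^2 + 1 = \dim_\Rb\bigl(\mathrm{U}(1)\times \mathrm{U}(d-1)\bigr)$ forces $H$ to act $\Cb$-irreducibly on $T_pM$, since otherwise it would stabilize a proper complex subspace and embed into a block group $\mathrm{U}(k)\times\mathrm{U}(d-k)$, all of dimension at most $(d-1)^2+1$. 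Among $\Cb$-irreducible connected compact subgroups of $\mathrm{U}(d)$, the dimension bound $(d-1)^2+2$ exceeds the dimensions of all the maximal ones other than those containing $\SU(d)$ (namely $\SO(d)$, $\Sp(d/2)$, and tensor-product subgroups), so $H \supseteq \SU(d)$. In particular $K_p^0$ acts transitively on the real unit sphere of $T_pM$, and acts real-irreducibly.

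With this in hand the manifold structure collapses quickly. Because $K_p$ preserves $T_p\Oc_p \subseteq T_pM$ and acts real-irreducibly, and $\dim_\Rb \Oc_p = \dim_\Rb\Aut(M) - \dim_\Rb K_p \ge 3 > 0$, we must have $T_p\Oc_p = T_pM$; thus every orbit is open. As the orbits partition the connected manifold $M$, a single open orbit is also closed, so $M = \Oc_p$ is a single orbit and $\Aut(M)$ acts transitively. Now $M$ is a homogeneous manifold whose compact isotropy acts transitively on the unit sphere of the tangent space; averaging the $K_p$-invariant inner product produces an $\Aut(M)$-invariant Hermitian metric, for which $M$ is an isotropic, equivalently two-point homogeneous, Riemannian manifold. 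By the classification of two-point homogeneous spaces, $M$ is a rank-one symmetric space; the constraints that $M$ is Kobayashi hyperbolic (ruling out the compact and Euclidean types) and that the isotropy lies in $\mathrm{U}(d)$ and acts $\Cb$-linearly (ruling out the real and quaternionic hyperbolic types) leave only complex hyperbolic space, i.e.\ $M$ is biholomorphic to $\Bb_d$.

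I expect the main obstacle to be the subgroup-dimension classification in the second paragraph: showing that every connected compact subgroup of $\mathrm{U}(d)$ of dimension at least $(d-1)^2+2$ contains $\SU(d)$, and is therefore transitive on the sphere. This is where one must invoke the classification of maximal subgroups of $\SU(d)$ and verify the dimension inequalities, including handling the small cases $d=2,3$ by hand. The remaining steps, namely openness of the orbits, homogeneity, and the final identification with the ball via two-point homogeneity, are then essentially formal.
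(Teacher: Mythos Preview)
The paper does not prove this theorem at all: Theorem~\ref{thm:gap_auto} is quoted from Isaev and Krantz~\cite{IK2001} purely as motivation for studying the dimension of $\Lc(\Omega)$, and no argument for it appears anywhere in the text. So there is no ``paper's own proof'' to compare against.

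That said, your outline is a faithful sketch of the Isaev--Krantz strategy. The dimension identity $\dim_{\Rb}\Aut(M)=\dim_{\Rb}\Oc_p+\dim_{\Rb}K_p$, the lower bound $\dim_{\Rb}K_p\ge (d-1)^2+2$, the reduction to classifying connected compact subgroups of $\mathrm{U}(d)$ of large dimension, and the conclusion that the isotropy is transitive on the unit sphere are exactly the steps in their paper. You are also right that the crux is the subgroup classification: one must show that any connected closed $H\le\mathrm{U}(d)$ with $\dim_{\Rb}H\ge (d-1)^2+2$ contains $\SU(d)$, and this requires going through the list of maximal connected subgroups (reducible blocks $\mathrm{U}(k)\times\mathrm{U}(d-k)$, $\SO(d)$, $\Sp(d/2)$, tensor factors) and checking dimensions, with low $d$ handled separately.

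Two points in the final identification deserve a bit more care than you give them. First, the passage from ``isotropic Hermitian homogeneous space'' to ``rank-one Riemannian symmetric space'' is fine, but you then need that the \emph{complex} structure agrees with the standard one on complex hyperbolic space, not just the Riemannian structure; this follows because the isotropy acts $\Cb$-linearly and contains $\SU(d)$, forcing the almost complex structure to be the $K_p$-invariant one. Second, ``Kobayashi hyperbolic rules out the compact and Euclidean types'' is a little quick: compact Kobayashi hyperbolic manifolds certainly exist, but a compact rank-one Hermitian symmetric space is $\Cb\Pb^k$, which contains rational curves and hence is not hyperbolic, while the flat case is excluded because $\Cb^d$ is not hyperbolic. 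With those two clarifications your outline is complete.
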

 
This gap in the dimension of $\Aut(M)$ motivated our investigation into the possible dimensions of $\Lc(\Omega)$.

To the best of our knowledge, Theorem~\ref{thm:main_ft} is the first result which establishes for a large classes of bounded domains that the automorphism group must be, up to a compact factor and finite index subgroup, a specific Lie group.

\subsection{Structure of the paper} Section~\ref{sec:Prelims} contains some preliminary remarks. Sections~\ref{sec:elem_auto_ft} through~\ref{sec:proof_ft} are devoted to the proof of Theorem~\ref{thm:main_ft}. The proofs of Corollaries~\ref{cor:finite_jet} and~\ref{cor:tits} appear in Sections~\ref{sec:finite_jet} and~\ref{sec:tits_alternative} respectively. At the end of the paper, there is a brief appendix describing some basic properties of semisimple Lie groups and the symmetric spaces they act on. 

  \subsection{Outline of the Proof of Theorem~\ref{thm:main_ft}}

The starting point of our proof is the following result of S.Y. Kim~\cite{K2010}.
 
 \begin{theorem}[S.Y. Kim]\label{thm:SYkim} Suppose $\Omega \subset \Cb^d$ is a bounded pseudoconvex domain with $C^\infty$ boundary and finite type. If there exits an automorphism $\varphi \in \Aut(\Omega)$ such that $\varphi^n(z) \rightarrow x^+$ and $\varphi^{-n}(z) \rightarrow x^-$ for some $z \in \Omega$ and distinct $x^+,x^- \in \partial \Omega$, then $\Omega$ is biholomorphic to a weighted homogeneous polynomial domain. 
 \end{theorem}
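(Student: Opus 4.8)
The plan is to use the scaling (dilation) method: I will use the hyperbolic dynamics of $\varphi$ to drive an anisotropic rescaling that converges to an explicit biholomorphism from $\Omega$ onto the weighted homogeneous model attached to the boundary point $x^+$. First I would localize at $x^+$. By the extension results of Bell--Ligocka~\cite{BL1980} and Catlin~\cite{C1987} the map $\varphi$ extends to a diffeomorphism of $\overline{\Omega}$, so the orbit points $p_n := \varphi^n(z)$ converge to $x^+ \in \partial\Omega$ from inside $\Omega$. Since $\partial\Omega$ is pseudoconvex of finite type, Catlin's multitype analysis supplies, after an affine normalization placing $x^+$ at the origin with $\Omega \subset \{\Imag w > 0\}$ to first order, local holomorphic coordinates $(w,z)\in \Cb\times\Cb^{d-1}$ and positive integer weights $m_1,\dots,m_{d-1}$ in which $\Omega$ has a defining function of the form $\Imag w - P(z) + (\text{terms of higher weighted order})$, where $P$ is a weighted homogeneous polynomial of weighted degree one. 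This $P$ is the candidate defining polynomial for the model $\Pc = \{\Imag w > P(z)\}$.

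Next I would build the scaling sequence. Using the weights I introduce the anisotropic dilations $\Delta_\lambda(w,z) = (\lambda w,\ \lambda^{1/m_1}z_1,\dots,\lambda^{1/m_{d-1}}z_{d-1})$, which preserve $\Pc$ precisely because $P$ is weighted homogeneous, and which send the higher weighted order terms of the defining function to zero as $\lambda\to\infty$. Choosing $\lambda_n\to\infty$ matched to the rate at which $p_n$ approaches $\partial\Omega$, and precomposing with the automorphisms of the model needed to move the image of $p_n$ to the fixed interior point $(i,0)$, I form $\Phi_n := \Delta_{\lambda_n}\circ\iota\circ\varphi^n$, where $\iota$ is the coordinate chart. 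Each $\Phi_n$ is a biholomorphism of $(\Omega,z)$ onto a domain $\Omega_n$, and by construction $\Omega_n$ converges to $\Pc$ in the local Hausdorff sense.

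The core of the argument is to show that $(\Phi_n)$ and $(\Phi_n^{-1})$ are normal families whose limits are mutually inverse, so that the limit $\Phi_\infty:\Omega\to\Pc$ is a biholomorphism. Both $\Omega$ and the model $\Pc$ are complete Kobayashi hyperbolic, hence taut, and Catlin's subelliptic estimates provide sharp comparisons between the Kobayashi metric of $\Omega$ and the polydisc geometry adapted to the weights $m_j$; these bounds control the distortion of $\Phi_n$ and $\Phi_n^{-1}$ on compact sets, yielding convergent subsequences and forcing $\Phi_\infty$ to be injective and non-degenerate with image inside $\Pc$. For surjectivity I would use the dynamics: because $\varphi$ is an isometry of the Kobayashi metric, $p_n$ approaches $x^+$ at a geometric rate, so $\lambda_{n+1}/\lambda_n$ tends to a constant $\mu>1$ and $\Phi_\infty$ intertwines $\varphi$ with a nontrivial dilation $\Delta_\mu$ of $\Pc$. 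Thus $\Phi_\infty(\Omega)$ is a $\Delta_\mu$-invariant subdomain containing a neighborhood of $(i,0)$, and the presence of the distinct repelling point $x^-$ (which makes $\varphi$ genuinely hyperbolic rather than parabolic) forces this invariant subdomain to exhaust all of $\Pc$. Since the part of the defining function surviving the rescaling is exactly the weighted homogeneous $P$, the target $\Pc$ is a weighted homogeneous polynomial domain, as required.

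I expect the main obstacle to be the non-degeneracy and surjectivity of the scaling limit: a priori the rescaled maps could collapse to a lower-dimensional image, or converge to a biholomorphism onto a proper subdomain of $\Pc$. Excluding the first possibility is exactly where the finite type hypothesis is essential, since Catlin's estimates prevent the adapted polydiscs from degenerating under the dilations $\Delta_{\lambda_n}$; excluding the second is where the distinct repelling point $x^-$ is used, as it upgrades the one-sided scaling limit to a map intertwining the full hyperbolic flow and hence onto the entire model.
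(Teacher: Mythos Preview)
The paper does not itself prove this theorem; it quotes it from S.Y.~Kim~\cite{K2010}. The Remark immediately following the statement, together with the discussion in the proof of Theorem~\ref{thm:SYKim2}, describes Kim's strategy: one first shows that the differential $d(\varphi)_{x^+}$ on $\partial\Omega$ is a hyperbolic linear map, and then constructs a holomorphic linearization of $\varphi$ near $x^+$ (a Poincar\'e--Sternberg type argument adapted to the CR setting). The resulting biholomorphism $\Psi:\Omega\to\Pc$ conjugates $\varphi$ to an explicit linear contraction $(w,z)\mapsto(\mu w,Dz)$ with $0<\mu<1$ and $D$ diagonal, and extends smoothly near $x^+$ as an infinitesimal CR-isomorphism sending $x^+$ to $0$. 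Your proposal is a genuinely different route: rather than linearizing the dynamics, you run a Pinchuk--Bedford--Pinchuk scaling argument using Catlin's multitype to choose the anisotropic dilations. Both methods ultimately exploit the hyperbolicity of $\varphi$, but Kim's linearization produces the biholomorphism directly from the contracting germ, while scaling produces it as a normal-family limit of rescaled iterates.

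There is, however, a genuine gap in your outline. You assert that ``both $\Omega$ and the model $\Pc$ are complete Kobayashi hyperbolic, hence taut,'' and use this to control the normal families $(\Phi_n)$ and $(\Phi_n^{-1})$. But it is an open problem whether every bounded pseudoconvex domain of finite type in $\Cb^d$, $d\geq 3$, is Kobayashi complete; the paper explicitly flags this just before Theorem~\ref{thm:visible_ft}. Without completeness you cannot invoke tautness off the shelf, and you would need to substitute a direct argument (local peak functions, or Catlin's subelliptic estimates themselves) to obtain the required compactness. Your surjectivity step is also underspecified: a $\Delta_\mu$-invariant open subdomain of $\Pc$ containing a neighborhood of $(i,0)$ need not be all of $\Pc$ (for instance $\{\Imag w > 2P(z)\}\subset\{\Imag w > P(z)\}$ is dilation-invariant), so the appeal to the repelling point $x^-$ must be made precise---you need to show that $\Phi_\infty$ extends the two-sided dynamics in a way that actually forces exhaustion.
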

 
 \begin{remark} Given $\Omega$ and $\varphi \in\Aut(\Omega)$ as in the statement of Theorem~\ref{thm:SYkim}, work of Bell and Ligocka~\cite{BL1980} and Catlin~\cite{C1987} implies that $\varphi$ extends to a diffeomorphism of $\partial \Omega$. Then it is easy to see that $\varphi(x^\pm) = x^\pm$.  Kim's strategy is to show that $d(\varphi)_{x^+}$ is a hyperbolic matrix and then construct a linearization of the action of $\varphi$ on $\partial \Omega$ near $x^+$.  See~\cite{K2012, KK2008b, S1997, S1995} for similar results.
 \end{remark}
 
In Section~\ref{sec:elem_auto_ft}, we show that when $\Omega$ is a bounded pseudoconvex domain with finite type and $\Lc(\Omega)$ contains two points, then there exists some $\varphi \in \Aut(\Omega)$ such that the forward orbit and backward orbit of $\varphi$ accumulate on two different points of $\partial \Omega$. Hence by S.Y. Kim's result, $\Omega$ is biholomorphic to a weighted homogeneous polynomial domain and in particular $\Aut_0(\Omega)$ is non-trivial.

The next step in the proof is to use a result from~\cite{BZ2017} which shows that the Kobayashi metric on a finite type domain behaves, in some sense, like a negatively curved Riemannian manifold, see Theorem~\ref{thm:visible_ft} below. In Section~\ref{sec:solvable_subgp}, we use this result  to restrict the possible solvable subgroups of $\Aut(\Omega)$. This allows us to deduce, in Section~\ref{sec:proof_ft}, that $\Aut(\Omega)$ has finitely many components and is the almost direct product of a compact subgroup $N$ and a simple Lie group $G$ with real rank one and finite center.

Since $G$ has real rank one, $G$ acts by isometries on a negatively curved Riemannian symmetric space $X$. By the classification of such spaces $X$ is either a real hyperbolic space, a complex hyperbolic space, a quaternionic hyperbolic space, or the Cayley-hyperbolic plane. We will construct a $G$-equivariant diffeomorphism from the geodesic boundary of $X$ to $\Lc(\Omega)$. Then by using the complex geometry of $\Lc(\Omega)$ and facts about negatively curved symmetric spaces, we will deduce that $X$ must be a complex hyperbolic space and hence $G$ is locally isomorphic to $\SU(1,k)$ for some $k$. This implies that $G/Z(G)$ is isomorphic to $\Aut(\Bb_k)$.

 \subsection*{Acknowledgements} 
I would like to thank Gautam Bharali, Nordine Mir, Ralf Spatzier, and Dmitri Zaitsev for helpful comments on an earlier version of this manuscript. This material is based upon work supported by the National Science Foundation under grants DMS-1400919, DMS-1700079, DMS-1760233, and DMS-1904099.

 \section{Preliminaries}\label{sec:Prelims}
 
 \subsection{Finite type domains}\label{sec:FT} In this subsection we recall the definition of finite type (in the sense of D'Angelo). 
 
 Let $M$ be a $C^\infty$-smooth real hypersurface in $\Cb^d$ and let $r$ be a local defining function for $M$. For $p \in \Cb^d$, let $\mathcal{G}(0,p)$ denote the set of germs of non constant holomorphic maps $f$ from $\Cb$ to $\Cb^d$, such that $f(0) = p$. If $g$ is a smooth function defined in a neighborhood of $0 \in \Cb$, we denote by $\nu(g)$ the order of vanishing of the function $g - g(0)$ at the origin. Following~\cite{DA1979}, the {\sl type} $\tau(M,p)$ of $M$ at $p \in M$ is defined by
$$
\tau(M,p):=\sup_{f \in \mathcal{G}(0,p)}\frac{\nu(r \circ f)}{\nu(f)}.
$$
Then the hypersurface $M$ is of {\sl finite type} (in the sense of D'Angelo) if $\tau(M,p) < \infty$ for every $p \in M$. 

Through out the paper we will use the following terminology. 

\begin{definition}\label{defn:FT} A bounded pseudoconvex domain $\Omega \subset \Cb^d$ has \emph{finite type} if $\partial \Omega$ is $C^\infty$ and has finite type (in the sense of D'Angelo).
\end{definition}
 
  \subsection{The Kobayashi metric} 

 In this expository subsection we recall the definition of the Kobayashi metric and state some of its basic properties.  For a more thorough introduction see~\cite{A1989} or~\cite{K2005}.
 
 Given a domain $\Omega \subset \Cb^d$ the \emph{(infinitesimal) Kobayashi metric} is the pseudo-Finsler metric
\begin{align*}
k_{\Omega}(x;v) = \inf \left\{ \abs{\xi} : f \in \Hol(\Db, \Omega), \ f(0) = x, \ d(f)_0(\xi) = v \right\}.
\end{align*}
By a result of Royden~\cite[Proposition 3]{R1971} the Kobayashi metric is an upper semicontinuous function on $\Omega \times \Cb^d$. In particular, if $\sigma:[a,b] \rightarrow \Omega$ is an absolutely continuous curve (as a map $[a,b] \rightarrow \Cb^d$), then the function 
\begin{align*}
t \in [a,b] \rightarrow k_\Omega(\sigma(t); \sigma^\prime(t))
\end{align*}
is integrable and we can define the \emph{length of $\sigma$} to  be
\begin{align*}
\ell_\Omega(\sigma)= \int_a^b k_\Omega(\sigma(t); \sigma^\prime(t)) dt.
\end{align*}
One can then define the \emph{Kobayashi pseudo-distance} to be
\begin{multline*}
 K_\Omega(x,y) = \inf \left\{\ell_\Omega(\sigma) : \sigma\colon[a,b]
 \rightarrow \Omega \text{ is abs. cont., } \sigma(a)=x, \text{ and } \sigma(b)=y\right\}.
\end{multline*}
This definition is equivalent to the standard definition using analytic chains by a result of Venturini~\cite[Theorem 3.1]{V1989}.

When $\Omega$ is a bounded domain, $K_\Omega$ is actually a metric. Further, directly from the definition, $\Aut(\Omega)$ acts by isometries on $(\Omega, K_\Omega)$. 

\subsection{Almost-geodesics}

A \emph{geodesic} in a metric space $(X,d)$ is a curve $\sigma: I \rightarrow X$ such that 
\begin{align*}
d(\sigma(s), \sigma(t))=\abs{t-s}
\end{align*}
for all $s,t \in I$. When the Kobayashi metric is Cauchy complete, every two points are joined by a geodesic. However, it is often more convenient to work with larger classes of curves.

\begin{definition}\label{def:almost_geodesic}
Suppose $\Omega \subset \Cb^d$ is a bounded domain and $I \subset \Rb$ is an interval. For $\lambda \geq 1$ and $\kappa \geq 0$ a curve $\sigma:I \rightarrow \Omega$ is called an \emph{$(\lambda, \kappa)$-almost-geodesic} if 
\begin{enumerate} 
\item for all $s,t \in I$  
\begin{align*}
\frac{1}{\lambda} \abs{t-s} - \kappa \leq K_\Omega(\sigma(s), \sigma(t)) \leq \lambda \abs{t-s} +  \kappa;
\end{align*}
\item $\sigma$ is absolutely continuous (hence $\sigma^\prime(t)$ exists for almost every $t\in I$), and for almost every $t \in I$
\begin{align*}
k_\Omega(\sigma(t); \sigma^\prime(t)) \leq \lambda e^{\kappa}.
\end{align*}
\end{enumerate}
\end{definition}

\begin{remark} In~\cite[Proposition 4.6]{BZ2017}, we proved that every geodesic in the Kobayashi metric is an $(1,0)$-almost-geodesic. \end{remark}

There are several reasons to study almost-geodesics instead of geodesics. First almost-geodesics always exist: for domains $\Omega$ where the metric space $(\Omega, K_\Omega)$ is not Cauchy complete there may not be a geodesic joining every two points, but there is always an $(1,\kappa)$-almost-geodesic joining any two points in $\Omega$, see \cite[Proposition 4.4]{BZ2017}. Further, it is sometimes possible to find explicit almost-geodesics, see for instance Proposition~\ref{prop:translate_ft} below. Finally, almost-geodesics are close enough to geodesics that one can establish properties about their behavior, see Theorem~\ref{thm:visible_ft} below.

 \subsection{The geometry of the Kobayashi metric}
 
 In this subsection we recall some results about the geometry of the Kobayashi metric on finite type domains. It is unknown if the Kobayashi metric is Cauchy complete on every finite type domain, but we still have some negative curvature type behavior. 
 
 \begin{theorem}\cite[Theorem 1.4]{BZ2017}\label{thm:visible_ft} Fix $\lambda \geq 1$ and $\kappa \geq 0$. Suppose $\Omega \subset \Cb^d$ is a bounded pseudoconvex domain with $C^\infty$ boundary and finite type. Assume that $\sigma_n:[a_n,b_n] \rightarrow \Omega$ is a sequence of $(\lambda, \kappa)$-almost-geodesics. If $\sigma_n(a_n) \rightarrow x \in \partial \Omega$, $\sigma_n(b_n) \rightarrow y \in \partial \Omega$, and $x \neq y$, then there exists $n_k \rightarrow \infty$ and $t_k \in [a_{n_k}, b_{n_k}]$ such that the sequence $\sigma_{n_k}(t_k)$ converges in $\Omega$.  \end{theorem}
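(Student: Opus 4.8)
The plan is to argue by contradiction, exploiting that the Kobayashi metric of a finite type domain blows up at a definite \emph{polynomial} rate near the boundary, whereas the length of an almost-geodesic is controlled only \emph{logarithmically}. Write $\delta(z) = \operatorname{dist}(z,\partial\Omega)$. First I would reformulate the conclusion: for a bounded domain a sequence $\sigma_{n_k}(t_k)$ subconverges in $\Omega$ precisely when $\delta(\sigma_{n_k}(t_k))$ stays bounded away from $0$, so the conclusion fails if and only if $\epsilon_n := \sup_{t \in [a_n,b_n]} \delta(\sigma_n(t)) \to 0$. Assuming for contradiction that $\epsilon_n \to 0$, each entire almost-geodesic $\sigma_n$ lies within $\epsilon_n$ of $\partial\Omega$. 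Since $x \neq y$, fix disjoint closed balls $\overline{B}_x \ni x$ and $\overline{B}_y \ni y$; for large $n$ the endpoints lie in $B_x$ and $B_y$, so I extract the sub-arc $\tau_n = \sigma_n|_{[s_n,t_n]}$ where $\sigma_n$ last leaves $B_x$ and first enters $B_y$. Then $\tau_n$ is again an $(\lambda,\kappa)$-almost-geodesic, its endpoints $p_n,q_n$ satisfy $\abs{p_n-q_n} \geq c_0 > 0$ for a fixed $c_0$, and every point of $\tau_n$ has $\delta \leq \epsilon_n$.

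Next I would bound $\ell_\Omega(\tau_n)$ from below. Since $\partial\Omega$ is compact and of finite type, the type is bounded by some even integer $2m$, and Catlin's estimates \cite{C1987} give a constant $c>0$ and a neighborhood of $\partial\Omega$ on which $k_\Omega(z;v) \geq c\,\abs{v}/\delta(z)^{1/(2m)}$ for every $v$ (the complex tangential directions give the weakest bound). As $\tau_n$ stays within $\epsilon_n$ of the boundary, integrating yields
\[
\ell_\Omega(\tau_n) \ \geq\ \frac{c}{\epsilon_n^{1/(2m)}} \int \abs{\tau_n'}\,dt \ \geq\ \frac{c}{\epsilon_n^{1/(2m)}}\,\abs{p_n-q_n} \ \geq\ \frac{c\,c_0}{\epsilon_n^{1/(2m)}},
\]
which tends to $\infty$ polynomially in $1/\epsilon_n$.

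For the matching upper bound I would combine the almost-geodesic inequalities with a logarithmic estimate for $K_\Omega$. Condition (2) of Definition~\ref{def:almost_geodesic} gives $\ell_\Omega(\tau_n) \leq \lambda e^\kappa (t_n-s_n)$, and condition (1) gives $t_n - s_n \leq \lambda K_\Omega(p_n,q_n) + \lambda\kappa$. An inward-normal path argument, using the universal bound $k_\Omega(w;v) \leq \abs{v}/\delta(w)$ (from the disc of radius $\delta(w)$ centered at $w$), shows $K_\Omega(p_n,q_n) \leq A\log(1/\epsilon_n) + C$ for constants $A,C$, since $\delta(p_n),\delta(q_n) \leq \epsilon_n$ and $p_n,q_n$ lie in a fixed bounded region. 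Combining,
\[
\frac{c\,c_0}{\epsilon_n^{1/(2m)}} \ \leq\ \ell_\Omega(\tau_n) \ \leq\ \lambda^2 e^\kappa\bigl(A\log(1/\epsilon_n) + C + \kappa\bigr),
\]
whose left side grows polynomially while the right side grows only logarithmically as $\epsilon_n \to 0$ --- a contradiction for $n$ large.

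The reduction and the extraction of $\tau_n$ are soft; the heart of the matter is the lower bound $k_\Omega(z;v) \gtrsim \abs{v}/\delta(z)^{1/(2m)}$, which is exactly where pseudoconvexity and finite type enter through Catlin's deep estimates. The main point requiring care is the \emph{uniformity} of this bound over the whole boundary, which I would secure from the compactness of $\partial\Omega$ and the local boundedness of the type.
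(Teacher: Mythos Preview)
The paper does not prove this theorem; it is quoted verbatim from \cite{BZ2017} and used as a black box throughout, so there is no in-paper argument to compare against.

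That said, your strategy is the correct one and is essentially what \cite{BZ2017} does: contrapose, assume the almost-geodesics stay in an $\epsilon_n$-collar of $\partial\Omega$ with $\epsilon_n\to 0$, extract a sub-arc whose endpoints are a fixed Euclidean distance apart, and then play a polynomial-in-$1/\epsilon_n$ lower bound for its Kobayashi length against a logarithmic-in-$1/\epsilon_n$ upper bound coming from the almost-geodesic inequalities together with the elementary estimate $K_\Omega(p,q)\lesssim \log(1/\delta(p))+\log(1/\delta(q))+O(1)$. One point deserves care: the boundary blow-up estimate $k_\Omega(z;v)\geq c\,\abs{v}\,\delta(z)^{-\epsilon}$ for some $\epsilon>0$ is not contained in \cite{C1987}, which is about subelliptic estimates for the $\bar\partial$-Neumann problem, not directly about the infinitesimal Kobayashi metric. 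You need a separate result converting finite type and pseudoconvexity into such a lower bound on $k_\Omega$; this is exactly what \cite{BZ2017} isolates as an axiom (their ``visibility'' framework) and then verifies for finite type domains using known local estimates. With that input in hand, the rest of your argument---the sub-arc extraction, the two length bounds, and the polynomial-versus-logarithmic contradiction---is clean.
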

 
  \begin{remark} Informally this theorem says that almost-geodesics bend into the domain like geodesics in the Poincar{\'e} disc model of real hyperbolic 2-space.
 \end{remark}
 
 As a corollary we have the following.
 
 \begin{corollary}  Suppose $\Omega \subset \Cb^d$ is a bounded psuedoconvex domain with $C^\infty$ boundary and finite type. If $\sigma:[0,\infty) \rightarrow \Omega$ is an almost-geodesic, then
 \begin{align*}
 \lim_{t \rightarrow \infty} \sigma(t)
 \end{align*}
 exists.
 \end{corollary}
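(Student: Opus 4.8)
The plan is to show that $\sigma(t)$ is a Cauchy sequence as $t \to \infty$ in the ambient Euclidean topology of $\Cb^d$, which, since $\Omega$ is bounded, forces convergence to some point of $\conj{\Omega}$. First I would argue by contradiction: suppose $\lim_{t\to\infty}\sigma(t)$ does not exist. Since $\conj{\Omega}$ is compact, the limit set of the ray $\sigma$ in $\conj{\Omega}$ is a nonempty, compact, connected subset. If this limit set meets $\Omega$, then some sequence $\sigma(t_n)$ with $t_n \to \infty$ stays in a compact part of $\Omega$; but $\Aut(\Omega)$ acts properly and $\sigma$ is an almost-geodesic, so $K_\Omega(\sigma(0),\sigma(t)) \to \infty$ as $t \to \infty$ (by the lower bound $\tfrac1\lambda|t|-\kappa$ in Definition~\ref{def:almost_geodesic}), which together with properness of the metric on relatively compact subsets forces $\sigma(t) \to \partial\Omega$. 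Hence the entire limit set lies in $\partial\Omega$.

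Now the failure of convergence means there exist two distinct accumulation points $x \neq y$ in $\partial\Omega$ and sequences $s_n \to \infty$, $t_n \to \infty$ with $\sigma(s_n) \to x$ and $\sigma(t_n) \to y$. By passing to a subsequence I may assume $s_n < t_n < s_{n+1}$. The key step is to feed the restricted sub-curves into Theorem~\ref{thm:visible_ft}. Consider the restrictions $\sigma_n := \sigma|_{[s_n,t_n]}$; each is again a $(\lambda,\kappa)$-almost-geodesic (the defining inequalities are inherited by sub-intervals), with $\sigma_n(s_n)\to x$ and $\sigma_n(t_n)\to y$ and $x\neq y$. Theorem~\ref{thm:visible_ft} then provides a subsequence $n_k$ and times $\tau_k \in [s_{n_k},t_{n_k}]$ such that $\sigma(\tau_k)$ converges to some interior point $p \in \Omega$.

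This is exactly the contradiction I want: the points $\tau_k$ satisfy $\tau_k \to \infty$ (since $\tau_k \geq s_{n_k} \to \infty$), so $\sigma(\tau_k)$ is a subsequence of the ray that accumulates at an interior point $p\in\Omega$, contradicting what was established in the first paragraph (that every accumulation point of the ray lies in $\partial\Omega$). I expect the main obstacle to be the bookkeeping in the first paragraph, namely making precise the claim that the ray escapes every compact subset of $\Omega$; this rests on the lower almost-geodesic bound $K_\Omega(\sigma(0),\sigma(t))\geq \tfrac1\lambda t - \kappa$ combined with the fact that on a bounded domain $K_\Omega$-balls around a fixed point are relatively compact in $\Omega$ (a standard consequence of properness of the $\Aut$-action, or directly of the comparison of $K_\Omega$ with the Euclidean distance near $\partial\Omega$). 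Once escaping to the boundary is secured, Theorem~\ref{thm:visible_ft} does all the remaining work.
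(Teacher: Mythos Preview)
Your argument is essentially the paper's own proof, organized slightly differently: both assume two distinct boundary accumulation points, apply Theorem~\ref{thm:visible_ft} to the restricted segments to produce interior accumulation, and contradict this with the almost-geodesic lower bound $K_\Omega(\sigma(0),\sigma(t))\geq \tfrac{1}{\lambda}t-\kappa$. The paper is simply more terse: it skips your first paragraph and derives the contradiction directly from the inequality, without first recording that the ray leaves every compact set.

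One point to clean up. Your justification that the ray escapes compacta invokes ``properness of the $\Aut$-action'' and the claim that ``$K_\Omega$-balls around a fixed point are relatively compact in $\Omega$.'' Neither is appropriate here: properness of the automorphism action is irrelevant to this metric statement, and relative compactness of Kobayashi balls is equivalent to Cauchy completeness of $(\Omega,K_\Omega)$, which the paper explicitly notes is \emph{not known} for finite type domains. Fortunately you do not need that direction at all. What you actually need is the trivial converse: $K_\Omega(\sigma(0),\cdot)$ is continuous on $\Omega$, hence bounded on any compact $K\subset\Omega$; since $K_\Omega(\sigma(0),\sigma(t))\to\infty$, the ray must eventually leave $K$. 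With that correction your proof is complete and matches the paper's.
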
 
 
 \begin{proof}
 Suppose that $\sigma:[0,\infty) \rightarrow \Omega$ is an almost-geodesic and there exists sequences $s_n, t_n \rightarrow \infty$ such that $\sigma(s_n) \rightarrow x$, $\sigma(t_n) \rightarrow y$, and $x \neq y$. By passing to subsequences we can further assume that $s_n \leq t_n$ for all $n$. Since $x \neq y$, Theorem~\ref{thm:visible_ft} implies that
  \begin{align*}
 \sup_{n \in \Nb} K_\Omega\Big( \sigma(0), \sigma([s_n,t_n]) \Big) < \infty.
 \end{align*}
But since $\sigma$ is a almost-geodesic we have
   \begin{align*}
K_\Omega\Big( \sigma(0), \sigma([s_n,t_n]) \Big)\geq \frac{1}{\lambda}s_n - \kappa
 \end{align*}
for some $\lambda \geq 1$ and $\kappa \geq 0$. So we have a contradiction.
 \end{proof}

 \subsection{Smooth extensions to the boundary} By results of Catlin~\cite{C1987} and Bell and Ligocka \cite{BL1980}, if $\Omega$ is a bounded pseudoconvex domain with $C^\infty$ boundary and finite type, then each $\varphi \in \Aut(\Omega)$ extends to a diffeomorphism of $\overline{\Omega}$. Later Bell proved that the induced homomorphism $\Aut(\Omega) \rightarrow { \rm Diff}(\overline{\Omega})$ is continuous in the Whitney topology, see~\cite{B1987}. This implies, by a classical result of Montgomery and Zippin, that the map 
  \begin{align*}
 \Aut(\Omega) \times \overline{\Omega} \rightarrow \overline{\Omega} \\
 (\varphi,z) \rightarrow \varphi(z)
 \end{align*}
 is smooth, see~\cite[Chapter 5]{MZ1956}.  So:
  
 \begin{theorem}\label{thm:Bell1} Suppose $\Omega \subset \Cb^d$ is a bounded pseudoconvex domain with $C^\infty$ boundary and finite type. The map 
 \begin{align*}
 \Aut(\Omega) \times \Omega \rightarrow \Omega \\
 (\varphi,z) \rightarrow \varphi(z)
 \end{align*}
 extends to a smooth map $\Aut(\Omega) \times \overline{\Omega} \rightarrow \overline{\Omega}$. 
 \end{theorem}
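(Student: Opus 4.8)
The plan is to realize the extended action as a continuous homomorphism into a diffeomorphism group and then invoke a regularity theorem to upgrade continuity to joint smoothness. First I would use the smooth extension results of Catlin~\cite{C1987} and Bell--Ligocka~\cite{BL1980}: every $\varphi \in \Aut(\Omega)$ extends to a diffeomorphism $\overline{\varphi}$ of $\overline{\Omega}$. Since $\Omega$ is dense in $\overline{\Omega}$ and $\overline{\varphi}$ is continuous, this extension is \emph{unique}; consequently, if $\varphi, \psi \in \Aut(\Omega)$, then $\overline{\varphi} \circ \overline{\psi}$ and $\overline{\varphi \circ \psi}$ are both diffeomorphism extensions of $\varphi \circ \psi$ and hence coincide. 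Thus $\varphi \mapsto \overline{\varphi}$ defines a group homomorphism $\Phi : \Aut(\Omega) \rightarrow \mathrm{Diff}(\overline{\Omega})$, and the map in the statement is exactly the associated evaluation map $(\varphi, z) \mapsto \Phi(\varphi)(z)$.

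Next I would record the two facts that feed the regularity theorem. On the interior, the action $\Aut(\Omega) \times \Omega \rightarrow \Omega$ is already smooth (indeed real-analytic), since $\Aut(\Omega)$ is a Lie group acting by biholomorphisms. Globally, Bell's theorem~\cite{B1987} asserts that $\Phi$ is continuous when $\mathrm{Diff}(\overline{\Omega})$ is given the Whitney $C^\infty$ topology; this is the essential analytic input, for it says that \emph{all} derivatives of the boundary extension $\overline{\varphi}$ depend continuously on $\varphi$ as it ranges over the Lie group $\Aut(\Omega)$. With a continuous homomorphism from the Lie group $\Aut(\Omega)$ into $\mathrm{Diff}(\overline{\Omega})$ in hand, the classical differentiability theorem of Montgomery and Zippin~\cite[Chapter 5]{MZ1956} applies and promotes the continuous action to a jointly smooth one, giving that $(\varphi, z) \mapsto \overline{\varphi}(z)$ is $C^\infty$ on $\Aut(\Omega) \times \overline{\Omega}$.

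The step I expect to require the most care is the passage across the boundary, i.e.\ verifying the hypotheses of the Montgomery--Zippin theorem in the manifold-with-boundary setting. Because $\overline{\Omega}$ is a compact manifold with boundary and each $\overline{\varphi}$ preserves $\partial \Omega$, one can either invoke the manifold-with-boundary version of the theorem directly or first pass to the double of $\overline{\Omega}$, on which the extensions still act smoothly. The reason the theorem is applicable at all is precisely that Bell's continuity is in the strong ($C^\infty$) Whitney topology rather than merely the $C^0$ compact-open topology: continuity of the finite-order jets of $\overline{\varphi}$ in $\varphi$ is exactly what the regularity theorem converts into joint differentiability. Once joint smoothness is established it automatically restricts to the already-known smoothness on the interior, so no compatibility issue arises.
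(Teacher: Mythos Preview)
Your proposal is correct and follows essentially the same approach as the paper: Catlin and Bell--Ligocka give the individual smooth extensions, Bell's theorem gives continuity of $\Aut(\Omega) \to \mathrm{Diff}(\overline{\Omega})$ in the Whitney topology, and Montgomery--Zippin upgrades this to joint smoothness. The paper's argument is given in a single paragraph preceding the theorem statement and is strictly less detailed than yours; in particular, your explicit verification that $\varphi \mapsto \overline{\varphi}$ is a homomorphism and your remark about the manifold-with-boundary hypothesis are not spelled out in the paper.
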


 We will also use the following theorem of Bell.
 
 \begin{theorem}[Bell~\cite{B1987}]\label{thm:Bell2} Suppose $\Omega \subset \Cb^d$ is a bounded pseudoconvex domain with $C^\infty$ boundary and finite type. If $z_0 \in \Omega$ and $\varphi_n  \in \Aut(\Omega)$ is a sequence of automorphisms with $\varphi_n(z_0) \rightarrow x \in \partial \Omega$ and $\varphi_n^{-1}(z_0) \rightarrow y \in \partial \Omega$. Then $\varphi_n(z)$ converges locally uniformly on $\overline{\Omega} \setminus \{ y\}$ to $x$ and $\varphi_n^{-1}(z)$ converges locally uniformly on $\overline{\Omega} \setminus \{ x\}$ to $y$.
 \end{theorem}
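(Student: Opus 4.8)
The plan is to first establish convergence on the interior $\Omega$ by a normal families argument, and then to promote it to the boundary using the visibility of the Kobayashi metric (Theorem~\ref{thm:visible_ft}). By symmetry—replacing $\varphi_n$ by $\varphi_n^{-1}$ and interchanging the roles of $x$ and $y$—it suffices to treat the forward maps $\varphi_n$.

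For the interior, I would argue as follows. Since $\Omega$ is bounded, the family $\{\varphi_n\}$ is normal, so every subsequence has a further subsequence converging locally uniformly on $\Omega$ to a holomorphic map $f:\Omega\rightarrow\overline{\Omega}$. By the classical dichotomy for limits of automorphisms of a bounded domain (H.\ Cartan), either $f\in\Aut(\Omega)$ or $f(\Omega)\subset\partial\Omega$. The first case cannot occur: if $\varphi_{n_k}\rightarrow f\in\Aut(\Omega)$, then, $\Aut(\Omega)$ being a topological group, $\varphi_{n_k}^{-1}\rightarrow f^{-1}$ locally uniformly, whence $\varphi_{n_k}^{-1}(z_0)\rightarrow f^{-1}(z_0)\in\Omega$, contradicting $\varphi_n^{-1}(z_0)\rightarrow y\in\partial\Omega$. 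Hence $f(\Omega)\subset\partial\Omega$, and since $\Omega$ has finite type its boundary contains no non-constant holomorphic image of $\Db$; therefore $f$ is a constant $c\in\partial\Omega$, and evaluating at $z_0$ gives $c=\lim\varphi_{n_k}(z_0)=x$. As every subsequence thus has a further subsequence converging to the constant $x$, I conclude $\varphi_n\rightarrow x$ locally uniformly on $\Omega$.

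For the boundary, the key geometric input I would extract from Theorem~\ref{thm:visible_ft} is the following lemma: if $a_n\rightarrow a$ and $b_n\rightarrow b$ with $a,b\in\partial\Omega$ and $a\neq b$, then $K_\Omega(a_n,b_n)\rightarrow\infty$. Indeed, were $K_\Omega(a_n,b_n)$ bounded along a subsequence, joining $a_n$ to $b_n$ by $(1,\kappa)$-almost-geodesics $\sigma_n$ (which exist by \cite[Proposition 4.4]{BZ2017}) produces curves of bounded parameter length; Theorem~\ref{thm:visible_ft} then yields a point $\sigma_{n_k}(t_k)\rightarrow w\in\Omega$ lying within bounded Kobayashi distance of $a_{n_k}\rightarrow a\in\partial\Omega$, which is impossible because interior points of a bounded domain with smooth finite-type boundary lie at infinite Kobayashi distance from $\partial\Omega$. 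Using this lemma, I would fix a compact $K\subset\overline{\Omega}\setminus\{y\}$ and suppose uniform convergence to $x$ fails on $K$; passing to subsequences I obtain $p_n\rightarrow p\in K$ (so $p\neq y$) with $\varphi_n(p_n)\rightarrow x'\neq x$. Comparing with $\varphi_n(z_0)\rightarrow x$ through the isometry $K_\Omega(\varphi_n(z),\varphi_n(z_0))=K_\Omega(z,z_0)$ for interior points near $p$, the lemma forces the relevant images toward $x$, producing the desired contradiction.

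The hard part will be exactly this last step: promoting locally uniform convergence on $\Omega$ to locally uniform convergence on $\overline{\Omega}\setminus\{y\}$, that is, controlling the maps $\varphi_n$ at and near boundary points, where the Kobayashi distance is no longer finite. I expect to handle it by proving uniform equicontinuity up to $\partial\Omega$ of the family $\{\varphi_n\}$ on compact subsets of $\overline{\Omega}\setminus\{y\}$: the quantitative upper and lower bounds for the Kobayashi metric on finite-type domains that underlie Theorem~\ref{thm:visible_ft} show that Kobayashi balls are Euclidean-small near $\partial\Omega$, so the isometric action of $\varphi_n$ cannot separate nearby boundary points while keeping their images away from $y$. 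Combined with the smooth boundary extension of the action (Theorem~\ref{thm:Bell1}) and the Arzel\`a--Ascoli theorem, this upgrades the interior convergence to convergence on $\overline{\Omega}\setminus\{y\}$, and the symmetric statement for $\varphi_n^{-1}$ on $\overline{\Omega}\setminus\{x\}$ follows verbatim.
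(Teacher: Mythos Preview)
The paper does not prove this statement; Theorem~\ref{thm:Bell2} is quoted from Bell~\cite{B1987}, whose argument runs through Condition~R for the Bergman projection (supplied for finite-type domains by Catlin's subelliptic estimates) and localization of the Bergman kernel, not through the Kobayashi metric. Your proposal is therefore an independent alternative, and the interior half is standard and correct.

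The boundary half has two genuine gaps. First, your lemma rests on the assertion that interior points lie at infinite Kobayashi distance from $\partial\Omega$; for a length metric this is equivalent to Cauchy completeness of $K_\Omega$, which the paper explicitly flags as open for finite-type domains (see the sentence preceding Theorem~\ref{thm:visible_ft}). Second, and independently, the equicontinuity mechanism you propose for the last step does not work: to conclude that the Kobayashi-isometric $\varphi_n$ are Euclidean-equicontinuous near $\partial\Omega$ you would need Euclidean-close interior points to be Kobayashi-close, and this fails already for the unit ball---two points on an inward normal at heights $\delta$ and $\delta^2$ are Euclidean-close but at Kobayashi distance of order $\log(1/\delta)$. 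Theorem~\ref{thm:Bell1} guarantees smoothness of each individual extension but gives no uniform control once $\varphi_n\to\infty$ in $\Aut(\Omega)$, so Arzel\`a--Ascoli is not available. Bell's original proof sidesteps both issues by working with the Bergman kernel rather than the Kobayashi distance.
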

 
 \subsection{Limit sets of subgroups}
 
Given a domain $\Omega \subset \Cb^d$ and a subgroup $H \leq \Aut(\Omega)$ the \emph{limit set of $H$}, denoted $\Lc(\Omega;H)$, is the set of points $x \in \partial \Omega$ where there exists some $z \in \Omega$ and a sequence $h_n \in H$ such that $h_n(z) \rightarrow x$.
 
 \begin{proposition}\label{prop:subgroups} Suppose $\Omega \subset \Cb^d$ is a bounded pseudoconvex domain with $C^\infty$ boundary and finite type. If $H \leq \Aut(\Omega)$ is a subgroup, then $\Lc(\Omega; H)$ is a closed subset of $\partial \Omega$. If $N \leq \Aut(\Omega)$ normalizes $H$, then $\Lc(\Omega; H)$ is $N$-invariant. 
 \end{proposition}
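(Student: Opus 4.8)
The plan is to reduce everything to a single fixed base point by first proving the following base-point independence lemma: if $z,w \in \Omega$ and $h_n \in \Aut(\Omega)$ is a sequence with $h_n(z) \to x \in \partial\Omega$, then $h_n(w) \to x$ as well. Granting this, a point $x \in \partial\Omega$ lies in $\Lc(\Omega;H)$ if and only if $x \in \overline{H \cdot z_0}$ for any single fixed $z_0 \in \Omega$, and both assertions of the proposition become short. The main work, and the main obstacle, is this lemma.

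To prove it, I would fix an almost-geodesic $\sigma\colon [a,b] \to \Omega$ joining $z = \sigma(a)$ to $w = \sigma(b)$, which exists by \cite[Proposition 4.4]{BZ2017}. Since each $h_n$ is a biholomorphism it is an isometry of $K_\Omega$ and preserves $k_\Omega$, so $h_n \circ \sigma$ is again a $(\lambda,\kappa)$-almost-geodesic with the same constants, now joining $h_n(z)$ to $h_n(w)$. After passing to a subsequence I may assume $h_n(w) \to y \in \overline\Omega$, and I must rule out both $y \in \Omega$ and $y \in \partial\Omega \setminus \{x\}$. If $y \in \Omega$, then $h_n$ carries the fixed point $w$ into a fixed compact subset of $\Omega$, so properness of the $\Aut(\Omega)$-action forces a subsequence $h_{n_k} \to h \in \Aut(\Omega)$, whence $h_{n_k}(z) \to h(z) \in \Omega$, contradicting $h_n(z) \to x \in \partial\Omega$. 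If $y \in \partial\Omega$ with $y \neq x$, then Theorem~\ref{thm:visible_ft} applied to the almost-geodesics $h_n \circ \sigma$ produces $n_k \to \infty$ and $t_k \in [a,b]$ with $h_{n_k}(\sigma(t_k))$ converging in $\Omega$; since the $\sigma(t_k)$ lie in the compact set $\sigma([a,b]) \subset \Omega$, the same properness argument again yields $h_{n_k}(z) \to$ an interior point, a contradiction. Thus $y = x$.

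With the lemma in hand, closedness is immediate: for any fixed $z_0 \in \Omega$ one has $\Lc(\Omega;H) = \overline{H \cdot z_0} \cap \partial\Omega$, the intersection of two closed subsets of $\overline\Omega$, hence closed in $\partial\Omega$. For $N$-invariance, suppose $N$ normalizes $H$, let $x \in \Lc(\Omega;H)$, and choose $h_m \in H$ with $h_m(z_0) \to x$. For $n \in N$ set $g_m = n h_m n^{-1} \in H$; then $g_m(n(z_0)) = n(h_m(z_0))$, and since $n$ extends to a homeomorphism of $\overline\Omega$ preserving $\partial\Omega$ by Theorem~\ref{thm:Bell1}, this converges to $n(x) \in \partial\Omega$. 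Hence $n(x) \in \Lc(\Omega;H)$, and applying the same reasoning with $n^{-1}$ gives equality, so $\Lc(\Omega;H)$ is $N$-invariant.

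The crux is the base-point independence lemma: a priori two $\Aut(\Omega)$-orbits merely stay a bounded Kobayashi distance apart, and on a general finite type domain a bounded Kobayashi distance near the boundary need not force convergence to the same boundary point. The visibility statement of Theorem~\ref{thm:visible_ft} — that almost-geodesics between distinct boundary points must bend back into the interior — is precisely what upgrades ``bounded distance apart'' to ``same limit point,'' while properness of the action is what turns the resulting interior accumulation into a contradiction.
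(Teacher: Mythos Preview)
Your proof is correct, but the route differs from the paper's. The paper obtains base-point independence in one stroke from Bell's convergence theorem (Theorem~\ref{thm:Bell2}): after passing to a subsequence so that $\varphi_n^{-1}(z_0)$ converges to some $y \in \partial\Omega$, that result already gives $\varphi_n(z) \to x$ locally uniformly on $\overline{\Omega}\setminus\{y\}$, hence for every $z \in \Omega$; closedness then follows by a diagonal argument, and $N$-invariance by combining Theorems~\ref{thm:Bell1} and~\ref{thm:Bell2}. You instead prove base-point independence via the visibility property (Theorem~\ref{thm:visible_ft}) together with properness of the $\Aut(\Omega)$-action, and only invoke the smooth extension Theorem~\ref{thm:Bell1} for the $N$-invariance step. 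The paper's approach is shorter and uses tools already set up for later sections; your approach is more intrinsic to the Kobayashi geometry and would transfer to any bounded domain satisfying the visibility conclusion of Theorem~\ref{thm:visible_ft}, without needing the $\overline{\partial}$-Neumann machinery behind Bell's theorem. One minor expository point: after passing to a subsequence with $h_n(w)\to y$ and deriving $y=x$, you should remark that this forces the full sequence $h_n(w)$ to converge to $x$, since every subsequence has a further subsequence with limit $x$.
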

 
 \begin{proof} Suppose $x_m \in \Lc(\Omega; H)$ and $x_m \rightarrow x \in \partial \Omega$. Then there exists $z_m \in \Omega$ and sequences $\varphi^{(m)}_n \in H$ such that $\lim_{n \rightarrow \infty} \varphi^{(m)}_n(z_m) = x_m$. Then by Theorem~\ref{thm:Bell2} 
 \begin{align*}
 \lim_{n \rightarrow \infty} \varphi^{(m)}_n(z) = x_m
 \end{align*}
 for any $z \in \Omega$. So we can find $n_m \rightarrow \infty$ such that 
 \begin{align*}
 \lim_{m \rightarrow \infty} \varphi^{(m)}_{n_m}(z) = x.
 \end{align*}
 Thus  $x \in \Lc(\Omega;H)$ and hence $\Lc(\Omega; H)$ is a closed subset of $\partial \Omega$
 
 Now suppose that  $N \leq \Aut(\Omega)$ normalizes $H$. If $x \in \Lc(\Omega; H)$, then there exists some  $z \in \Omega$ and a sequence $\varphi_m \in H$ such that $\lim_{m \rightarrow \infty} \varphi_m(z) = x$. Now if $n \in N$, then Theorems~\ref{thm:Bell1} and~\ref{thm:Bell2} imply that 
 \begin{align*}
 \lim_{m \rightarrow \infty} n\varphi_mn^{-1}(z) =n\left( \lim_{m \rightarrow \infty} \varphi_m(n^{-1}(z)) \right)=n(x).
 \end{align*}
 So $n x \in \Lc(\Omega; H)$ and hence $\Lc(\Omega; H)$ is $N$-invariant.
 \end{proof}

\section{Elements of the automorphism group}\label{sec:elem_auto_ft}

For bounded domains with finite type boundary we have the following analogue of the Wolff-Denjoy theorem. 

\begin{theorem}\label{thm:wolf_ft}\cite[Corollary 2.11]{BZ2017} Suppose $\Omega \subset \Cb^d$ is a bounded pseudoconvex domain with $C^\infty$ boundary and finite type. If $f: \Omega \rightarrow \Omega$ is a holomorphic map, then either 
\begin{enumerate}
\item for every $z \in \Omega$ the orbit $\{f^n(z) : n \in \Nb\}$ is relatively compact in $\Omega$, 
\item there exists a point $\ell \in \partial \Omega$ such that 
\begin{align*}
\lim_{n \rightarrow \infty} f^n(z) = \ell
\end{align*}
for all $z \in \Omega$. 
\end{enumerate}
\end{theorem}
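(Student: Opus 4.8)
The plan is to treat $\{f^n\}$ as a normal family and split according to whether the iterates escape to $\partial\Omega$. Since $\Omega$ is bounded, $\{f^n\}$ is uniformly bounded, so by Montel's theorem every subsequence has a locally uniformly convergent sub-subsequence with limit a holomorphic map $g\colon\Omega\to\overline{\Omega}$. Throughout I would use the one basic analytic input not yet recalled in the excerpt, namely that holomorphic self-maps are non-increasing for the Kobayashi metric, $K_\Omega(f^n(p),f^n(q))\le K_\Omega(p,q)$, so in particular $n\mapsto K_\Omega(f^n(p),f^n(q))$ is non-increasing. The first step is the dichotomy: either $\{f^n\}$ is \emph{compactly divergent} (for all compact $K,L\subset\Omega$, $f^n(K)\cap L=\emptyset$ for large $n$) or every orbit is relatively compact in $\Omega$. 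If some orbit has an interior cluster point, then a subsequential limit $g$ satisfies $g(\Omega)\cap\Omega\neq\emptyset$, and by the open mapping theorem (using $\Omega=\interior\overline{\Omega}$, valid for finite type boundaries) $g(\Omega)\subset\Omega$; standard iteration theory for such a family then places us in conclusion (1). Otherwise the family is compactly divergent and I aim for conclusion (2).

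In the compactly divergent case, fix a subsequence $f^{n_j}\to g$ locally uniformly. Compact divergence forces $g(\Omega)\subset\partial\Omega$, and because $\partial\Omega$ has finite type it contains no nonconstant analytic disc, so $g\equiv\xi$ is constant with $\xi\in\partial\Omega$. Thus \emph{every} subsequential limit of $\{f^n\}$ is a constant boundary map, and the theorem reduces to showing this constant is independent of the subsequence: once all subsequential limits agree at a single $\ell\in\partial\Omega$, normality upgrades this to $f^n\to\ell$ locally uniformly, which is exactly conclusion (2).

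The uniqueness of the limit point is the heart of the argument and the step I expect to be the main obstacle. Suppose for contradiction that $f^{n_j}(z_0)\to\xi$ and $f^{m_j}(z_0)\to\eta$ with $\xi\neq\eta$ and, after interlacing, $n_j<m_j$. I would join the orbit points by $(1,\kappa)$-almost-geodesics $\sigma_j$, whose endpoints converge to the distinct boundary points $\xi$ and $\eta$; Theorem~\ref{thm:visible_ft} then forces a subsequence of the $\sigma_j$ to pass through a fixed compact subset of $\Omega$. The delicate feature of this setting is that the Kobayashi metric is \emph{not} known to be complete, so Euclidean approach to $\partial\Omega$ does not by itself make Kobayashi distances blow up; consequently, exactly as in the Corollary following Theorem~\ref{thm:visible_ft}, all distance estimates must be extracted from the almost-geodesic parameter rather than from Euclidean boundary distance. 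The plan is to play the visibility conclusion against the parameter bound $K_\Omega(\sigma_j(s),\sigma_j(t))\ge\frac{1}{\lambda}\abs{s-t}-\kappa$ together with the contraction inequality $K_\Omega(f^{n_j}(z_0),f^{m_j}(z_0))\le K_\Omega(z_0,f^{m_j-n_j}(z_0))$ to produce a contradiction and rule out two distinct limits. Calibrating the choice of orbit points and almost-geodesics so that these estimates genuinely clash — rather than remaining merely consistent, which the naive pairing of far-apart orbit points allows — is precisely where the absence of completeness bites, and is the crux that the visibility machinery of Theorem~\ref{thm:visible_ft} is designed to overcome.
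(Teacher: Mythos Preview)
The paper does not prove this statement; it is quoted as \cite[Corollary 2.11]{BZ2017}, so there is no in-paper argument to compare against and I can only assess your plan on its own merits.

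Your architecture is correct and is the standard route: tautness of $\Omega$ (smoothly bounded pseudoconvex domains are hyperconvex, hence taut) gives the dichotomy between a relatively compact family $\{f^n\}$ and compact divergence; in the divergent case the finite-type hypothesis forbids analytic discs in $\partial\Omega$, forcing every subsequential limit to be a constant $\xi\in\partial\Omega$; and everything reduces to the uniqueness of $\xi$.

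Your own diagnosis of that final step is accurate, and the combination you propose does not yet yield a contradiction. Visibility does give a uniform bound on the Gromov product: writing $a_n=K_\Omega(z_0,f^n(z_0))$, one obtains
\begin{align*}
a_{n_j}+a_{m_j}-K_\Omega\big(f^{n_j}(z_0),f^{m_j}(z_0)\big)\le 2C,
\end{align*}
and together with your contraction inequality this yields $a_{n_j}+a_{m_j}\le a_{m_j-n_j}+2C$. But subadditivity of $(a_n)$ is perfectly \emph{consistent} with this for arbitrary pairs $n_j<m_j$, exactly as you warn. The missing ingredient is a selection principle: one must choose the subsequence tending to one cluster point so that $a_{n_j}$ is, in a precise sense, near-maximal among $a_0,\dots,a_{n_j}$ (the combinatorial lemma behind Karlsson's metric Wolff--Denjoy theorem). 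With that choice the Gromov-product bound forces all cluster points to coincide, and \cite{BZ2017} carries this out without assuming $(\Omega,K_\Omega)$ is proper or complete. Until that device---or an equivalent one---is supplied, the proposal is a correct outline whose decisive step remains unproved.
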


\begin{remark} Karlsson~\cite{K2005b} proved Theorem~\ref{thm:wolf_ft} with the additional assumption that the metric space $(\Omega, K_\Omega)$ is Cauchy complete. \end{remark}

Using Theorem~\ref{thm:wolf_ft} we can characterize the automorphisms of $\Omega$ by the behavior of their iterates. Suppose $\Omega \subset \Cb^d$ is a bounded pseudoconvex domain with $C^\infty$ boundary and finite type. If $\varphi \in \Aut(\Omega)$, then by Theorem~\ref{thm:wolf_ft} either $\varphi$ has relatively compact orbits $\Omega$ or there exists some point $\ell_\varphi^+ \in \partial \Omega$ such that 
\begin{align*}
\lim_{n \rightarrow \infty} \varphi^n(z) =\ell_\varphi^+
\end{align*}
 for all $z \in \Omega$. In this latter case, we call $\ell_\varphi^+$ the \emph{attracting fixed point of} $\varphi$. 
 
  \begin{definition}\label{defn:elems_ft}
Suppose $\Omega \subset \Cb^d$ is a bounded pseudoconvex domain with $C^\infty$ boundary and finite type. If $\varphi \in \Aut(\Omega)$, then:
\begin{enumerate}
\item $\varphi$ is \emph{elliptic} if every orbit of $\varphi$ in $\Omega$ is relatively compact in $\Omega$, 
\item $\varphi$ is \emph{parabolic} if $\varphi$ is not elliptic and $\ell_\varphi^+ = \ell_{\varphi^{-1}}^+$,
\item $\varphi$ is \emph{hyperbolic} if $\varphi$ is not elliptic and $\ell_\varphi^+ \neq \ell_{\varphi^{-1}}^+$. In this case we call $ \ell_{\varphi}^-:= \ell_{\varphi^{-1}}^+$  the \emph{repelling fixed point of} $\varphi$.
\end{enumerate}
\end{definition}

\begin{remark} Theorem~\ref{thm:wolf_ft} implies that every automorphism of $\Omega$ is either elliptic, hyperbolic, or parabolic. 
\end{remark}

\subsection{The dynamics} 

We have the following immediate consequences of Theorem~\ref{thm:Bell2} and the definitions.

\begin{corollary}\label{cor:NS} Suppose $\Omega \subset \Cb^d$ is a bounded pseudoconvex domain with $C^\infty$ boundary and finite type. Assume $h \in \Aut(\Omega)$ is hyperbolic.  If $U$ is a neighborhood of $\ell_h^+$ in $\overline{\Omega}$ and $V$ is a neighborhood of $\ell_h^-$ in $\overline{\Omega}$, then there exists some $N >0$ such that 
 \begin{align*}
h^n \left(\overline{\Omega} \setminus V\right) \subset U \text{ and } h^{-n}\left(\overline{\Omega} \setminus U\right) \subset V
 \end{align*}
 for all $n \geq N$.
\end{corollary}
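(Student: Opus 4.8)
The plan is to derive this north–south dynamics statement directly from Bell's convergence theorem (Theorem~\ref{thm:Bell2}) together with the compactness of $\overline{\Omega}$. The starting observation is that, since $h$ is hyperbolic it is in particular not elliptic, so by Theorem~\ref{thm:wolf_ft} and the definition of the attracting fixed point we have $h^n(z) \to \ell_h^+$ for every $z \in \Omega$. Applying the same reasoning to $h^{-1}$, which is again hyperbolic with attracting fixed point $\ell_{h^{-1}}^+ = \ell_h^-$, gives $h^{-n}(z) \to \ell_h^-$ for every $z \in \Omega$. Thus the sequence $\varphi_n := h^n$ satisfies the hypotheses of Theorem~\ref{thm:Bell2} with $x = \ell_h^+$ and $y = \ell_h^-$, and these two points are distinct precisely because $h$ is hyperbolic.

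Feeding this into Bell's theorem yields that $h^n$ converges locally uniformly on $\overline{\Omega} \setminus \{\ell_h^-\}$ to the constant $\ell_h^+$, and that $h^{-n}$ converges locally uniformly on $\overline{\Omega} \setminus \{\ell_h^+\}$ to the constant $\ell_h^-$. I would then upgrade each locally uniform statement to the desired uniform inclusion. After shrinking $V$ to an open neighborhood of $\ell_h^-$ (this only enlarges the complement $\overline{\Omega}\setminus V$, so it suffices to treat the smaller $V$), the set $\overline{\Omega}\setminus V$ is closed, hence compact since $\overline{\Omega}$ is compact, and it avoids $\ell_h^-$; thus it is a compact subset of $\overline{\Omega}\setminus\{\ell_h^-\}$, on which $h^n \to \ell_h^+$ converges uniformly. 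Since $U$ contains a ball about $\ell_h^+$, uniform convergence to this constant produces an $N$ with $h^n(\overline{\Omega}\setminus V) \subset U$ for all $n \geq N$. The second inclusion follows by the symmetric argument with $h^{-1}$ and the roles of $U,V$ interchanged, and taking the larger of the two thresholds gives a single $N$.

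The argument is essentially a formal consequence of Theorem~\ref{thm:Bell2}, so I do not expect a deep obstacle; the points requiring care are verifying that $h^{-1}$ is again hyperbolic with $\ell_{h^{-1}}^+ = \ell_h^-$ so that Bell's theorem applies with the correct limit points, and the standard but necessary passage from locally uniform convergence on $\overline{\Omega}\setminus\{\ell_h^-\}$ to genuine uniform convergence on the compact complement $\overline{\Omega}\setminus V$. The compactness of $\overline{\Omega}$, coming from the boundedness of $\Omega$, is exactly what makes this passage clean.
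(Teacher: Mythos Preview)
Your argument is correct and matches the paper's approach: the paper simply states that this corollary is an immediate consequence of Theorem~\ref{thm:Bell2} and the definitions, and your write-up just spells out that deduction in detail. The only additional care you take---shrinking $V$ to an open neighborhood so that $\overline{\Omega}\setminus V$ is compact---is exactly the routine step needed to pass from locally uniform to uniform convergence.
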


\begin{corollary}\label{cor:parabolic_dynamics} Suppose $\Omega \subset \Cb^d$ is a bounded pseudoconvex domain with $C^\infty$ boundary and finite type. Assume $u \in \Aut(\Omega)$ is parabolic.  If $U$ is a neighborhood of $\ell_u^+$ in $\overline{\Omega}$, then there exists some $N >0$ such that 
 \begin{align*}
u^n \left(\overline{\Omega} \setminus U\right) \subset U \text{ and } u^{-n}\left(\overline{\Omega} \setminus U\right) \subset U \end{align*}
 for all $n \geq N$.
\end{corollary}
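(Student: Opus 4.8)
The plan is to reduce the statement to Bell's convergence theorem (Theorem~\ref{thm:Bell2}) applied to the sequence $\varphi_n = u^n$, followed by a routine compactness argument. First I would set $\xi := \ell_u^+$. Since $u$ is parabolic, by definition $u$ is not elliptic and $\ell_u^+ = \ell_{u^{-1}}^+$, so $\xi = \ell_{u^{-1}}^+$ as well. (Note that for $\ell_{u^{-1}}^+$ to be defined one already needs $u^{-1}$ to be non-elliptic, so both $u$ and $u^{-1}$ have a well-defined attracting fixed point, and the parabolic hypothesis is precisely that these coincide at $\xi$.) Consequently, fixing any $z_0 \in \Omega$, the definition of the attracting fixed point gives $u^n(z_0) \to \xi$ and $u^{-n}(z_0) = (u^n)^{-1}(z_0) \to \xi$ as $n \to \infty$.

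Next I would apply Theorem~\ref{thm:Bell2} to the sequence $\varphi_n = u^n$ with $x = y = \xi$. The hypotheses are exactly the two convergences just established, so the theorem yields that $u^n(z)$ converges locally uniformly on $\overline{\Omega} \setminus \{\xi\}$ to $\xi$, and likewise $u^{-n}(z)$ converges locally uniformly on $\overline{\Omega} \setminus \{\xi\}$ to $\xi$. This is the crucial input: the parabolic case is exactly the one in which both excluded boundary points of Theorem~\ref{thm:Bell2} collapse to the single fixed point $\xi$, so forward and backward iterates both sweep everything toward $\xi$.

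Finally I would convert this local uniform convergence into the desired containment. Given a neighborhood $U$ of $\xi$ in $\overline{\Omega}$, the set $K := \overline{\Omega} \setminus U$ is a closed subset of the compact set $\overline{\Omega}$, hence compact, and since $\xi \in U$ we have $K \subset \overline{\Omega} \setminus \{\xi\}$. Thus the local uniform convergence on $\overline{\Omega} \setminus \{\xi\}$ restricts to genuine uniform convergence on $K$: choosing $r > 0$ with $\overline{\Omega} \cap B(\xi, r) \subset U$, there is $N_1$ so that $u^n(K) \subset \overline{\Omega} \cap B(\xi,r) \subset U$ for all $n \geq N_1$, and similarly an $N_2$ with $u^{-n}(K) \subset U$ for all $n \geq N_2$. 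Taking $N = \max\{N_1, N_2\}$ gives $u^n(\overline{\Omega} \setminus U) \subset U$ and $u^{-n}(\overline{\Omega} \setminus U) \subset U$ for all $n \geq N$, as required.

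I do not expect a serious obstacle, since the statement is flagged as an immediate consequence of Theorem~\ref{thm:Bell2}. The only point requiring care is the bookkeeping in the first step: one must observe that the parabolic hypothesis forces both $u^n(z_0)$ and $u^{-n}(z_0)$ to converge to the \emph{same} boundary point $\xi$, which is exactly what permits Theorem~\ref{thm:Bell2} to be invoked with $x = y$. The remainder is the standard passage from local uniform convergence on a punctured compact set to uniform convergence on the compact complement of a neighborhood of the puncture.
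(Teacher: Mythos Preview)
Your proposal is correct and matches the paper's approach exactly: the paper gives no written proof but states that the corollary is an immediate consequence of Theorem~\ref{thm:Bell2} and the definitions, which is precisely the argument you have spelled out.
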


\subsection{Constructing hyperbolic elements}

\begin{lemma}\label{lem:construct_hyp_ft}  Suppose $\Omega \subset \Cb^d$ is a bounded pseudoconvex domain with $C^\infty$ boundary and finite type. Assume $\phi_n \in \Aut(\Omega)$ is a sequence of automorphisms with $\phi_n(z) \rightarrow x^+$ and $\phi_n^{-1}(z) \rightarrow x^-$ for some $z \in \Omega$ and $x^+,x^- \in \partial \Omega$. If $x^+ \neq x^-$, then $\phi_n$ is hyperbolic for $n$ large. Further, $\ell_{\phi_n}^\pm \rightarrow x^\pm$. 
\end{lemma}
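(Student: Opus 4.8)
The plan is to deduce the behavior of the iterates $\phi_n^k$ (as $k\to\infty$) of a single fixed map from the purely sequential hypothesis on $\phi_n(z)$, the bridge between the two being Theorem~\ref{thm:Bell2}. First I would fix $z$ and, using $x^+\neq x^-$, choose open neighborhoods $U^+\ni x^+$ and $U^-\ni x^-$ in $\overline{\Omega}$ whose closures are disjoint and which avoid $z$, so that $z\notin \overline{U^+}\cup\overline{U^-}$. Theorem~\ref{thm:Bell2} promotes the hypotheses $\phi_n(z)\to x^+$ and $\phi_n^{-1}(z)\to x^-$ into local uniform convergence of $\phi_n$ to $x^+$ on $\overline{\Omega}\setminus\{x^-\}$ and of $\phi_n^{-1}$ to $x^-$ on $\overline{\Omega}\setminus\{x^+\}$. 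Since $\overline{\Omega}\setminus U^-$ is a compact subset of $\overline{\Omega}\setminus\{x^-\}$, there is an $N$ so that for all $n\geq N$ we have $\phi_n(\overline{\Omega}\setminus U^-)\subset U^+$ and, symmetrically, $\phi_n^{-1}(\overline{\Omega}\setminus U^+)\subset U^-$.

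Fixing such an $n$, I would extract invariance: because $\overline{U^+}\subset\overline{\Omega}\setminus U^-$, the first inclusion gives the forward invariance $\phi_n(\overline{U^+})\subset U^+$, and likewise $\phi_n^{-1}(\overline{U^-})\subset U^-$. Feeding $z$ through these invariances (starting from $z\notin U^-$ and $z\notin U^+$) then yields $\phi_n^k(z)\in U^+$ and $\phi_n^{-k}(z)\in U^-$ for every $k\geq 1$. Next I would rule out that $\phi_n$ is elliptic: if it were, then since $\Aut(\Omega)$ acts properly and the orbit of $z$ is relatively compact in $\Omega$, the cyclic group $\langle\phi_n\rangle$ would be relatively compact in $\Aut(\Omega)$; a recurrence argument (a convergent subsequence of $\{\phi_n^{m}\}_{m\geq 1}$ produces exponents $m_j\to\infty$ with $\phi_n^{m_j}\to\id$) together with continuity of evaluation from Theorem~\ref{thm:Bell1} would force $\phi_n^{m_j}(z)\to z$. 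But $\phi_n^{m_j}(z)\in U^+$ and $z\notin\overline{U^+}$, a contradiction. Hence $\phi_n$, and by the same argument $\phi_n^{-1}$, is not elliptic.

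By the Wolff--Denjoy dichotomy (Theorem~\ref{thm:wolf_ft}), the non-elliptic map $\phi_n$ has an attracting fixed point with $\phi_n^k(z)\to\ell_{\phi_n}^+$; since all these iterates lie in $\overline{U^+}$ we get $\ell_{\phi_n}^+\in\overline{U^+}$, and applying the same reasoning to $\phi_n^{-1}$ gives $\ell_{\phi_n}^-=\ell_{\phi_n^{-1}}^+\in\overline{U^-}$. As $\overline{U^+}\cap\overline{U^-}=\emptyset$, the two fixed points differ, so $\phi_n$ is hyperbolic by Definition~\ref{defn:elems_ft}. Finally, re-running the whole construction with $U^{\pm}$ shrinking to $\{x^{\pm}\}$ produces, for each prescribed radius, a threshold beyond which $\ell_{\phi_n}^{\pm}$ lies in the corresponding small neighborhood, giving $\ell_{\phi_n}^{\pm}\to x^{\pm}$.

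The main obstacle I anticipate is precisely the transfer from the sequential hypothesis to a statement about the iterates of one fixed $\phi_n$: concretely, extracting the forward- and backward-invariant neighborhoods $U^{\pm}$ from Theorem~\ref{thm:Bell2} and excluding the elliptic case. Once those two points are secured, the identification of the attracting and repelling fixed points and their separation is a direct application of the Wolff--Denjoy dichotomy.
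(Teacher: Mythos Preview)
Your proof is correct and follows essentially the same route as the paper's: use Theorem~\ref{thm:Bell2} to obtain the nesting $\phi_n(\overline{\Omega}\setminus U^-)\subset U^+$ and its dual, iterate to trap all forward and backward orbits, rule out ellipticity via a recurrence argument, and then read off the location of the attracting and repelling fixed points. The only organizational difference is that the paper eliminates the parabolic case by invoking Corollary~\ref{cor:parabolic_dynamics} and then locates $\ell_{\phi_n}^{\pm}$ via Corollary~\ref{cor:NS}, whereas you go directly through Theorem~\ref{thm:wolf_ft} to place $\ell_{\phi_n}^{\pm}$ in the disjoint sets $\overline{U^{\pm}}$, which simultaneously forces hyperbolicity; this is arguably a little cleaner but amounts to the same thing.
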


\begin{proof} 
Fix disjoint neighborhoods $U^+,U^-$ of $x^+,x^-$ in $\overline{\Omega}$. By Theorem~\ref{thm:Bell2} there exists some $N \geq 0$ such that 
\begin{align*}
\phi_n\left( \Omega \setminus U^- \right) \subset U^+ \text{ and } \phi_n^{-1}\left( \Omega \setminus U^+ \right) \subset U^-
\end{align*}
for all $n \geq N$. So 
\begin{align*}
\phi_n^m\left( \Omega \setminus U^- \right) \subset U^+ \text{ and } \phi_n^{-m}\left( \Omega \setminus U^+\right) \subset U^-
\end{align*}
for all $n \geq N$ and $m \in \Nb$. So from Corollary~\ref{cor:parabolic_dynamics}, we see that $\phi_n$ is not parabolic for $n \geq N$. Further, if $\phi_n$ is elliptic for some $n \in \Nb$, then $\{ \phi_n^m : m \in \Zb\}$ is relatively compact in $\Aut(\Omega)$. So there exists some $m_k \rightarrow \infty$ such that 
\begin{align*}
\lim_{k \rightarrow \infty} \phi_n^{m_k}(z) =z 
\end{align*}
for all $z \in \Omega$ which is impossible when $n \geq N$. So we see that $\phi_n$ is not elliptic when $n \geq N$. So $\phi_n$ must be hyperbolic for $n \geq N$.

We next show that $\ell_{\phi_n}^+ \rightarrow x^+$. Since 
\begin{align*}
\phi_n^m\left( \Omega \setminus U^- \right) \subset U^+
\end{align*}
for all $n \geq N$ and $m \in \Nb$, Corollary~\ref{cor:NS} implies that $\ell_{\phi_n}^+ \in U^+$ when $n \geq N$. Since $U^+$ was an arbitrary neighborhood of $x^+$ we then see that $\ell_{\phi_n}^+ \rightarrow x^+$.

To show that  $\ell_{\phi_n}^- \rightarrow x^-$ one simply repeats the argument above. 
\end{proof}

\begin{proposition}\label{prop:construct_hyp} Suppose $\Omega \subset \Cb^d$ is a bounded pseudoconvex domain with $C^\infty$ boundary and finite type. If $H \leq \Aut(\Omega)$ is a subgroup and $\Lc(\Omega; H)$ contains at least two points, then $H$ contains a hyperbolic element. \end{proposition}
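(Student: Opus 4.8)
The plan is to reduce everything to Lemma~\ref{lem:construct_hyp_ft}, which manufactures a hyperbolic element out of any sequence in $\Aut(\Omega)$ whose forward and backward orbits of a common base point accumulate at two \emph{distinct} boundary points. So the whole task is to produce, from the hypothesis that $\Lc(\Omega;H)$ has at least two points, a sequence in $H$ exhibiting exactly this ping-pong behavior.

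First I would fix distinct points $x,y \in \Lc(\Omega;H)$ and a base point $z_0 \in \Omega$. By definition of $\Lc(\Omega;H)$ there are sequences $g_n, h_n \in H$ and base points at which the orbits tend to $x$ and $y$ respectively. Since $\overline{\Omega}$ is compact (as $\Omega$ is bounded) I may pass to subsequences so that the four limits $g_n(z_0) \to a$, $g_n^{-1}(z_0) \to a'$, $h_n(z_0) \to b$, $h_n^{-1}(z_0) \to b'$ all exist. Applying Theorem~\ref{thm:Bell2} to each sequence shows these limits lie on $\partial\Omega$, with $a = x$ and $b = y$; Theorem~\ref{thm:Bell2} also lets me pass from the original base points to the common $z_0$, since $z_0 \in \Omega$ while the limit points lie in $\partial\Omega$.

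Next I would split into cases. If $a \neq a'$ then Lemma~\ref{lem:construct_hyp_ft} applied to $(g_n)$ already produces a hyperbolic element of $H$, and similarly if $b \neq b'$. The remaining case is $a = a' = x$ and $b = b' = y$, i.e. both sequences are ``parabolic-like''. Here I would form the composition $\phi_n := g_n h_n \in H$ and read off its dynamics from Theorem~\ref{thm:Bell2}: since $h_n(z_0) \to y$ while $g_n$ converges to $x$ locally uniformly on $\overline{\Omega} \setminus \{a'\} = \overline{\Omega}\setminus\{x\}$ and $y \neq x$, I get $\phi_n(z_0) = g_n(h_n(z_0)) \to x$; symmetrically $\phi_n^{-1}(z_0) = h_n^{-1}(g_n^{-1}(z_0)) \to y$, because $g_n^{-1}(z_0) \to x$ and $h_n^{-1}$ converges to $y$ locally uniformly off $\{y\}$. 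Thus $\phi_n(z_0) \to x$ and $\phi_n^{-1}(z_0) \to y$ with $x \neq y$, so Lemma~\ref{lem:construct_hyp_ft} yields a hyperbolic element $\phi_n \in H$ for large $n$.

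The one point requiring care—and the crux of the argument—is the composition step: to conclude $g_n(h_n(z_0)) \to x$ I cannot merely evaluate the pointwise limit of $g_n$ at the moving point $h_n(z_0)$, but must invoke the \emph{locally uniform} convergence in Theorem~\ref{thm:Bell2} together with the fact that $h_n(z_0)$ eventually lies in a fixed compact neighborhood of $y$ disjoint from $x$; the same remark governs the inverse. Everything else is routine bookkeeping with subsequences, so I expect no serious obstacle beyond correctly tracking, in each application of Theorem~\ref{thm:Bell2}, which boundary point must be avoided.
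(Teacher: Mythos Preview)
Your proposal is correct and follows essentially the same approach as the paper's proof: reduce to Lemma~\ref{lem:construct_hyp_ft}, handle the two easy cases where one of the sequences already has distinct forward/backward limits, and in the remaining ``parabolic-like'' case compose the two sequences and invoke Bell's locally uniform convergence to see that the composition has the required ping-pong dynamics. Your discussion of the composition step via locally uniform convergence is in fact more explicit than the paper's, which simply asserts the limits of $h_n(z)$ and $h_n^{-1}(z)$ without spelling out the argument.
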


\begin{proof} Suppose that $\Lc(\Omega; H)$ contains two distinct points $x,y$. Then there exists $\phi_m, \varphi_n \in H$ and $z_1,z_2 \in \Omega$ such that $\phi_m(z_1) \rightarrow x$ and $\varphi_n(z_2) \rightarrow y$. By passing to a subsequence we can suppose that $\phi_m^{-1}(z_1) \rightarrow x^-$ and $\varphi_m^{-1}(z_2) \rightarrow y^-$. Now if $x\neq x^-$, then Lemma~\ref{lem:construct_hyp_ft} implies that $\phi_m$ is hyperbolic for large $m$ and there is nothing to prove. Likewise, if $y \neq y^-$, then $\varphi_n$ is hyperbolic for large $n$. So we may assume that $x=x^-$ and $y=y^-$. 

Then by Theorem~\ref{thm:Bell2}, we see that $\phi_n(z)$ converges locally uniformly to $x$ on $\overline{\Omega}\setminus \{x\}$ and $\varphi_m^{-1}(z)$ converges locally uniformly to $y$ on $\overline{\Omega} \setminus \{y\}$. Since $x \neq y$, if  $h_n = \phi_n \varphi_n$ then $h_n(z) \rightarrow x$ and $h_n^{-1}(z) \rightarrow y$ for all $z \in \Omega$. So Lemma~\ref{lem:construct_hyp_ft} implies that $h_n$ is hyperbolic for large $n$. 

\end{proof}

\subsection{Ping-Pong}

The next proposition is not used in the proof of Theorem~\ref{thm:main_ft}, but naturally fits into the current discussion. 

\begin{proposition}\label{prop:PP1_ft} Suppose $\Omega \subset \Cb^d$ is a bounded pseudoconvex domain with $C^\infty$ boundary and finite type. If $h_1, h_2 \in \Aut(\Omega)$ are hyperbolic elements and 
\begin{align*}
\ell_{h_1}^+, \ell_{h_1}^-, \ell_{h_2}^+, \ell_{h_2}^-
\end{align*}
are all distinct, then there exists $n,m >0$ such that the elements $h_1^m, h_2^n$ generate a free group.
\end{proposition}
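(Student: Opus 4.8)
The plan is to apply the ping-pong lemma to suitable powers of $h_1$ and $h_2$, using the North-South dynamics recorded in Corollary~\ref{cor:NS}. Since the four points $\ell_{h_1}^+, \ell_{h_1}^-, \ell_{h_2}^+, \ell_{h_2}^-$ are distinct, I first fix pairwise disjoint neighborhoods $U_1^+, U_1^-, U_2^+, U_2^-$ of them in $\overline{\Omega}$, and set $A = U_1^+ \cup U_1^-$ and $B = U_2^+ \cup U_2^-$. These are disjoint nonempty open subsets of $\overline{\Omega}$, on which $\Aut(\Omega)$ acts via the boundary extension guaranteed by Theorem~\ref{thm:Bell1}.

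Next I produce the ping-pong inclusions. Applying Corollary~\ref{cor:NS} to $h_1$ with the neighborhoods $U_1^+$ of $\ell_{h_1}^+$ and $U_1^-$ of $\ell_{h_1}^-$ yields some $N_1$ with $h_1^{k}(\overline{\Omega}\setminus U_1^-)\subset U_1^+$ and $h_1^{-k}(\overline{\Omega}\setminus U_1^+)\subset U_1^-$ for all $k \geq N_1$; similarly choose $N_2$ for $h_2$. Fix $m \geq N_1$ and $n \geq N_2$. Because $B$ is disjoint from both $U_1^+$ and $U_1^-$, for any $j \neq 0$ we have, when $j>0$, that $(h_1^m)^j(B) = h_1^{mj}(B) \subset h_1^{mj}(\overline{\Omega}\setminus U_1^-) \subset U_1^+ \subset A$, and when $j<0$, that $(h_1^m)^j(B) = h_1^{-m\abs{j}}(B) \subset h_1^{-m\abs{j}}(\overline{\Omega}\setminus U_1^+)\subset U_1^- \subset A$; here I use that $m\abs{j} \geq m \geq N_1$ for every nonzero $j$. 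The symmetric argument gives $(h_2^n)^j(A)\subset B$ for all $j \neq 0$. The key point making this work is the uniformity in Corollary~\ref{cor:NS}: the single threshold $N$ handles all powers $k \geq N$ at once, so one choice of $m$ (resp. $n$) simultaneously controls every nonzero power.

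Finally I invoke the ping-pong lemma. Each $h_i$ is hyperbolic, hence of infinite order, since if $h_i^p = \id$ for some $p \geq 1$ then the orbit $h_i^{pk}(z) = z$ could not converge to $\ell_{h_i}^+ \in \partial\Omega$; thus $\langle h_1^m\rangle$ and $\langle h_2^n\rangle$ are infinite cyclic. With the disjoint sets $A, B$ and the inclusions above, the ping-pong lemma yields $\langle h_1^m, h_2^n\rangle \cong \langle h_1^m\rangle * \langle h_2^n\rangle \cong \Zb * \Zb$, a free group of rank two. There is no serious obstacle here: all of the hard dynamical input is already packaged in Corollary~\ref{cor:NS}, and the only points requiring care are checking the inclusions for negative as well as positive powers and verifying the infinite-order condition, so that the resulting free product is genuinely free of rank two rather than a product of finite cyclic groups.
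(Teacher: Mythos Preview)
Your proof is correct and follows exactly the approach indicated in the paper: the paper's own proof is a one-line reference to Corollary~\ref{cor:NS} and the ping-pong lemma, and you have simply written out the details of that argument. Your care with the negative powers and the verification that $h_1^m$ and $h_2^n$ have infinite order are precisely the points one must check, and your handling of them is fine.
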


\begin{proof}
This follows from Corollary~\ref{cor:NS} and the well known ``ping-pong lemma,'' see for instance~\cite[Section II.B]{P2000}.
\end{proof}

\subsection{Hyperbolic elements translate an almost geodesic}

In a ${ \rm CAT}(0)$ metric space a hyperbolic isometry always translates a geodesic (see for instance~\cite[Chapter II.6 Theorem 6.8]{BH1999}). We now show that a similar phenomena holds for hyperbolic automorphisms. 

\begin{proposition}\label{prop:translate_ft} Suppose $\Omega$ is a bounded pseudoconvex domain with $C^\infty$ boundary and finite type. If $h \in \Aut(\Omega)$ is hyperbolic, then there exists some $\lambda \geq 1$, $\kappa \geq 0$, $T > 0$, and an $(\lambda,\kappa)$-almost-geodesic $\gamma: \Rb \rightarrow \Omega$ such that 
\begin{align*}
h^n \gamma(t) = \gamma(t+nT)
\end{align*}
 for all $t \in \Rb$ and $n \in \Zb$. 
\end{proposition}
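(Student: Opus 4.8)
The plan is to build the axis $\gamma$ by concatenating $h$-translates of a single almost-geodesic segment, so that the exact invariance $h\gamma(t)=\gamma(t+nT)$ holds by construction. First note that since $h$ is not elliptic it has no fixed point in $\Omega$: an interior fixed point would place $h$ in the compact isotropy group of that point, making every orbit relatively compact and $h$ elliptic. Hence $p\neq h(p)$ for every $p\in\Omega$. Fix $p$ and, using \cite[Proposition 4.4]{BZ2017}, choose a $(1,\kappa_0)$-almost-geodesic $\sigma\colon[0,T]\to\Omega$ from $p$ to $h(p)$, where the parameter-length $T>0$ because the endpoints are distinct. Define $\gamma\colon\Rb\to\Omega$ by $\gamma(t+nT)=h^n(\sigma(t))$ for $t\in[0,T]$ and $n\in\Zb$; this is well defined and continuous because $h^n(\sigma(T))=h^n(h(p))=h^{n+1}(\sigma(0))$, it satisfies $h\gamma(t)=\gamma(t+T)$ and hence $h^n\gamma(t)=\gamma(t+nT)$, and $\gamma(nT)=h^n(p)$. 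Two of the three almost-geodesic conditions are now immediate: on each interval $[nT,(n+1)T]$ the curve $\gamma$ equals $h^n\circ\sigma(\,\cdot-nT)$ and $h^n$ is a Kobayashi isometry, so $k_\Omega(\gamma(t);\gamma'(t))=k_\Omega(\sigma(t-nT);\sigma'(t-nT))\le e^{\kappa_0}$ for almost every $t$, giving the infinitesimal bound; integrating it gives the upper estimate $K_\Omega(\gamma(s),\gamma(t))\le e^{\kappa_0}\abs{t-s}$.

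Everything therefore reduces to the lower metric estimate. Writing $a_n:=K_\Omega(p,h^n(p))$, a triangle-inequality comparison of an arbitrary pair $\gamma(s),\gamma(t)$ with the nearest orbit points $\gamma(nT)=h^n(p)$ (each within distance $e^{\kappa_0}T$ of $\gamma(s)$ or $\gamma(t)$) shows that the lower bound for $\gamma$ follows once the orbit map $n\mapsto h^n(p)$ is a quasi-isometric embedding of $\Zb$, i.e. $a_n\ge c\,n$ for some $c>0$. Since $h$-invariance and the triangle inequality give $a_{m+n}\le a_m+a_n$, Fekete's lemma guarantees that $\tau_\infty:=\lim_{n\to\infty}a_n/n$ exists, equals $\inf_n a_n/n$, and satisfies $a_n\ge n\tau_\infty$. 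Thus the whole proposition comes down to the single inequality $\tau_\infty>0$.

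The hard part is proving $\tau_\infty>0$, and this is where Theorem~\ref{thm:visible_ft} enters. I would reformulate visibility as a bound on Gromov products. As $m,n\to\infty$, Theorem~\ref{thm:Bell2} gives $h^{-m}(p)\to\ell_h^-$ and $h^n(p)\to\ell_h^+$, two \emph{distinct} boundary points, so by Theorem~\ref{thm:visible_ft} any $(1,\kappa_0)$-almost-geodesics joining them must pass through a fixed compact set $D\subset\Omega$ once $m,n$ are large. Evaluating the almost-geodesic inequalities at a passage point $w\in D$ yields $K_\Omega(h^{-m}p,w)+K_\Omega(w,h^np)\le K_\Omega(h^{-m}p,h^np)+3\kappa_0$, and changing the basepoint from $w$ to $p$ (at cost $\max_{w\in D}K_\Omega(p,w)$) gives a uniform bound
\begin{align*}
\tfrac12\big(a_m+a_n-a_{m+n}\big)\le M
\end{align*}
for all large $m,n$, i.e. the almost-super-additivity $a_{m+n}\ge a_m+a_n-2M$. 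Setting $\tilde a_n=a_n-2M$, this says $\tilde a$ is super-additive for large indices, so $\tilde a_{jn}\ge j\,\tilde a_n$ and hence $\tau_\infty=\lim_j \tilde a_{jn}/(jn)\ge \tilde a_n/n$ for every large $n$. If $\tau_\infty$ were $0$ this would force $a_n\le 2M$ for all large $n$, contradicting $a_n=K_\Omega(p,h^n(p))\to\infty$ (the orbit escapes to $\partial\Omega$, and on a bounded domain the Kobayashi distance to the boundary is infinite). Hence $\tau_\infty>0$.

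With $\tau_\infty>0$ the lower estimate follows with explicit constants: for $s\in[nT,(n+1)T]$ and $t\in[mT,(m+1)T]$ one obtains $K_\Omega(\gamma(s),\gamma(t))\ge\abs{m-n}\tau_\infty-2e^{\kappa_0}T\ge \frac{\tau_\infty}{T}\abs{t-s}-(\tau_\infty+2e^{\kappa_0}T)$, so $\gamma$ is a $(\lambda,\kappa)$-almost-geodesic with $\lambda=\max\{e^{\kappa_0},T/\tau_\infty\}$ and $\kappa=\tau_\infty+2e^{\kappa_0}T$. I expect the translation of Theorem~\ref{thm:visible_ft} into the bounded-Gromov-product inequality, together with the extraction of a single compact set $D$ valid for all large $m,n$, to be the only genuinely delicate points; the remaining steps are bookkeeping with the almost-geodesic constants.
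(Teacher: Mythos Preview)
Your construction of $\gamma$ by concatenating $h$-translates of an almost-geodesic segment, the verification of the infinitesimal bound and the upper distance estimate, and the reduction of the lower estimate to $\tau_\infty=\lim_n a_n/n>0$ all match the paper's proof exactly. The divergence is in how $\tau_\infty>0$ is established.

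The paper proves this as a separate lemma (Lemma~\ref{lem:trans_dist_ft}) by a direct barrier argument that does \emph{not} use visibility: one chooses a thin Euclidean annulus $C_0=\{\epsilon_1\le\|z-\ell_h^+\|\le\epsilon_2\}$ separating $\ell_h^+$ from the rest of $\overline\Omega$, observes via the elementary bound $k_\Omega(z;v)\ge\delta_0\|v\|$ (valid on any bounded domain) that crossing $C_0$ costs at least $\delta=\delta_0(\epsilon_2-\epsilon_1)$ in Kobayashi length, and then uses Theorem~\ref{thm:Bell2} to find $n_0$ with $h^{n_0}(A_0\cup C_0\cup\{z_0\})\subset A_0$. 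Any curve from $z_0$ to $h^{n_0N}(z_0)$ must then cross each of the nested barriers $C_0,h^{n_0}(C_0),\dots,h^{(N-1)n_0}(C_0)$, giving $a_{n_0N}\ge N\delta$ directly.

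Your alternative via Theorem~\ref{thm:visible_ft} and bounded Gromov products is appealing, and the derivation of the almost-super-additivity $a_{m+n}\ge a_m+a_n-2M$ is correct. But the final step has a genuine gap: you obtain ``$a_n\le 2M$ for all large $n$'' and claim a contradiction with $a_n\to\infty$, justified by ``on a bounded domain the Kobayashi distance to the boundary is infinite.'' That assertion is precisely completeness of $K_\Omega$, and the paper explicitly states (in the paragraph introducing Theorem~\ref{thm:visible_ft}) that it is \emph{unknown} whether $K_\Omega$ is Cauchy complete on every bounded pseudoconvex domain of finite type. Without completeness, $h^n(p)\to\ell_h^+\in\partial\Omega$ does not force $K_\Omega(p,h^n(p))\to\infty$, so your almost-super-additivity is consistent with $a_n$ bounded and $\tau_\infty=0$, and the argument does not close. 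The paper's barrier argument sidesteps the completeness question entirely.
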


We start the proof of the proposition with a lemma. 

\begin{lemma}\label{lem:trans_dist_ft} Suppose $\Omega$ is a bounded pseudoconvex domain with $C^\infty$ boundary and finite type. If $h \in \Aut(\Omega)$ is a hyperbolic element, then there exists some $L > 0$ such that 
\begin{align*}
\lim_{n \rightarrow \infty} \frac{K_\Omega(h^n(z), z) }{n} =L
\end{align*}
for all $z \in \Omega$. 
\end{lemma}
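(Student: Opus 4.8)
The plan is to analyze the subadditive sequence $a_n(z):=K_\Omega(h^n(z),z)$ and show it grows linearly with a rate independent of $z$. The starting point is that $a_n(z)$ is \emph{subadditive} in $n$: since $h$ acts by isometries of $(\Omega,K_\Omega)$,
\[
a_{n+m}(z)\le K_\Omega(h^{n+m}(z),h^n(z))+K_\Omega(h^n(z),z)=a_m(z)+a_n(z).
\]
By Fekete's lemma the limit $L(z):=\lim_{n\to\infty}a_n(z)/n=\inf_{n\ge1}a_n(z)/n$ exists and is finite and non-negative. Base-point independence is equally elementary: the triangle inequality and isometry invariance give $\abs{a_n(z)-a_n(w)}\le 2K_\Omega(z,w)$, so dividing by $n$ and letting $n\to\infty$ yields $L(z)=L(w)=:L$. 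This settles everything except the strict positivity $L>0$, which is the real content of the lemma.

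To obtain quantitative control I would produce an axis-like configuration using the visibility geometry. For each $n$ let $\gamma_n$ be a $(1,\kappa)$-almost-geodesic joining $h^{-n}(z)$ to $h^n(z)$; these exist by \cite[Proposition 4.4]{BZ2017}. Since $h$ is hyperbolic, $h^{-n}(z)\to\ell_h^-$ and $h^n(z)\to\ell_h^+$ with $\ell_h^-\ne\ell_h^+$, so Theorem~\ref{thm:visible_ft} applies: after passing to a subsequence there are parameters $t_n$ with $w_n:=\gamma_n(t_n)$ bounded in $K_\Omega$, say $K_\Omega(z,w_n)\le R$, where the uniform bound over $n$ is obtained in the usual way by applying the theorem to any subsequence along which the distance would diverge. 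Splitting $\gamma_n$ at $w_n$ and using that it is an almost-geodesic gives
\[
K_\Omega(h^{-n}(z),w_n)+K_\Omega(w_n,h^n(z))=K_\Omega(h^{-n}(z),h^n(z))+O(1)=a_{2n}(z)+O(1).
\]
Applying the isometries $h^{\pm n}$ and the bound $K_\Omega(z,w_n)\le R$ turns each summand into $a_n(z)+O(1)$, so that $a_{2n}(z)=2a_n(z)+O(1)$ with a constant uniform in $n$. Telescoping this doubling relation along powers of $2$ upgrades the mere existence of $L$ to the sharp asymptotic $a_n(z)=Ln+O(1)$.

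It remains to rule out $L=0$, and this is the step I expect to be the main obstacle, precisely because $(\Omega,K_\Omega)$ need not be Cauchy complete: a priori the orbit could approach $\partial\Omega$ while staying at bounded Kobayashi distance from $z$, so one cannot simply invoke properness of the metric. If $L=0$, the estimate $a_n(z)=O(1)$ forces the entire bi-infinite orbit $\{h^n(z):n\in\Zb\}$ into a fixed $K_\Omega$-ball, whence the almost-geodesics $\gamma_n$ above have \emph{uniformly bounded length} while their endpoints converge to the distinct boundary points $\ell_h^\pm$. I would derive a contradiction by combining Theorem~\ref{thm:visible_ft} with Theorem~\ref{thm:Bell2}: the visibility theorem forces an interior accumulation point $w=\lim_k w_{n_k}\in\Omega$, and since every point of $\gamma_{n_k}$ then lies within a bounded distance of $w$, one obtains $K_\Omega(w,h^{n_k}(z))=O(1)$; feeding this sequence, which converges to $\ell_h^+$ while remaining Kobayashi-bounded, back into the finite-type geometry of \cite{BZ2017} contradicts the hyperbolicity of $h$, equivalently the separation $\ell_h^+\ne\ell_h^-$.

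In short, the formal part of the argument (existence of $L$, base-point independence, and the doubling relation yielding $a_n=Ln+O(1)$) is routine, and the delicate point is the incompleteness-proof lower bound establishing $L>0$; the essential tool there is the negative-curvature behavior of the Kobayashi metric recorded in Theorem~\ref{thm:visible_ft}, used to convert a would-be bounded orbit at two distinct boundary points into a contradiction.
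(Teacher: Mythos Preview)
Your treatment of the existence of $L$ (Fekete) and its base-point independence is correct and matches the paper. The doubling relation $a_{2n}=2a_n+O(1)$ and the resulting sharp estimate $a_n=Ln+O(1)$ are nice observations that actually go a bit beyond what the paper states.

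The gap is in the positivity step. When $L=0$ you end up with a sequence $h^{n}(z)\to\ell_h^+$ in the Euclidean sense while $K_\Omega(z,h^n(z))$ stays bounded, and you then assert that ``the finite-type geometry of \cite{BZ2017}'' rules this out. But that is precisely the statement that closed Kobayashi balls are compact in $\Omega$, i.e.\ that $(\Omega,K_\Omega)$ is proper, and this is exactly the completeness issue you yourself flag as open for finite-type domains. Nothing in Theorem~\ref{thm:visible_ft} or Theorem~\ref{thm:Bell2} excludes a Kobayashi-bounded sequence from accumulating on $\partial\Omega$; your appeal to visibility in this paragraph only reproduces information you already had (every point on $\gamma_{n_k}$ is within $O(1)$ of $z$), and no contradiction is actually derived.

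The paper bypasses completeness with a barrier argument. Choose a thin Euclidean annulus $C_0=\{\epsilon_1\le\|z-\ell_h^+\|\le\epsilon_2\}$ separating $\ell_h^+$ from $\ell_h^-$; the uniform lower bound $k_\Omega(z;v)\ge\delta_0\|v\|$ from \cite[Proposition~3.5]{BZ2017} forces any curve crossing $C_0$ to pick up Kobayashi length at least $\delta:=\delta_0(\epsilon_2-\epsilon_1)$. By the north--south dynamics (Theorem~\ref{thm:Bell2}) there is $n_0$ with $h^{n_0}(A_0\cup C_0\cup\{z_0\})\subset A_0$, the component of $\overline{\Omega}\setminus C_0$ containing $\ell_h^+$; iterating gives nested annuli $C_j=h^{jn_0}(C_0)$, each costing at least $\delta$ to cross, and any curve from $z_0$ to $h^{Nn_0}(z_0)$ must cross all $N$ of them. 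Hence $K_\Omega(z_0,h^{Nn_0}(z_0))\ge N\delta$ and $L\ge\delta/n_0>0$. This argument uses only the infinitesimal lower bound and the attracting/repelling dynamics, never completeness.
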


\begin{proof} If we fix $z \in \Omega$ and let $b_n = K_\Omega(h^n(z), z)$, then $b_{m+n} \leq b_m + b_n$. So by a standard lemma (see for instance~\cite[Theorem 4.9]{W1982}) the limit 
\begin{align*}
L=\lim_{n \rightarrow \infty} \frac{K_\Omega(h^n(z), z) }{n}
\end{align*}
exists. Further the limit
\begin{align*}
L=\lim_{n \rightarrow \infty} \frac{K_\Omega(h^n(z), z) }{n}
\end{align*}
clearly does not depend on the choice of $z$. So we only need to show that the limit is positive.

For $0 < \epsilon_1 < \epsilon_2$ sufficiently small define 
\begin{align*}
C_0 = \{ z \in \overline{\Omega} : \epsilon_1 \leq \norm{ z-\ell_h^+} \leq \epsilon_2 \}.
\end{align*}
By picking $\epsilon_1, \epsilon_2$ small enough we can assume that $\overline{\Omega} \setminus C_0$ has two connected components $A_0$, $B_0$ with $\ell^+_h \in A_0$ and $\ell_h^- \in B_0$. Now by~\cite[Proposition 3.5]{BZ2017} there exists some $\delta_0 > 0$ such that $k_\Omega(z;v) \geq \delta_0 \norm{v}$ for all $x \in \Omega$ and $v \in \Cb^d$. Define $\delta: = \delta_0(\epsilon_2-\epsilon_1)$. Then, if $z_1 \in A_0 \cap \Omega$ and $z_2 \in B_0 \cap \Omega$ we have $K_\Omega(z_1, z_2) \geq \delta$. 

Now fix some $z_0 \in B_0 \cap \Omega$. By Theorem~\ref{thm:Bell2} there exists some $n_0 > 0$ such that 
\begin{align*}
h^{n_0}\left(A_0 \cup C_0 \cup \{ z_0\} \right) \subset A_0.
\end{align*}
Then for $j \in \Nb$ define 
\begin{align*}
A_j = h^{n_0j}(A_0), \ B_j = h^{n_0j}(B_0), \text{ and } C_j = h^{n_0j}(C_0).
\end{align*}
Then, by construction, if $z_1 \in A_j \cap \Omega$ and $z_2 \in B_j \cap \Omega$ we have $K_\Omega(z_1, z_2) \geq \delta$. 

Now suppose that $\sigma:[0,T] \rightarrow \Omega$ is an absolutely continuous curve from $z_0$ to $h^{n_0N}(z_0)$. Then by construction there exists $0=t_0< t_1 < t_2 < \dots < t_{N}=T$ such that 
\begin{align*}
\sigma(t_j) \in A_{j-1} \cap B_{j} \text{ for } 1 \leq j \leq N-1.
\end{align*}
So 
\begin{align*}
\ell_\Omega(\sigma) 
&= \int_0^T k_\Omega( \sigma(t); \sigma^\prime(t)) dt = \sum_{j=0}^{N-1} \int_{t_j}^{t_{j+1}} k_\Omega( \sigma(t); \sigma^\prime(t)) dt \\
& \geq \sum_{j=0}^{N-1} K_\Omega(\sigma(t_j), \sigma(t_{j+1})) \geq N\delta.
\end{align*}
Since $\sigma$ was an arbitrary absolutely continuous curve from $z_0$ to $h^{n_0N}(z_0)$,  we have 
\begin{align*}
K_\Omega( h^{n_0N}(z_0), z_0) \geq N \delta.
\end{align*}
So $L > \delta/n_0$. 

\end{proof}

\begin{proof}[Proof of Proposition~\ref{prop:translate_ft}] Fix some $z_0 \in \Omega$ and $\kappa_0 > 1$. Then let $\gamma_0:[0,T] \rightarrow \Omega$ be an $(1,\kappa_0)$-almost-geodesic joining $z_0$ to $hz_0$ (such curves exist by~\cite[Proposition 4.4]{BZ2017}). Then define a curve $\gamma : \Rb \rightarrow \Omega$ by letting 
\begin{align*}
\gamma(t) = h^m \gamma_0(t - Tm)
\end{align*}
 when $t \in [mT, (m+1)T]$ and $m \in \Zb$. Clearly $h^n \gamma(t) = \gamma(t+nT)$ for all $t \in \Rb$ and $n \in \Zb$. Further because $\gamma_0$ is a $(1,\kappa_0)$-almost-geodesic we see that $\gamma$ is absolutely continuous and 
\begin{align*}
k_\Omega( \gamma(t); \gamma^\prime(t)) \leq e^{\kappa_0}
\end{align*}
almost everywhere. Then
\begin{align*}
K_\Omega( \gamma(s), \gamma(t)) \leq \abs{ \int_s^t k_\Omega( \gamma(r); \gamma^\prime(r)) dr}  \leq e^{\kappa_0}\abs{t-s}
\end{align*}
for all $s,t \in \Rb$. By the previous lemma there exists some $\alpha,\beta > 0$ such that 
\begin{align*}
K_\Omega( \gamma(mT), \gamma(nT)) = K_\Omega( h^m(z_0), h^{n}(z_0)) = K_\Omega( h^{\abs{m-n}}(z_0), z_0) \geq \alpha \abs{m-n}-\beta
\end{align*}
for all $m, n \in \Zb$. Now if $s,t \in \Rb$ there exists $m,n \in \Zb$ such that $\abs{s-mT} \leq T/2$ and $\abs{t-nT} \leq T/2$. So 
\begin{align*}
K_\Omega( \gamma(s), \gamma(t)) & \geq K_\Omega( \gamma(mT), \gamma(nT)) - K_\Omega(\gamma(mT), s)-K_\Omega(t, \gamma(nT)) \\
&  \geq \alpha \abs{m-n} - \beta - e^{\kappa_0}T \\
& \geq \frac{\alpha}{T} \abs{t-s}-\alpha T-\beta-e^{\kappa_0}T.
\end{align*}
So $\gamma$ is an $(\lambda, \kappa)$-almost-geodesic for some $\lambda > 1$ and $\kappa > 0$. 
\end{proof}

\subsection{More on weighted homogeneous polynomial domains}

In this section we describe some consequences of S.Y. Kim's rigidity result in~\cite{K2010}.

\begin{theorem}\label{thm:SYKim2} Suppose $\Omega$ is a bounded pseudoconvex domain with $C^\infty$ boundary and finite type. If $\Aut(\Omega)$ contains a hyperbolic element, then:
\begin{enumerate}
\item $\Omega$ is biholomorphic to a weighted homogeneous polynomial domain. 
\item If $h \in \Aut(\Omega)$ is a hyperbolic element, then there exists a one-parameter group $u_t \in \Aut(\Omega)$ such that 
\begin{align*}
\left.\frac{d}{dt}\right|_{t=0} u_t(\ell^+_h) \notin T_{\ell^+_h}^{\Cb} \partial \Omega
\end{align*}
and $u_t(\ell^-_h) = \ell^-_h$. 
\item There exists a hyperbolic element in $\Aut_0(\Omega)$. 
\item If $x_1, \dots, x_N \in \partial \Omega$, then there exists a hyperbolic element $\phi \in \Aut_0(\Omega)$ such that 
\begin{align*}
\ell^+_\phi,\ell^-_\phi \notin \{ x_1, \dots, x_N\}.
\end{align*}
\item $\Aut_0(\Omega)$ acts without fixed points on $\partial \Omega$, that is if $z \in \partial \Omega$, then there exists $g \in \Aut_0(\Omega)$ with $g(z) \neq z$
\end{enumerate}
\end{theorem}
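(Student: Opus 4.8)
The plan is to derive all five parts from part (1) together with the explicit geometry of the weighted homogeneous model. Part (1) is immediate: a hyperbolic $h \in \Aut(\Omega)$ satisfies $h^n(z) \to \ell_h^+$ and $h^{-n}(z) \to \ell_h^-$ for every $z \in \Omega$, with $\ell_h^+ \neq \ell_h^-$, which is exactly the hypothesis of Theorem~\ref{thm:SYkim}; hence $\Omega$ is biholomorphic to a weighted homogeneous polynomial domain. I would fix such a biholomorphism $F \colon \Omega \to \Pc$ with $\Pc = \{(w,z) \in \Cb \times \Cb^{d-1} : \Imag(w) > p(z)\}$, and run the remaining arguments on $\Pc$, transporting them back by $F$. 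This uses two facts: $F$ conjugates $\Aut(\Omega)$ isomorphically onto $\Aut(\Pc)$ (so identity components correspond), and, by Catlin~\cite{C1987} and Bell--Ligocka~\cite{BL1980}, $F$ extends to a CR-diffeomorphism near the boundary and therefore carries complex tangent spaces to complex tangent spaces.

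Part (2) is the heart of the proof and the main obstacle. The two standard one-parameter groups of $\Pc$ are the real translations $T_s(w,z) = (w+s,z)$ and the dilations $\delta_t(w,z) = (tw, t^{1/m_1}z_1, \dots, t^{1/m_{d-1}}z_{d-1})$ with $t>0$; weighted homogeneity of $p$ shows both preserve $\Pc$. I would use the translation group. After absorbing any pluriharmonic terms of $p$ into $w$ (a standard normalization) we have $p(0)=0$ and $dp_0 = 0$, so for the defining function $\rho = p(z) - \Imag(w)$ one computes $\partial\rho|_0 = -\frac{1}{2i}\,dw$, whence $T_0^{\Cb}\partial\Pc = \{(\dot w,\dot z) : \dot w = 0\}$. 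Since $T_s(0,0) = (s,0) \in \partial\Pc$, the orbit of the origin satisfies $\frac{d}{ds}\big|_{s=0} T_s(0,0) = (1,0) \notin T_0^{\Cb}\partial\Pc$, while $T_s$ fixes the point at infinity for every $s$. The crucial input is that Kim's construction, which linearizes $h$ near its attracting fixed point, can be arranged so that $F(\ell_h^+) = 0$ and $F(\ell_h^-) = \infty$. Granting this, $u_t := F^{-1}\circ T_t \circ F$ is the desired group: it fixes $\ell_h^-$, and because $dF^{-1}$ maps $T_0^{\Cb}\partial\Pc$ onto $T_{\ell_h^+}^{\Cb}\partial\Omega$, it moves $\ell_h^+$ complex-transversally. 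I expect the genuinely technical step to be extracting the placement $F(\ell_h^+)=0$, $F(\ell_h^-)=\infty$ from the internals of Kim's linearization rather than from the bare statement of Theorem~\ref{thm:SYkim}.

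For part (3) I would transport the dilation group: set $g_s := F^{-1}\circ\delta_{e^s}\circ F \in \Aut_0(\Omega)$, a one-parameter subgroup. For $z \in \Omega$ one has $\delta_{e^n}F(z) \to \infty$ and $\delta_{e^{-n}}F(z) \to 0$ in $\overline{\Pc}$, so $g_1^n(z) \to F^{-1}(\infty)$ and $g_1^{-n}(z) \to F^{-1}(0)$; these boundary points are distinct, so $g_1$ is a hyperbolic element lying in $\Aut_0(\Omega)$.

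Parts (5) and (4) then follow formally. For (5), suppose some $x \in \partial\Omega$ is fixed by all of $\Aut_0(\Omega)$. Taking the hyperbolic $g_1 \in \Aut_0(\Omega)$ from part (3), Corollary~\ref{cor:NS} shows its only boundary fixed points are $\ell_{g_1}^{\pm}$, so $x \in \{\ell_{g_1}^+,\ell_{g_1}^-\}$; but applying part (2) to $g_1$ (respectively $g_1^{-1}$) yields a one-parameter subgroup of $\Aut_0(\Omega)$ moving $\ell_{g_1}^+$ (respectively $\ell_{g_1}^-$), a contradiction. For (4), given $x_1,\dots,x_N$, I would conjugate: for $g \in \Aut_0(\Omega)$, Theorem~\ref{thm:Bell1} gives that $gg_1g^{-1}$ is hyperbolic with fixed points $g(\ell_{g_1}^{\pm})$. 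By part (5) neither $\ell_{g_1}^+$ nor $\ell_{g_1}^-$ is a global fixed point, so each orbit $\Aut_0(\Omega)\cdot\ell_{g_1}^{\pm}$, being a connected immersed submanifold that is not a single point, is positive-dimensional; hence each set $\{g : g(\ell_{g_1}^{\pm}) = x_i\}$ is a coset of a positive-codimension stabilizer and so has measure zero in $\Aut_0(\Omega)$. Choosing $g$ outside the finite union of these sets produces a hyperbolic $h = gg_1g^{-1} \in \Aut_0(\Omega)$ with $\ell_h^{\pm} \notin \{x_1,\dots,x_N\}$.
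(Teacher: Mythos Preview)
Your overall strategy for parts (1), (4), and (5) is sound, and your measure-theoretic argument for (4) is a legitimate alternative to the paper's more constructive one. The gap is in parts (2) and (3), and it is exactly the step you flagged as ``the genuinely technical step'': you assume that the biholomorphism $F$ can be arranged so that $F(\ell_h^-)=\infty$, and you use this both to prove $u_t(\ell_h^-)=\ell_h^-$ and to identify the limits $g_1^{\pm n}(z)\to F^{-1}(\infty),F^{-1}(0)$. Kim's linearization, however, is purely local near the attracting fixed point: what one actually gets (and what the paper uses) is that $\Psi$ extends smoothly to a neighborhood of $\ell_h^+$ with $\overline\Psi(\ell_h^+)=0$ and conjugates $h$ to $\wt h(w,z)=(\mu w,Dz)$. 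There is no extension of $\Psi$ near $\ell_h^-$, and your appeal to Bell--Ligocka/Catlin does not supply one, since those smooth-extension theorems are for biholomorphisms between bounded finite type domains, whereas $\Pc$ is unbounded.

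The paper avoids this entirely by a dynamical argument. On the model, $\wt h^{\,n}\wt u_t\wt h^{-n}=\wt u_{\mu^n t}$, so after transporting one has $h^n u_t h^{-n}=u_{\mu^n t}\to\id$ in $\Aut(\Omega)$; then
\[
u_t(\ell_h^-)=u_t\Big(\lim_{n\to\infty}h^{-n}z_0\Big)=\lim_{n\to\infty}h^{-n}\big(h^nu_th^{-n}z_0\big)=\ell_h^-
\]
by Theorem~\ref{thm:Bell2}, with no boundary extension of $\Psi$ required. For part (3) the paper likewise avoids interpreting $F^{-1}(\infty)$: it observes that on the slice $\{z=0\}$ the dilation $\wt a_{n\log\mu}$ agrees with $\wt h^{\,n}$, hence $a_{n\log\mu}$ agrees with $h^n$ on $\Psi^{-1}(\{z=0\})\subset\Omega$, so $\ell_{a_t}^{\pm}=\ell_h^{\pm}$ and $a_t$ is hyperbolic in $\Aut_0(\Omega)$. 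If you insert these two arguments in place of the assumed boundary correspondence at $\ell_h^-$, the rest of your outline goes through.
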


\begin{proof} Part (1) is just Corollary 1 in~\cite{K2010}. 

Part (2) and Part (3) follow from the proof of Theorem 2 in~\cite[Section 6]{K2010}. In particular, if $h \in \Aut(\Omega)$ is hyperbolic the discussion on page 432 in~\cite{K2010} implies that there exists an weighted homogeneous polynomial domain
\begin{align*}
\Pc = \{ (w, z) \in \Cb \times \Cb^{d-1} : {\rm Im}(z) > p(z) \}
\end{align*}
and a biholomorphism $\Psi: \Omega \rightarrow \Pc$ with the following properties:
\begin{enumerate}
\item if $\wt{h} = \Psi \circ h \circ \Psi^{-1}$, then 
\begin{align*}
\wt{h}(w, z) = (\mu w, D z)
\end{align*}
for some $0 < \mu < 1$ and $D$ a diagonal complex matrix,
\item there exists a neighborhood $U$ of $\ell^+_h$ in $\partial \Omega$ where $\Psi$ extends to a smooth map $\overline{\Psi}:U \rightarrow \partial \Pc$ and $\overline{\Psi}(\ell^+_h)=0$.
\item $\overline{\Psi}$ is an infinitesimal CR-automorphism (see page 431 in~\cite{K2010}).
\end{enumerate}

Now let $\wt{u}_t: \Pc \rightarrow \Pc$ be the one-parameter group of automorphisms $\wt{u}_t(w,z) = (w+t,z)$ and let $u_t = \Psi^{-1} \circ \wt{u}_t \circ \Psi$. Using the fact that $\overline{\Psi}$ is an infinitesimal CR-automorphism we see that 
\begin{align*}
\left.\frac{d}{dt}\right|_{t=0} u_t(\ell^+_h) \notin T_{\ell^+_h}^{\Cb} \partial \Omega.
\end{align*}
Further, 
\begin{align*}
\wt{h}^n \wt{u}_t \wt{h}^{-n}(w,z) = \left(w+\mu^n t,z\right) = \wt{u}_{\mu^n t}
\end{align*}
so 
\begin{align*}
\lim_{n \rightarrow \infty} h^n u_t h^{-n} = \id
\end{align*}
in $\Aut(\Omega)$. Next fix some $z_0 \in \Omega$. Then
\begin{align*}
u_t(\ell^-_h) = u_t \left( \lim_{n \rightarrow \infty} h^{-n} z_0 \right)  = \lim_{n \rightarrow \infty} h^{-n} \left( h^n u_t  h^{-n} z_0\right) = \ell^-_h.
\end{align*}
This establishes Part (2).

We now prove Part (3). Since $p$ is a weighted homogeneous polynomial, there exist a one-parameter group of the form 
\begin{align*}
\wt{a}_t(w,z)  = (e^{t} w, A_t z)
\end{align*}
where $A_t$ is a matrix. Then let $a_t = \Psi^{-1} \circ \wt{a}_t \circ \Psi$.  Since
\begin{align*}
a_{n\log(\mu)}(\Psi^{-1}(w,0)) = h^n(\Psi^{-1}(w,0)),
\end{align*}
we see that $\ell^\pm_{a_t} = \ell^\pm_h$ when $t > 0$. So $a_t$ is hyperbolic when $t 
\neq 0$. This establishes Part (3). 

We now prove Part (4). By Part (3), there exists an hyperbolic element $g \in \Aut_0(\Omega)$. Then by Part (2), there exist two one-parameter subgroups $u_t^+, u_t^-$ such that  
 \begin{align*}
u_t^+(\ell^-_{g}) = \ell^-_{g} \text{ and } u_t^-(\ell^+_{g}) = \ell^+_{g}.
 \end{align*}
Further,  the maps
  \begin{align*}
 t \rightarrow u_t^+(\ell_{g}^+) \text{ and }  t \rightarrow u_t^-(\ell_{g}^-)
 \end{align*}
 are non-constant. By Theorem~\ref{thm:Bell1} the curves
\begin{align*}
t \rightarrow u_t^+(\ell_g^+) \text{ and } t \rightarrow u_t^-(\ell_g^-)
\end{align*}
are continuous, so we can pick $t_1,t_2$ such that 
\begin{align*}
u_{t_1}^+(\ell_{g}^+), u_{t_2}^-(\ell_{g}^-) \notin \{ x_1, \dots, x_N\}
 \end{align*}
 and 
 \begin{align*}
 u_{t_1}^+(\ell_{g}^+), u_{t_2}^-(\ell_{g}^-), \ell_h^+, \ell_h^-
 \end{align*}
 are all distinct. 
 
Now let 
\begin{align*}
g_1 = u_{t_1}^+g u_{-t_1}^+ \text{ and }  g_2 = u_{t_2}^-g u_{-t_2}^-
\end{align*}
then $\ell^+_{g_1}= u_{t_1}^+(\ell_g^+)$,  $\ell^-_{g_1}= u_{t_2}^+(\ell_{g}^-)=\ell_g^-$, $\ell^+_{g_2}= u_{t_2}^-(\ell_g^+)$, and $\ell^-_{g_2}= u_{t_2}^-(\ell_g^+)=\ell_g^+$. So
\begin{align*}
\ell^+_{g_1}, \ell^-_{g_1}, \ell^+_{g_2}, \ell^-_{g_2}
\end{align*}
are all distinct. Then let $\phi_n = g_1^n g_2^{-n}$. By applying Corollary~\ref{cor:NS} to $g_1$ and $g_2$ we see that 
\begin{align*}
\lim_{n \rightarrow \infty} \phi_n z = \ell^+_{g_1}
\end{align*}
 and 
 \begin{align*}
\lim_{n \rightarrow \infty} \phi_n^{-1} z = \ell^+_{g_2}
\end{align*}
for all $z \in \Omega$. Then Lemma~\ref{lem:construct_hyp_ft} implies that $\phi_n$ is hyperbolic for large $n$ with $\ell^+_{\phi_n} \rightarrow \ell^+_{g_1}$ and $\ell^-_{\phi_n} \rightarrow \ell^+_{g_2}$. So for $n$ large enough 
\begin{align*}
\ell_{\phi_n}^+, \ell_{\phi_n}^- \notin  \{ x_1, \dots, x_N\}.
 \end{align*}
 Further, $\phi_n \in \Aut_0(\Omega)$ since $g \in \Aut_0(\Omega)$.
 
 Finally, Part (5) follows from Part (4) and Theorem~\ref{thm:Bell2}. 

\end{proof}

\section{Solvable subgroups}\label{sec:solvable_subgp}

 In this section we establish an analogue of a result of Byers~\cite{B1976}: if $X$ is a complete simply connected Riemannian manifold with sectional curvature bounded above by a negative number and $S$ is a solvable subgroup of the isometry group of $X$, then either $S$ has a fixed point in $X$, a fixed point in the geodesic boundary of $X$, or leaves some geodesic in $X$ invariant.

For finite type domains we prove the following analogue of Byer's theorem.

\begin{theorem}\label{thm:solvable_subgroups} Suppose $\Omega$ is a bounded pseudoconvex domain with $C^\infty$ boundary and finite type. If $S \leq \Aut(\Omega)$ is a closed non-compact solvable subgroup, then either
\begin{enumerate}
\item there exists a term $S_{m+1}$ of the derived series of $S$ such that every element of $S_{m+1}$ is elliptic, $S_{m+1}$ is non-compact, and $\Lc(\Omega;S_{m+1})$ is a single point;
\item $S$ contains a hyperbolic element $h$ such that $S$ preserves the set $\{\ell^+_h, \ell^-_h\}$ and the quotient $S/\{ h^n : n \in \Zb\}$ is compact; or 
\item $S$ contains a parabolic element $u$ and $S$ fixes $\ell^+_u$.
\end{enumerate}
Further, if $N$ is a connected subgroup of $\Aut(\Omega)$ which normalizes $S$, then $N$ has a fixed point in $\partial \Omega$. 
\end{theorem}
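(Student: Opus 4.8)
The plan is to prove a trichotomy mirroring Byers' theorem, using the elliptic/parabolic/hyperbolic classification of Definition~\ref{defn:elems_ft} in place of the fixed-point types for isometries of a negatively curved space. Since $S$ is non-compact and $\Aut(\Omega)$ acts properly on the bounded domain $\Omega$, the limit set $\Lc(\Omega;S)$ is non-empty, and it is $S$-invariant by Proposition~\ref{prop:subgroups}. I would organize the argument according to whether $S$ contains a parabolic element, a hyperbolic but no parabolic element, or only elliptic elements; these three possibilities produce cases (3), (2), and (1) respectively. The recurring mechanism is that incompatible fixed-point configurations force a free subgroup, contradicting solvability.

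First suppose $S$ contains a parabolic $u$. I claim every parabolic in $S$ has the same fixed point. If $u_1,u_2 \in S$ were parabolic with $\ell_{u_1}^+ \neq \ell_{u_2}^+$, then choosing disjoint neighborhoods of these points and applying the parabolic dynamics of Corollary~\ref{cor:parabolic_dynamics} sets up a ping-pong pair $u_1^N, u_2^N$ generating a free group, contradicting solvability. Hence all parabolics share a fixed point $p$. Since conjugation preserves the type of an automorphism and sends $\ell_u^+$ to $g(\ell_u^+)$, for any $g \in S$ the element $gug^{-1}$ is parabolic with fixed point $g(p)$, forcing $g(p)=p$. Thus $S$ fixes $p = \ell_u^+$, which is case (3).

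Next suppose $S$ has no parabolic element but contains a hyperbolic $h$. By Theorem~\ref{thm:SYKim2}, $\Omega$ is biholomorphic to a weighted homogeneous polynomial domain, which I would use to obtain an explicit affine model for the stabilizer of $\ell_h^+$ and to recognize parabolic elements. The goal is to show $S$ preserves $\{\ell_h^+,\ell_h^-\}$. If some $g \in S$ moved this pair, then $h$ and $ghg^{-1}$ would be hyperbolics with distinct fixed-point pairs: if the four points are distinct, Proposition~\ref{prop:PP1_ft} produces a free group, while if they share exactly one endpoint the stabilizer of that common point is amenable (no free subgroup arises), but an appropriate commutator is parabolic, contradicting the standing no-parabolic assumption. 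Either way $S$ preserves $\{\ell_h^+,\ell_h^-\}$. Passing to the index-at-most-two subgroup $S^0$ fixing both endpoints, I would use the almost-geodesic $\gamma$ translated by $h$ (Proposition~\ref{prop:translate_ft}) together with the visibility property (Theorem~\ref{thm:visible_ft}): elements of $S^0$ with bounded translation length along $\gamma$ keep $\gamma$ in a bounded neighborhood of itself, so by properness they form a relatively compact set. This forces the translation-length homomorphism $S^0 \to \Rb$ to have discrete image and compact kernel, so that $S^0/\langle h\rangle$, and hence $S/\langle h\rangle$, is compact, giving case (2).

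Finally, if every element of $S$ is elliptic, then $\Lc(\Omega;S)$ cannot contain two points, for otherwise Proposition~\ref{prop:construct_hyp} would produce a hyperbolic element of $S$; since $S$ is non-compact, $\Lc(\Omega;S)$ is a single point, giving case (1) with $S_{m+1}$ equal to $S$ itself (or the first all-elliptic non-compact term reached by descending the derived series, should the preceding cases require such a descent). For the final statement, each case provides a canonical subset of $\partial\Omega$ of size at most two that is preserved by every $N$ normalizing $S$: for (1) and (2) these are the limit sets $\Lc(\Omega;S_{m+1})$ (with $S_{m+1}$ characteristic) and $\Lc(\Omega;S)=\{\ell_h^+,\ell_h^-\}$, invariant by Proposition~\ref{prop:subgroups}; for (3) it is $\ell_u^+$, which $N$ preserves because $N$ carries parabolics of $S$ to parabolics of $S$, and these all share the fixed point $\ell_u^+$. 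Since $N$ is connected it cannot interchange the two points of a pair, so in every case $N$ fixes a point of $\partial\Omega$. The main obstacle I anticipate is the hyperbolic case: both ruling out the shared-single-endpoint configuration and proving compactness of $S/\langle h\rangle$ appear to genuinely require the explicit weighted-homogeneous model from Theorem~\ref{thm:SYKim2} alongside properness of the action and the visibility and translation estimates.
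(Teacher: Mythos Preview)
Your overall architecture---splitting into the cases ``$S$ contains a parabolic,'' ``$S$ contains a hyperbolic but no parabolic,'' and ``every element of $S$ is elliptic''---is a legitimate alternative to the paper's organization, and your arguments for case~(3), for case~(1), for the compactness of $S/\langle h\rangle$ via Proposition~\ref{prop:translate_ft} and Theorem~\ref{thm:visible_ft}, and for the ``further'' clause are all essentially correct. (One small quibble: there is no well-defined translation-length \emph{homomorphism} $S^0\to\Rb$ here; you should argue directly, as the paper does, that for any sequence $s_n\in S$ the almost-geodesics $s_n\gamma$ share the endpoints $\ell_h^\pm$, so visibility forces $s_n\gamma(T_n)$ into a compact set and hence $s_{n_k}h^{m_k}$ subconverges.)

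The real gap is in your case~(2), in the sub-case where $h$ and $ghg^{-1}$ share exactly one boundary fixed point. You assert that ``an appropriate commutator is parabolic,'' but you do not justify this, and invoking Theorem~\ref{thm:SYKim2} does not help: that theorem gives a biholomorphism to a weighted homogeneous polynomial domain and exhibits a few specific one-parameter subgroups, but it does \emph{not} describe the full stabilizer of a boundary point, so you cannot read off the type of $[h,ghg^{-1}]$. In general that commutator fixes the shared point $p$, but it could a priori be elliptic (compact groups can fix boundary points), hyperbolic, or parabolic; only the last contradicts your standing hypothesis. The paper sidesteps this entirely by a short derived-series induction that uses solvability directly rather than through the ``no free subgroup'' consequence: choosing $h$ in the deepest term $S_m$ that still contains a non-elliptic element, one has for $s\in S_i$ that $shs^{-1}h^{-1}\in S_{i+1}$, which by induction preserves $\{\ell_h^+,\ell_h^-\}$; hence $shs^{-1}$ preserves this set, and since $shs^{-1}$ is hyperbolic with fixed points $s\ell_h^\pm$, Corollary~\ref{cor:NS} forces $\{s\ell_h^+,s\ell_h^-\}=\{\ell_h^+,\ell_h^-\}$. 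This handles all endpoint configurations uniformly, needs neither ping-pong nor Theorem~\ref{thm:SYKim2}, and is the piece you should substitute for your shared-endpoint analysis.
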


\begin{remark} It seems possible that case (1) never actually occurs. In particular, every non-compact solvable subgroup $S$ of $\Aut(\Bb_d)$ contains a hyperbolic or parabolic element, so for $\Bb_d$ case (1) never occurs. More generally, when $\Omega$ is a bounded pseudoconvex domain with finite type and $\Lc(\Omega)$ contains at least two points, then Theorem~\ref{thm:main_ft} implies that case (1) never occurs. 
\end{remark}

\begin{proof} Let $S=S_0 \geq S_1 \geq S_2 \geq \dots \geq S_M = 1$ be the derived series of $S$. Then each $S_j$ is a closed subgroup of $\Aut(\Omega)$. 

Let $m$ be the largest number such that $S_m$ contains a non-elliptic element. In the case in which every element of $S$ is elliptic, let $m=-1$. \newline

\noindent \textbf{Case A:} $S_{m+1}$ is non-compact. Since $S_{m+1}$ is closed and $\Aut(\Omega)$ acts properly on $\Omega$, the limit set $\Lc(\Omega; S_{m+1})$ is non-empty. By assumption, every element of $S_{m+1}$ is elliptic, so Lemma~\ref{lem:construct_hyp_ft} implies that $\# \Lc(\Omega;S_{m+1}) < 2$. So $\Lc(\Omega; S_{m+1})=\{x_0\}$ for some point. Thus we are in case (1) of the theorem. \newline

\noindent \textbf{Case B:} $S_{m+1}$ is compact and $S_m$ contains a hyperbolic element $h$. We first claim that $S_{m+1}$ fixes $\ell_{h}^+$ and $\ell_{h}^-$. Fix some $z_0 \in \Omega$. Then, since $S_{m+1}$ is compact, the set 
\begin{align*}
\{ s z_0 : s \in S_{m+1}\}
\end{align*}
is compact in $\Omega$. Then for $s \in S_{m+1}$ we have
\begin{align*}
s(\ell_h^\pm) = \lim_{n \rightarrow \infty} sh^nz_0 = \lim_{n \rightarrow \infty} h^n (h^{-n} sh^n) z_0 = \ell_h^\pm
\end{align*}
by Theorem~\ref{thm:Bell2} since $h^{-n}sh^n$ is in $S_{m+1}$.

Next we claim that $\{s \ell^+_{h},s \ell^-_{h}\} = \{ \ell^+_{h}, \ell^-_{h}\}$ for every $s \in S$. Suppose $s \in S_i$ for some $i \leq m$ then $shs^{-1}h^{-1} \in S_{i+1}$ so by induction 
\begin{align*}
shs^{-1}h^{-1}\{ \ell^+_{h}, \ell^-_{h}\} = \{ \ell^+_{h}, \ell^-_{h}\}.
\end{align*}
But then
\begin{align*}
shs^{-1}\{ \ell^+_{h}, \ell^-_{h}\} = \{ \ell^+_{h}, \ell^-_{h}\}.
\end{align*}
However, $shs^{-1}$ is hyperbolic with fixed points $s\ell^{\pm}_{h}$. So by Corollary~\ref{cor:NS} we must have that 
\begin{align*}
\{ s\ell^+_{h}, s\ell^-_{h}\} = \{ \ell^+_{h}, \ell^-_{h}\}.
\end{align*}

We now argue that the quotient $S/\{ h^n : n \in \Zb\}$ is compact. So suppose that $s_n \in S$ is a sequence. We claim that there exists $n_k \rightarrow \infty$ and a sequence $m_k \in\Zb$ such that $s_{n_k}h^{m_k}$ converges.  By Proposition~\ref{prop:translate_ft} there exists $\lambda \geq1$, $\kappa \geq 0$, $T >0$, and an $(\lambda, \kappa)$-almost-geodesic $\gamma : \Rb \rightarrow \Omega$ such that 
\begin{align*}
h^m\gamma(t) = \gamma(t+mT)
\end{align*}
for all $t \in \Rb$ and $m \in \Nb$. Since the set $\gamma([0,T]) \subset \Omega$ is compact, Theorem~\ref{thm:Bell2} implies that 
\begin{align*}
\lim_{t \rightarrow \pm \infty} \gamma(t)=\ell^\pm_h.
\end{align*}

Next consider the almost-geodesics $\gamma_n = s_n \gamma$. Since $S\{ \ell^+_{h}, \ell^-_{h}\} = \{ \ell^+_{h}, \ell^-_{h}\}$ we see that
\begin{align*}
\lim_{t \rightarrow \pm \infty} \gamma_n(t) = s_n\left( \lim_{t \rightarrow \pm \infty} \gamma(t)\right)  \in \{\ell^+_h, \ell^-_h\}.
\end{align*}
Then Theorem~\ref{thm:visible_ft} implies that there exists $n_k \rightarrow \infty$, $T_k \in \Rb$, and $z_0 \in \Omega$ such that $\gamma_{n_k}(T_k) \rightarrow z_0 \in \Omega$. Then there exists $t_k \in [0,T]$ and $m_k \in \Zb$ such that 
\begin{align*}
s_{n_k} h^{m_k} \gamma(t_k) = \gamma_{n_k}(T_k).
\end{align*}
Then 
\begin{align*}
\lim_{k \rightarrow \infty} s_{n_k} h^{m_k} \gamma(t_k)= z_0.
\end{align*}
 Since $\Aut(\Omega)$ acts properly on $\Omega$, we can pass to another subsequence such that $s_{n_k} h^{m_k}$ converges in $\Aut(\Omega)$. Since $s_n \in S$ was an arbitrary sequence, we then see that the quotient $S/\{ h^n : n \in \Zb\}$ is compact. Thus we are in case (2) of the theorem. \\ 
 
\noindent \textbf{Case C:} $S_{m+1}$ is compact and $S_m$ contains a parabolic element $u \in S_m$. Arguing as in Case B, one can show that $S \ell^+_u =\ell^+_u$. Thus we are in case (3) of the theorem. \newline

We now prove the ``further'' part of the proof. Let $N$ be a connected subgroup that normalizes $S$. 

First suppose that there exists a term $S_{m+1}$ of the derived series of $S$ such that every element of $S_{m+1}$ is elliptic, $S_{m+1}$ is non-compact, and $\Lc(\Omega;S_{m+1})=\{x_0\}$. If $N$ normalizes $S$, then $N$ also normalizes $S_{m+1}$. Thus $Nx_0 = x_0$ by Proposition~\ref{prop:subgroups}. 

Next suppose that $S$ contains a hyperbolic element $h$ such that $S$ preserves the set $\{\ell^+_h, \ell^-_h\}$. If $n \in N$, then $n h n^{-1}$ is hyperbolic with attracting/repelling fixed points $n\ell^\pm_h$. Since $nhn^{-1} \in S$, we also have that 
\begin{align*}
n h n^{-1}\{\ell^+_h, \ell^-_h\} = \{\ell^+_h, \ell^-_h\}.
\end{align*}
So by Corollary~\ref{cor:NS}, we must have $\{ n\ell^+_h, n\ell^-_h\} = \{\ell^+_h, \ell^-_h\}$. Since $N$ is connected, we then have $n \ell^\pm_h = \ell^\pm_h$ for all $n \in N$. 

Finally, suppose that $S$ contains a parabolic element $u$ and $S$ fixes $\ell^+_u$. Then arguing as in the previous case shows that $n \ell^+_u = \ell^+_u$ for all $n \in N$. 

\end{proof}

\section{Proof of Theorem~\ref{thm:main_ft}}\label{sec:proof_ft}

For the rest of this section, suppose that $\Omega$ is a bounded pseudoconvex domain with $C^\infty$ boundary and finite type. Further assume that $\Lc(\Omega)$ contains at least two points. 

\subsection{Constructing the group $G$}

\begin{lemma}\label{lem:non_compact} With the notation above, $\Omega$ is biholomorphic to a weighted homogeneous polynomial domain. In particular, $\Aut_0(\Omega)$ is non-compact and $\Aut_0(\Omega)$ acts without fixed points on $\partial \Omega$. \end{lemma}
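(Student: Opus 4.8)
The plan is to prove Lemma~\ref{lem:non_compact} by first establishing that $\Aut(\Omega)$ contains a hyperbolic element, and then invoking the earlier results of S.Y. Kim (packaged as Theorem~\ref{thm:SYKim2}). The hypothesis is that $\Lc(\Omega)$ contains at least two distinct points. Since $\Lc(\Omega) = \Lc(\Omega; \Aut(\Omega))$, Proposition~\ref{prop:construct_hyp} applied with $H = \Aut(\Omega)$ immediately yields a hyperbolic element $h \in \Aut(\Omega)$. This is the key input that unlocks all of Kim's rigidity machinery.

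Once a hyperbolic element exists, the conclusion $\Omega$ is biholomorphic to a weighted homogeneous polynomial domain is exactly part (1) of Theorem~\ref{thm:SYKim2}. For the ``in particular'' claim, I would note that part (3) of Theorem~\ref{thm:SYKim2} guarantees a hyperbolic element living in the identity component $\Aut_0(\Omega)$. A hyperbolic automorphism has distinct attracting and repelling fixed points and, by definition, is not elliptic, so its orbits are unbounded in $\Omega$; hence $\Aut_0(\Omega)$ cannot be compact (a compact group acts with relatively compact orbits). This establishes that $\Aut_0(\Omega)$ is non-compact.

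Finally, for the statement that $\Aut_0(\Omega)$ acts without fixed points on $\partial \Omega$, this is precisely part (5) of Theorem~\ref{thm:SYKim2}, whose hypotheses (finite type, pseudoconvex, bounded, and the existence of a hyperbolic element) are all now verified. So I would simply cite it.

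\textbf{Main obstacle.} Honestly, given the apparatus already assembled in Sections~\ref{sec:elem_auto_ft}, there is very little obstacle here: the lemma is essentially a bookkeeping step that chains together Proposition~\ref{prop:construct_hyp} (to produce a hyperbolic element from the two-point limit set) with Theorem~\ref{thm:SYKim2} parts (1), (3), and (5). The only point requiring a word of care is the deduction that non-compactness of $\Aut_0(\Omega)$ follows from the presence of a hyperbolic element \emph{in the identity component}—one must use part (3) rather than just part (1), since an arbitrary hyperbolic element need not lie in $\Aut_0(\Omega)$, and a compact identity component would force every one of its elements to be elliptic.
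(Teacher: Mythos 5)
Your proposal is correct and follows essentially the same route as the paper: produce a hyperbolic element via Proposition~\ref{prop:construct_hyp}, invoke Kim's rigidity result for the weighted homogeneous polynomial model, and then use Theorem~\ref{thm:SYKim2} (parts (3) and (5)) for non-compactness and the absence of fixed points. Your extra remark about why one needs a hyperbolic element specifically in $\Aut_0(\Omega)$ is a correct elaboration of what the paper leaves implicit.
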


\begin{proof} By Proposition~\ref{prop:construct_hyp}, $\Aut(\Omega)$ contains a hyperbolic element. Then by S.Y. Kim's rigidity result, see Theorem~\ref{thm:SYkim}, $\Omega$ is biholomorphic to a weighted homogeneous polynomial domain. Then Theorem~\ref{thm:SYKim2} implies that $\Aut_0(\Omega)$ is non-compact and acts without fixed points on $\partial \Omega$. \end{proof}

Let $G^{sol} \leq \Aut_0(\Omega)$ be the solvable radical of $\Aut_0(\Omega)$, that is let $G^{sol}$ be the maximal connected, closed, normal, solvable subgroup of $\Aut_0(\Omega)$. Notice that $G^{sol}$ is also a normal subgroup of $\Aut(\Omega)$. Next let $G^{ss} \leq \Aut_0(\Omega)$ be a semisimple subgroup such that $\Aut_0(\Omega) = G^{ss} G^{sol}$ is a Levi-Malcev decomposition of $\Aut_0(\Omega)$.

\begin{lemma} With the notation above, $G^{sol}$ is compact. In particular, $G^{ss}$ is non-compact. \end{lemma}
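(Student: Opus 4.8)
The plan is to show that the solvable radical $G^{sol}$ must be compact by ruling out the two possible non-compact outcomes of Theorem~\ref{thm:solvable_subgroups} applied to $S = G^{sol}$. The key structural fact I would exploit is that $G^{sol}$ is normalized by all of $\Aut(\Omega)$, and in particular by $\Aut_0(\Omega)$, which by Lemma~\ref{lem:non_compact} acts without fixed points on $\partial\Omega$. This is the crux: the ``further'' part of Theorem~\ref{thm:solvable_subgroups} says that any connected subgroup $N$ normalizing a non-compact solvable $S$ must have a fixed point in $\partial\Omega$. Taking $N = \Aut_0(\Omega)$, which is connected and normalizes $G^{sol}$, would then force $\Aut_0(\Omega)$ to have a fixed point in $\partial\Omega$, directly contradicting Lemma~\ref{lem:non_compact}.

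So the argument I would run is a clean proof by contradiction. First I would suppose $G^{sol}$ is non-compact. Since $G^{sol}$ is connected, closed, normal, and solvable, it is in particular a non-compact solvable subgroup of $\Aut(\Omega)$, so Theorem~\ref{thm:solvable_subgroups} applies. Then I would invoke the ``further'' clause with $N = \Aut_0(\Omega)$: because $G^{sol}$ is normal in $\Aut_0(\Omega)$ and $\Aut_0(\Omega)$ is connected, the theorem yields a point $x_0 \in \partial\Omega$ fixed by $\Aut_0(\Omega)$. This contradicts the conclusion of Lemma~\ref{lem:non_compact} that $\Aut_0(\Omega)$ acts without fixed points on $\partial\Omega$. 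Hence $G^{sol}$ is compact.

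For the final sentence, that $G^{ss}$ is non-compact, I would argue as follows. By Lemma~\ref{lem:non_compact}, $\Aut_0(\Omega)$ is non-compact. If $G^{ss}$ were compact, then since $\Aut_0(\Omega) = G^{ss}G^{sol}$ and both factors are now compact, $\Aut_0(\Omega)$ would be a product of two compact sets and hence have compact closure; being closed in $\Aut(\Omega)$, it would itself be compact, a contradiction. Therefore $G^{ss}$ is non-compact.

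The main obstacle here is essentially notational rather than mathematical: one must be careful that $\Aut_0(\Omega)$ genuinely qualifies as the connected normalizing subgroup $N$ in Theorem~\ref{thm:solvable_subgroups}, i.e.\ that $G^{sol}$ being the solvable radical of $\Aut_0(\Omega)$ really is normalized by $\Aut_0(\Omega)$ (it is, since the solvable radical is characteristic). The only genuine subtlety is confirming that the image of a product of two compact subsets under the multiplication map is compact so that the second claim follows; this is immediate from continuity of multiplication and compactness of $G^{ss} \times G^{sol}$. Everything else is a direct appeal to the already-established Theorem~\ref{thm:solvable_subgroups} and Lemma~\ref{lem:non_compact}.
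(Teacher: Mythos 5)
Your proposal is correct and follows exactly the paper's argument: assume $G^{sol}$ is non-compact, apply the ``further'' clause of Theorem~\ref{thm:solvable_subgroups} with $N=\Aut_0(\Omega)$ to produce a fixed point in $\partial\Omega$, and contradict Lemma~\ref{lem:non_compact}. The deduction that $G^{ss}$ is non-compact from $\Aut_0(\Omega)=G^{ss}G^{sol}$ being non-compact is likewise the intended (if unstated) reasoning in the paper.
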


\begin{proof} If $G^{sol}$ were non-compact, then the ``further'' part of Theorem~\ref{thm:solvable_subgroups} would imply that $\Aut_0(\Omega)$ fixes a point in $\partial \Omega$ which is impossible by the last lemma. 
\end{proof}

\begin{lemma}\label{lem:torus_ft} With the notation above, $G^{sol}$ is a torus and $G^{sol}$ is in the center of $\Aut_0(\Omega)$. \end{lemma}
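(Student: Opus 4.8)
The plan is to show that $G^{sol}$, which we already know is a compact connected solvable Lie group and hence a torus, lies in the center of $\Aut_0(\Omega)$. The strategy is to use the action on the boundary together with the rigidity information coming from Theorem~\ref{thm:solvable_subgroups} and the dynamics of hyperbolic elements. The key observation is that $G^{sol}$, being compact, consists entirely of elliptic elements (a hyperbolic or parabolic element generates an unbounded cyclic subgroup and so cannot lie in a compact group). First I would establish that $G^{sol}$ being a torus follows immediately from the fact that a compact connected abelian Lie group is a torus, and that $G^{sol}$ is abelian because any compact connected solvable Lie group is abelian (its derived subgroup is both a torus and, being a commutator subgroup of a connected solvable group, simply connected, hence trivial).

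The heart of the argument is the centrality. Here I would exploit the abundance of hyperbolic elements provided by Theorem~\ref{thm:SYKim2}. The idea is that $G^{sol}$ is normal in $\Aut_0(\Omega)$, so for any hyperbolic $h \in \Aut_0(\Omega)$ and any $s \in G^{sol}$, conjugation $h^n s h^{-n}$ stays in the compact group $G^{sol}$. Since $G^{sol}$ is compact and $s$ is elliptic with bounded orbits, one can analyze the limit of $h^n s h^{-n}$ as $n \to \infty$. Using Theorem~\ref{thm:Bell2} (the north-south dynamics of $h$) and the smoothness of the action from Theorem~\ref{thm:Bell1}, I expect that $h^n s h^{-n}$ converges and that the limit must fix the attracting fixed point $\ell_h^+$; symmetrically $h^{-n} s h^n$ relates to $\ell_h^-$. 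The plan is to deduce that $s$ itself must fix both $\ell_h^+$ and $\ell_h^-$ for every hyperbolic $h$. By Theorem~\ref{thm:SYKim2} part (4), the fixed-point pairs $\{\ell_h^+, \ell_h^-\}$ of hyperbolic elements in $\Aut_0(\Omega)$ cover enough of $\partial\Omega$ that an element fixing all of them must act trivially on the relevant part of the boundary.

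From here I would upgrade ``$s$ fixes all these boundary points'' to ``$s$ commutes with every hyperbolic $h$.'' The mechanism is that $s h s^{-1}$ is again hyperbolic with fixed points $s\ell_h^\pm = \ell_h^\pm$, and since the hyperbolic elements can be chosen inside a one-parameter group (Theorem~\ref{thm:SYKim2} parts (2),(3)) one should be able to match translation lengths and conclude $shs^{-1} = h$, at least after passing to the one-parameter subgroups through $h$. Because hyperbolic elements are plentiful in $\Aut_0(\Omega)$ and $\Aut_0(\Omega)$ is connected, the centralizer of all hyperbolic elements is a closed subgroup containing a neighborhood of the identity, hence is all of $\Aut_0(\Omega)$; this gives $s \in Z(\Aut_0(\Omega))$.

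The main obstacle I anticipate is the step that promotes ``$s$ preserves each fixed-point pair $\{\ell_h^+,\ell_h^-\}$'' to genuine commutation $sh = hs$. Fixing the same attracting and repelling points does not by itself force two hyperbolic automorphisms to commute in a general metric space, so I would need to use more structure: either the explicit linearized model of Kim near $\ell_h^+$ (where $\tilde h(w,z) = (\mu w, Dz)$ and the one-parameter groups act concretely), or the fact that the limit $\lim_{n\to\infty} h^n s h^{-n}$ lands in the compact group $G^{sol}$ yet must also, by the contraction toward $\ell_h^+$, be forced to equal $s$ because $s$ already fixes $\ell_h^\pm$ and $G^{sol}$ acts properly. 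I expect the cleanest route is to show $\lim_{n\to\infty} h^n s h^{-n} = s$ directly from properness and the convergence of the conjugates, and then argue that the conjugation action of $h$ on the compact torus $G^{sol}$ is an automorphism of finite order whose powers converge to the identity, forcing it to be trivial; triviality of this conjugation for enough $h$ yields centrality.
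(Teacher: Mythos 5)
The torus part of your proposal is fine and matches the paper. The centrality argument, however, has a genuine gap, and moreover takes a long dynamical detour where a two-line algebraic argument suffices. The gap: you never actually close the step from ``$s$ fixes $\ell_h^{\pm}$ for every hyperbolic $h$'' to ``$s$ commutes with $h$,'' and you acknowledge this yourself. Fixing the same attracting and repelling points does not force commutation (two hyperbolic automorphisms sharing an axis need not commute), and your fallback --- that $\lim_{n\to\infty} h^n s h^{-n} = s$ ``from properness'' --- is unjustified: compactness of $G^{sol}$ gives you subconvergence of the conjugates, but nothing identifies the limit as $s$. Your final sentence also overreaches: knowing that some subsequence of $\tau(h)^n$ converges to the identity in $\Aut(G^{sol})$ only gives that $\tau(h)$ has finite order, not that it is trivial (e.g.\ $-I \in \GL_n(\Zb)$ has order two), unless you first prove full convergence of $h^n s h^{-n}$ to $s$, which is exactly the unproved step.

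The paper's proof sidesteps all of this. Since $G^{sol}$ is a torus, $\Aut(G^{sol})$ is isomorphic to the discrete group $\GL_n(\Zb)$. Normality of $G^{sol}$ gives a continuous homomorphism $\tau: \Aut(\Omega) \rightarrow \Aut(G^{sol})$, $\tau(g)(h)=ghg^{-1}$, and a continuous homomorphism from the connected group $\Aut_0(\Omega)$ to a discrete group is trivial; hence $G^{sol}$ is central in $\Aut_0(\Omega)$. You brush against this idea when you mention ``the conjugation action of $h$ on the compact torus $G^{sol}$ is an automorphism,'' but the relevant input is not the dynamics of $h$ at all --- it is the connectedness of $\Aut_0(\Omega)$ together with the discreteness of the automorphism group of a torus. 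None of the boundary dynamics, Theorem~\ref{thm:SYKim2}, or Theorem~\ref{thm:Bell2} is needed for this lemma.
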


\begin{proof} First, note that $G^{sol}$ is a torus (every compact, connected, solvable Lie group it is isomorphic to a torus). Then since $G^{sol}$ is normal in $\Aut(\Omega)$, every $g \in \Aut(\Omega)$ induces an automorphism $\tau: \Aut(\Omega) \rightarrow \Aut(G^{sol})$ defined by $\tau(g)(h) = g h g^{-1}$.  Since $G^{sol}$ is a torus, $\Aut(G^{sol})$ is isomorphic to $\GL_n(\Zb)$ for some $n$. Since $\Aut_0(\Omega)$ is connected, we then see that $\Aut_0(\Omega)\leq \ker \tau$ and hence $G^{sol}$ is in the center of $\Aut_0(\Omega)$. \end{proof}

\begin{remark} Lemma~\ref{lem:torus_ft} implies that $\Aut_0(\Omega)$ is a reductive group, which immediately implies the next two lemmas. But to minimize the amount of Lie theory required we give direct proofs.  \end{remark}

\begin{lemma} With the notation above, $G^{ss}$ is a normal subgroup in $\Aut(\Omega)$. \end{lemma}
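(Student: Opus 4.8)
The plan is to show that $G^{ss}$ coincides with the commutator subgroup $[\Aut_0(\Omega),\Aut_0(\Omega)]$ and then to exploit the general fact that the commutator subgroup is invariant under every automorphism of the ambient group. Since $\Aut_0(\Omega)$ is the identity component of $\Aut(\Omega)$, it is characteristic in $\Aut(\Omega)$; in particular, conjugation by any $g \in \Aut(\Omega)$ restricts to an automorphism of $\Aut_0(\Omega)$. Once I know $G^{ss} = [\Aut_0(\Omega),\Aut_0(\Omega)]$, this conjugation must preserve $G^{ss}$, giving $g G^{ss} g^{-1} = G^{ss}$ for all $g$, which is exactly the asserted normality.

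To identify the commutator subgroup I would pass to Lie algebras. Write $\mathfrak{g}$, $\mathfrak{g}^{ss}$, $\mathfrak{g}^{sol}$ for the Lie algebras of $\Aut_0(\Omega)$, $G^{ss}$, $G^{sol}$, so that $\mathfrak{g} = \mathfrak{g}^{ss} \oplus \mathfrak{g}^{sol}$. By Lemma~\ref{lem:torus_ft}, $G^{sol}$ lies in the center of $\Aut_0(\Omega)$, so $\mathfrak{g}^{sol}$ is central in $\mathfrak{g}$; combined with the fact that $\mathfrak{g}^{ss}$ is semisimple and hence perfect (so $[\mathfrak{g}^{ss},\mathfrak{g}^{ss}] = \mathfrak{g}^{ss}$), this yields $[\mathfrak{g},\mathfrak{g}] = \mathfrak{g}^{ss}$. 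Since the commutator subgroup of a connected Lie group is the connected Lie subgroup whose Lie algebra is the derived algebra, I conclude $[\Aut_0(\Omega),\Aut_0(\Omega)] = G^{ss}$.

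Alternatively, I can argue directly at the group level: because $G^{sol}$ is central, every commutator in $\Aut_0(\Omega) = G^{ss}G^{sol}$ already lies in $[G^{ss},G^{ss}]$, and since a connected semisimple Lie group is perfect we have $[G^{ss},G^{ss}] = G^{ss}$; hence again $[\Aut_0(\Omega),\Aut_0(\Omega)] = G^{ss}$. Either way, the conclusion is then immediate: for $g \in \Aut(\Omega)$ the automorphism $h \mapsto ghg^{-1}$ of $\Aut_0(\Omega)$ sends $[\Aut_0(\Omega),\Aut_0(\Omega)]$ onto itself, so $g G^{ss} g^{-1} = G^{ss}$, and $G^{ss}$ is normal (indeed characteristic) in $\Aut(\Omega)$.

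The only delicate point is the identification $G^{ss} = [\Aut_0(\Omega),\Aut_0(\Omega)]$: one must use the centrality of $G^{sol}$ from Lemma~\ref{lem:torus_ft} to ensure that no solvable directions survive into the derived group, and invoke perfectness of the semisimple factor at the group (not merely Lie-algebra) level. Both steps are standard, so I expect the argument to be short, and notably it does not require knowing whether $G^{ss}$ is closed.
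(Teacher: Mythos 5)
Your argument is correct, but it takes a different route from the paper. The paper proves normality via the conjugacy theorem for Levi factors: for $g\in\Aut(\Omega)$ it observes that $gG^{ss}g^{-1}$ is again a Levi factor of $\Aut_0(\Omega)$ (using normality of $G^{sol}$ and $\Aut_0(\Omega)$), invokes the Malcev conjugacy theorem to write $gG^{ss}g^{-1}=hG^{ss}h^{-1}$ with $h\in\Aut_0(\Omega)$, and then uses the centrality of $G^{sol}$ from Lemma~\ref{lem:torus_ft} to reduce $h$ to an element of $G^{ss}$. You instead identify $G^{ss}$ with the commutator subgroup $[\Aut_0(\Omega),\Aut_0(\Omega)]$ and conclude that it is characteristic. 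Both proofs lean on the same input --- the centrality of $G^{sol}$ --- but yours replaces the Levi conjugacy theorem with the perfectness of connected semisimple Lie groups, which is arguably more elementary, and it yields the slightly stronger conclusion that $G^{ss}$ is characteristic (and in particular canonically determined, independent of the choice of Levi factor). One small caution: the general statement that the commutator subgroup of a connected Lie group is the analytic subgroup with Lie algebra $[\gL,\gL]$ requires some care when the group is not simply connected, so your direct group-level argument (every commutator in $G^{ss}G^{sol}$ equals a commutator in $G^{ss}$ because $G^{sol}$ is central, and $[G^{ss},G^{ss}]=G^{ss}$ by perfectness) is the preferable version and is complete as stated.
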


\begin{proof} If $g \in \Aut(\Omega)$, then 
\begin{align*}
gG^{ss}g^{-1}G^{sol} = gG^{ss}g^{-1}gG^{sol}g^{-1}=gG^{ss} G^{sol} g^{-1} = g \Aut_0(\Omega) g^{-1} = \Aut_0(\Omega)
\end{align*}
since $G^{sol}$ and $\Aut_0(\Omega)$ are normal subgroups of $\Aut(\Omega)$. So $gG^{ss}g^{-1}$ is a Levi factor of $\Aut_0(\Omega)$. Since every two Levi factors are conjugate (see~\cite[Chapter 6, Theorem 3]{OV1990}), there exists some $h \in \Aut_0(\Omega)$ such that $hG^{ss}h^{-1} = gG^{ss} g^{-1}$. But then $h=h_1s$ for $h_1 \in G^{ss}$ and $s \in G^{sol}$. Then since $G^{sol}$ is in the center of $\Aut_0(\Omega)$, we see that 
\begin{equation*}
gG^{ss} g^{-1} = hG^{ss} h^{-1} = h_1G^{ss}h_1^{-1}=G^{ss}. \qedhere
\end{equation*}
\end{proof}

As in Section~\ref{sec:basic_properties} we can write $G^{ss}$ as an almost direct product $G_1,\dots, G_m$ where each $G_i$ is a closed simple Lie subgroup of $G^{ss}$. Then define 
\begin{align*}
G:=\prod\{ G_i : G_i \text{ is non-compact}\}.
\end{align*}

\begin{lemma} With the notation above, $G$ is a normal subgroup of $\Aut(\Omega)$. \end{lemma}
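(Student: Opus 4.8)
The plan is to fix $g \in \Aut(\Omega)$ and analyze the inner automorphism $c_g\colon h \mapsto ghg^{-1}$. Since the previous lemma shows that $G^{ss}$ is normal in $\Aut(\Omega)$, the map $c_g$ restricts to a Lie group automorphism of $G^{ss}$, and so it suffices to prove $c_g(G) = G$ for every $g \in \Aut(\Omega)$.

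I would then pass to Lie algebras. Write $\mathfrak{g}^{ss}$ for the Lie algebra of $G^{ss}$; it decomposes as a direct sum of simple ideals $\mathfrak{g}_1 \oplus \cdots \oplus \mathfrak{g}_m$, where $\mathfrak{g}_i$ is the Lie algebra of $G_i$. The differential $d(c_g)_e$ is an automorphism of $\mathfrak{g}^{ss}$, and a standard fact (which I would cite from the appendix, Section~\ref{sec:basic_properties}) is that any automorphism of a semisimple Lie algebra permutes its simple ideals, since those ideals are canonically determined. The extra point needed is that this permutation respects the compact/non-compact dichotomy: a simple ideal is of compact type exactly when its Killing form is negative definite, and automorphisms preserve the Killing form; equivalently, $c_g(G_i) = gG_ig^{-1}$ is conjugate, hence homeomorphic, to $G_i$, so one is compact if and only if the other is. Consequently $d(c_g)_e$ maps the sum of the non-compact simple ideals onto itself, i.e.\ it preserves $\mathfrak{g} := \bigoplus\{\mathfrak{g}_i : G_i \text{ non-compact}\}$, which is precisely the Lie algebra of $G$. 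Since $G$ is connected (a product of connected simple factors) with Lie algebra $\mathfrak{g}$, and $c_g(G)$ is a connected subgroup with Lie algebra $d(c_g)_e(\mathfrak{g}) = \mathfrak{g}$, the uniqueness of the connected subgroup attached to a given subalgebra yields $c_g(G) = G$. As $g$ was arbitrary, $G$ is normal in $\Aut(\Omega)$.

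The main obstacle is the compactness-preservation step: that an automorphism of $\mathfrak{g}^{ss}$ permutes the simple ideals is routine, but one must rule out the possibility that it sends a compact factor to a non-compact one. This is exactly where the Killing-form characterization of compact type is used (or, equivalently, the observation that $c_g$ is a homeomorphism and therefore preserves compactness of subgroups). Everything else is a formal consequence of the normality of $G^{ss}$ together with the correspondence between connected Lie subgroups and Lie subalgebras.
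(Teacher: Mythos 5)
Your proof is correct and follows essentially the same route as the paper: use normality of $G^{ss}$, decompose $\gL^{ss}$ into simple ideals, observe that $d(c_g)_e$ preserves the set of ideals and the compact/non-compact dichotomy, and conclude via the correspondence between connected subgroups and subalgebras. The only cosmetic differences are that the paper derives the ``permutes the simple ideals'' fact by hand (showing each $\pi_i \circ d(C_g)|_{\gL_j}$ is an isomorphism or zero) and invokes the Killing-form criterion for compactness, whereas you cite the permutation fact and also offer the equally valid observation that $gG_ig^{-1}$ is homeomorphic to $G_i$.
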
 

\begin{proof} Since $G^{ss}$ is a normal subgroup of $\Aut(\Omega)$, any $g \in \Aut(\Omega)$ induces an automorphism $C_g : G^{ss} \rightarrow G^{ss}$ defined by $C_g(h) = ghg^{-1}$. Next let $\gL$ be the Lie algebra of $G$. Then
\begin{align*}
\gL = \gL_1 \oplus \dots \oplus \gL_m
\end{align*}
where $\gL_i$ is the Lie subalgebra of $G_i$ (see the discussion in Section~\ref{sec:basic_properties}). Let $\pi_i : \gL \rightarrow \gL_i$ denote the natural projection. Now fix some $G_j$ and some $g \in \Aut(\Omega)$. Consider the induced map $\pi_i \circ d(C_g)|_{\gL_j} : \gL_j \rightarrow \gL_i$. Since $\gL_j$ is a simple Lie algebra,  $ \pi_i \circ d(C_g) |_{\gL_j}$ is either injective or the zero map. For the same reason, $ \pi_j \circ d(C_g^{-1}) |_{\gL_i}$ is either injective or the zero map. Thus $ \pi_i \circ d(C_g) |_{\gL_j}$ is either an isomorphism or the zero map. So 
\begin{align*}
d(C_g)(\gL_j) \subset \oplus \{ \gL_i : \gL_i \text{ is isomorphic to } \gL_j\}.
\end{align*}

However, $G_j$ is compact if and only if the Killing form of $\gL_j$ is a negative definite bilinear form~\cite[Chapter IV, Proposition 4.27]{K2002}. This implies that when $G_i$ is non-compact we have $C_g(G_i) \leq G$. So $G$ is a normal subgroup of $\Aut(\Omega)$.

\end{proof}

\begin{lemma}\label{lem:limit_sets_same_ft} With the notation above, $G$ contains a hyperbolic element and $\Lc(\Omega; G) = \Lc(\Omega)$.  \end{lemma}

\begin{proof} Since $G^{ss}$ is non-compact, $G$ is also non-compact and so $\Lc(\Omega; G)$ is non-empty. By Proposition~\ref{prop:subgroups}, $\Lc(\Omega; G)$ is closed and $\Aut(\Omega)$-invariant. By Lemma~\ref{lem:non_compact}, $\Aut(\Omega)$ has no fixed points in $\partial \Omega$, so $\Lc(\Omega;G)$ contains at least two points. So $G$ contains a hyperbolic element by Proposition~\ref{prop:construct_hyp}. 

Now fix some $x \in \Lc(\Omega)$. Then there exists $z_0 \in \Omega$ and $\varphi_n \in \Aut(\Omega)$ such that $\varphi_n(z_0) \rightarrow x$. By passing to a subsequence we can suppose that $\varphi_n^{-1}(z_0) \rightarrow y \in \partial \Omega$. Then by Theorem~\ref{thm:Bell2}, $\varphi_n(z) \rightarrow x$ for all $z \in \overline{\Omega} \setminus \{y\}$. Since $\Lc(\Omega; G)$ is not a single point, there exists some $z \in \Lc(\Omega; G)$ such that $z \neq y$. Then $\varphi_n(z) \rightarrow x$. By Lemma~\ref{prop:subgroups},  $\Lc(\Omega; G)$ is closed and $\Aut(\Omega)$-invariant, so we see that $x \in \Lc(\Omega; G)$. Since $x \in \Lc(\Omega)$ was arbitrary, we see that $\Lc(\Omega; G) = \Lc(\Omega)$. 
\end{proof}

\begin{lemma}\label{lem:G_has_no_fixed_pts} With the notation above, $G$ acts without fixed points on $\partial \Omega$. \end{lemma}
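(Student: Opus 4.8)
The plan is to leverage Lemma~\ref{lem:limit_sets_same_ft}, which tells us $\Lc(\Omega;G)=\Lc(\Omega)$ contains at least two points, together with the normality of $G$ in $\Aut(\Omega)$. Suppose toward a contradiction that $G$ fixes a point $x_0 \in \partial\Omega$. The first observation is that the fixed-point set of a normal subgroup ought to be invariant under the ambient group: since $G$ is normal in $\Aut(\Omega)$, for any $g \in \Aut(\Omega)$ and any $h \in G$ we have $g^{-1}hg \in G$, so $h(g(x_0)) = g\big((g^{-1}hg)(x_0)\big) = g(x_0)$, meaning $g(x_0)$ is also fixed by every element of $G$. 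Thus the set $\Fix(G) \subset \partial\Omega$ of common fixed points of $G$ is $\Aut(\Omega)$-invariant.

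Next I would exploit the fact that $G$ contains a hyperbolic element $h$, guaranteed by Lemma~\ref{lem:limit_sets_same_ft}. If $G$ fixes a point $x_0$, then in particular $h$ fixes $x_0$; but a hyperbolic element has exactly its attracting/repelling dynamics governed by $\ell_h^+$ and $\ell_h^-$, and by Corollary~\ref{cor:NS} these are the only points of $\partial\Omega$ that can be fixed by $h$ (any point $x_0 \notin \{\ell_h^+,\ell_h^-\}$ gets pushed by $h^n$ into an arbitrarily small neighborhood of $\ell_h^+$, so it cannot be fixed). Hence $\Fix(G) \subseteq \{\ell_h^+, \ell_h^-\}$, so $\Fix(G)$ has at most two points. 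Combined with the previous paragraph, $\Fix(G)$ is a nonempty, $\Aut(\Omega)$-invariant subset of $\partial\Omega$ with at most two points.

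The contradiction then comes from Theorem~\ref{thm:SYKim2}, part (4): since $\Aut(\Omega)$ contains a hyperbolic element (by Lemma~\ref{lem:limit_sets_same_ft}, as $G \leq \Aut(\Omega)$ does), that theorem supplies, for any finite list of boundary points, a hyperbolic element $g \in \Aut_0(\Omega)$ whose fixed points avoid that list. Applying this with the list equal to $\Fix(G)$ (at most two points), we obtain $g \in \Aut(\Omega)$ with $g(\Fix(G)) \cap \Fix(G) = \emptyset$ unless $\Fix(G)$ is empty—more precisely, since $\Fix(G)$ is $\Aut(\Omega)$-invariant, every $g \in \Aut(\Omega)$ must preserve it, yet an element $g$ whose own fixed points lie outside $\Fix(G)$ cannot fix the points of the at-most-two-point set $\Fix(G)$; one checks directly that a hyperbolic $g$ with $\ell_g^\pm \notin \Fix(G)$ moves any point of $\Fix(G)$ (again by Corollary~\ref{cor:NS}), violating invariance. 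This forces $\Fix(G) = \emptyset$, which is exactly the claim.

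The main obstacle I anticipate is making the invariance-versus-avoidance argument airtight: I must be careful that $\Aut(\Omega)$-invariance of $\Fix(G)$ means each $g$ permutes a set of size at most two, so high powers of a suitable $g$ fix that set pointwise, and then argue via the north–south dynamics of Corollary~\ref{cor:NS} that a hyperbolic $g$ with fixed points disjoint from a finite set $F$ cannot stabilize $F$. The cleanest route is probably to directly invoke Theorem~\ref{thm:SYKim2}(4) to produce such a $g$ and derive the contradiction, rather than reasoning about the structure of $\Fix(G)$ abstractly.
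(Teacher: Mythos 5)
Your proof is correct, but it takes a genuinely different route from the paper. The paper argues directly: given any $x_0 \in \partial\Omega$, it invokes Theorem~\ref{thm:SYKim2} to produce a hyperbolic $h \in \Aut_0(\Omega)$ with $\ell_h^+, \ell_h^-, x_0$ distinct, writes $h = gk$ with $g \in G$ and $k$ in the compact complementary factor $N_1$ of the almost direct product $\Aut_0(\Omega) = G N_1$, and uses compactness of $N_1$ together with Theorem~\ref{thm:Bell2} to show that $g$ is itself hyperbolic with $\ell_g^\pm = \ell_h^\pm$, whence $g x_0 \neq x_0$. You instead avoid the decomposition of $\Aut_0(\Omega)$ entirely: you use normality of $G$ to show the common fixed-point set of $G$ is $\Aut(\Omega)$-invariant, use the hyperbolic element of $G$ supplied by Lemma~\ref{lem:limit_sets_same_ft} and Corollary~\ref{cor:NS} to bound that set by two points, and then derive a contradiction from the north--south dynamics of a hyperbolic element (from Theorem~\ref{thm:SYKim2}(4)) whose fixed points avoid it --- being careful, as you note, to pass to $g^2$ so that the two-point set is fixed pointwise rather than merely permuted. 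Both arguments lean on Theorem~\ref{thm:SYKim2}(4) and Corollary~\ref{cor:NS}; the paper's version has the side benefit of exhibiting an explicit hyperbolic element inside $G$ with prescribed attracting and repelling points (the ``compact part'' of $h$ can be stripped off), while yours is softer and requires only normality of $G$ plus the already-established fact that $G$ contains some hyperbolic element. One small caveat: make sure you state that a hyperbolic element fixes no boundary point outside $\{\ell_h^+, \ell_h^-\}$ as a consequence of Corollary~\ref{cor:NS} (choose neighborhoods $U, V$ excluding the putative fixed point); that step is used twice and is the only place where the dynamics enter.
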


\begin{proof} 
Let 
\begin{align*}
N_0:=\prod\{ G_i : G_i \text{ is compact}\}.
\end{align*} 
Then define $N_1 := N_0 G^{sol}$. Then, by construction, $\Aut_0(\Omega)$ is the almost direct product of $G$ and $N_1$. 

Now suppose that $x_0 \in \partial \Omega$. We claim that there exists $g \in G$ such that $gx_0 \neq x_0$. By Theorem~\ref{thm:SYKim2} Part (3) there exists a hyperbolic element $h \in \Aut_0(\Omega)$ such that 
\begin{align*}
\ell^+_h, \ell^-_h, x_0
\end{align*}
are pairwise distinct. Now $h=gk$ for some $g \in G$ and $k \in N_1$. Fix some $z_0\in \Omega$. Since $N_1$ is compact the set $\{ k^n(z_0) : n \in \Zb\}$ is relatively compact in $\Omega$. So Theorem~\ref{thm:Bell2} implies that 
\begin{align*}
\lim_{n \rightarrow\pm \infty} g^n(z_0) = \lim_{n \rightarrow \pm\infty} h^n(k^{-n}z_0) = \ell^\pm_h.
\end{align*}
So $g$ is hyperbolic and $\ell^\pm_g = \ell^\pm_h$. Since $x_0 \neq \ell^-_h$, Theorem~\ref{thm:Bell2} implies that $g^nx_0 \rightarrow \ell^+_h$. So $gx_0 \neq x_0$. 
\end{proof}

\subsection{Real rank one and finite center}

In this subsection we will show that $G$ is a simple Lie group with real rank one and finite center.

Given $g \in G$, let $C(g)$ denote the centralizer of $g$ in $G$. 

\begin{lemma}\label{lem:finite_centralizers} With the notation above, if $h \in G$ is hyperbolic, then the quotient $C(h) / \{ h^n : n \in \Zb\}$ is compact. \end{lemma}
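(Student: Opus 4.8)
The plan is to reproduce, almost verbatim, the argument of Case~2 in the proof of Theorem~\ref{thm:solvable_subgroups}; the only genuinely new inputs are that the centralizer $C_G(h)$ is a closed subgroup and that it fixes both $\ell_h^+$ and $\ell_h^-$. First I would record the structural facts. The subgroup $C_G(h)$ is closed in $\Aut(\Omega)$, being the fixed-point set of the continuous conjugation map $g\mapsto ghg^{-1}h^{-1}$. The cyclic group $\{h^n : n\in\Zb\}$ is discrete and closed: since $h^n z_0\to\ell_h^\pm\in\partial\Omega$ as $n\to\pm\infty$, the orbit leaves every compact subset of $\Omega$, and properness of the action forbids any convergent subsequence of $\{h^n\}$. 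Finally $\{h^n : n\in\Zb\}$ is central in $C_G(h)$, so the coset space is a genuine group quotient and left and right cosets coincide.

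Next I would verify that every $s\in C_G(h)$ fixes $\ell_h^+$ and $\ell_h^-$. Fixing $z_0\in\Omega$, using that $s$ commutes with $h$ and extends continuously to $\overline{\Omega}$ by Theorem~\ref{thm:Bell1}, we have
\[ s(\ell_h^\pm) = s\Big(\lim_{n\to\infty} h^{\pm n} z_0\Big) = \lim_{n\to\infty} h^{\pm n}(s z_0) = \ell_h^\pm, \]
the last equality being the definition of the attracting/repelling fixed point applied to the interior point $sz_0$.

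Now the main argument. Let $\gamma:\Rb\to\Omega$ be the $(\lambda,\kappa)$-almost-geodesic from Proposition~\ref{prop:translate_ft}, so that $h^m\gamma(t)=\gamma(t+mT)$ and $k_\Omega(\gamma(t);\gamma'(t))\le\lambda e^\kappa$ almost everywhere. Since $\gamma([mT,(m+1)T])=h^m\gamma([0,T])$ and, by Theorem~\ref{thm:Bell2}, $h^m\to\ell_h^+$ uniformly on the compact set $\gamma([0,T])$ as $m\to+\infty$ (resp. $\to\ell_h^-$ as $m\to-\infty$), we get $\lim_{t\to\pm\infty}\gamma(t)=\ell_h^\pm$. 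Given an arbitrary sequence $s_n\in C_G(h)$, set $\gamma_n=s_n\gamma$; these are $(\lambda,\kappa)$-almost-geodesics and, by the previous paragraph, $\lim_{t\to\pm\infty}\gamma_n(t)=s_n(\ell_h^\pm)=\ell_h^\pm$, with $\ell_h^+\neq\ell_h^-$. Choosing $a_n\to-\infty$ and $b_n\to+\infty$ with $\gamma_n(a_n)\to\ell_h^-$ and $\gamma_n(b_n)\to\ell_h^+$ and applying Theorem~\ref{thm:visible_ft}, I obtain $n_k\to\infty$ and $t_k$ with $\gamma_{n_k}(t_k)\to w_0\in\Omega$. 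Writing $t_k=m_kT+\tau_k$ with $\tau_k\in[0,T]$ gives $\gamma_{n_k}(t_k)=s_{n_k}h^{m_k}\gamma(\tau_k)$; passing to a subsequence with $\tau_k\to\tau_\infty$ and using that the $s_{n_k}h^{m_k}$ are isometries together with $K_\Omega(\gamma(\tau_k),\gamma(\tau_\infty))\le\lambda e^\kappa|\tau_k-\tau_\infty|\to0$, I conclude $s_{n_k}h^{m_k}\gamma(\tau_\infty)\to w_0$.

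Finally, since the orbit of the interior point $\gamma(\tau_\infty)$ under $s_{n_k}h^{m_k}$ converges in $\Omega$ and $\Aut(\Omega)$ acts properly, a further subsequence of $s_{n_k}h^{m_k}$ converges in $\Aut(\Omega)$; as each term lies in the closed subgroup $C_G(h)$, so does the limit. Hence $[s_{n_k}]=[s_{n_k}h^{m_k}]$ converges in $C_G(h)/\{h^n:n\in\Zb\}$, which proves that this quotient is sequentially, and therefore, compact. The only substantive work beyond transcribing Case~2 is checking that $C_G(h)$ is closed and fixes $\ell_h^\pm$, which guarantees the limit returns to $C_G(h)$. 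The visibility step is exactly as before and is the technical heart of the argument, but it may be quoted; the main point to handle carefully is the endpoint bookkeeping when applying Theorem~\ref{thm:visible_ft} to the two-sided curves $\gamma_n$.
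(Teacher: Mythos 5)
Your proof is correct and follows essentially the same route as the paper's: translate the almost-geodesic axis of $h$ by the sequence $s_n\in C_G(h)$, note that the translated curves still have endpoints $\ell_h^\pm$ because $s_n\ell_h^\pm=\ell_{s_nhs_n^{-1}}^\pm=\ell_h^\pm$, apply the visibility theorem (Theorem~\ref{thm:visible_ft}) to extract a bounded point, and use properness of the action to get convergence of $s_{n_k}h^{m_k}$. The extra bookkeeping you supply (closedness of $C_G(h)$, the limits $\lim_{t\to\pm\infty}\gamma(t)=\ell_h^\pm$, and the two-sided endpoint choice) is implicit in the paper's argument and does not change the approach.
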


\begin{proof}
Fix a sequence $g_n \in C(h)$. We claim that there exists $n_k \rightarrow \infty$ and a sequence $m_k \in\Zb$ such that $g_{n_k}h^{m_k}$ converges.

By Proposition~\ref{prop:translate_ft} there exists $\lambda \geq1$, $\kappa \geq 0$, $T >0$, and an $(\lambda, \kappa)$-almost-geodesic $\sigma : \Rb \rightarrow \Omega$ such that 
\begin{align*}
h^m\sigma(t) = \sigma(t+mT)
\end{align*}
for all $t \in \Rb$ and $m \in \Zb$. Since the set $\sigma([0,T]) \subset \Omega$ is compact, Theorem~\ref{thm:Bell2} implies that 
\begin{align*}
\lim_{t \rightarrow \pm \infty} \sigma(t)=\ell^\pm_h.
\end{align*}

Consider the almost-geodesics $\sigma_n = g_n \sigma$. Since 
\begin{align*}
g_n \ell^\pm_h = \ell^\pm_{g_nhg_n^{-1}} = \ell^\pm_h
\end{align*}
 we then have that 
\begin{align*}
\lim_{t \rightarrow \pm \infty} \sigma_n(t) = \ell^\pm_h
\end{align*}
for all $n$. 

Then Theorem~\ref{thm:visible_ft} implies that there exists $n_k \rightarrow \infty$, $T_k \in \Rb$, and $z_0 \in \Omega$ such that $\sigma_{n_k}(T_k) \rightarrow z_0 \in \Omega$. Then we can find $m_k \in \Zb$ and $t_k \in [0,T]$ such that 
\begin{align*}
\sigma_{n_k}(T_k) =  g_{n_k} h^{m_k}\sigma(t_k).
\end{align*}
Then \begin{align*}
\lim_{k \rightarrow \infty} g_{n_k} h^{m_k}\sigma(t_k) = z_0.
\end{align*}
Since $\Aut(\Omega)$ acts properly on $\Omega$, we can pass to another subsequence such that $g_{n_k} h^{m_k}$ converges in $\Aut(\Omega)$. Since $g_n$ was an arbitrary sequence in $C(h)$ we then see that $C(h) / \{ h^n : n \in \Zb\}$ is compact.
\end{proof}

\begin{lemma} With the notation above, $G$ has finite center. \end{lemma}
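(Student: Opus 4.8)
The plan is to show that an infinite center would force $G$ to fix a point of $\partial\Omega$, contradicting Lemma~\ref{lem:G_has_no_fixed_pts}. First I would record the standard fact that the center $Z(G)$ of a connected semisimple Lie group is discrete (its Lie algebra is the center of the semisimple Lie algebra of $G$, which is trivial). Hence $Z(G)$ is a discrete, and so closed, subgroup of $\Aut(\Omega)$, and it suffices to rule out the possibility that $Z(G)$ is infinite.

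Fix a hyperbolic element $h \in G$, which exists by Lemma~\ref{lem:limit_sets_same_ft}. Since $Z(G)$ commutes with $h$ we have $Z(G) \subseteq C_{G}(h)$, and by Lemma~\ref{lem:finite_centralizers} the quotient $Q := C_{G}(h)/\{h^n : n \in \Zb\}$ is a compact group (note that $\{h^n : n \in \Zb\}$ is central, hence normal, in $C_{G}(h)$, and is infinite cyclic because $h$ is non-elliptic). The heart of the argument is to extract from the infinitude of $Z(G)$ a hyperbolic element lying \emph{inside} $Z(G)$.

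I would split into two cases according to the intersection $Z(G) \cap \{h^n : n \in \Zb\}$. If this intersection is infinite, then it contains $h^j$ for some $j \neq 0$, and $h^j$ is hyperbolic with $\ell^{\pm}_{h^j} = \ell^{\pm}_h$, so $Z(G)$ already contains a hyperbolic element. Otherwise the intersection is finite, so the image of $Z(G)$ in the compact group $Q$ is infinite; I would pick $z_n \in Z(G)$ with distinct, convergent images in $Q$ and set $\zeta_n = z_{n+1}z_n^{-1} \in Z(G)$. These satisfy $\zeta_n \notin \{h^m : m \in \Zb\}$ while their images in $Q$ tend to the identity, i.e. there are integers $k_n$ with $\zeta_n h^{k_n} \to e$ in $C_{G}(h)$. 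A bounded subsequence of $(k_n)$ would give $\zeta_n \to h^{-k} \in \{h^m : m \in \Zb\}$ (using that $Z(G)$ is closed), contradicting $\zeta_n \notin \{h^m : m \in \Zb\}$; hence $\abs{k_n} \to \infty$, and after replacing $h$ by $h^{-1}$ we may assume $k_n \to +\infty$. Writing $\zeta_n = (\zeta_n h^{k_n}) h^{-k_n}$ and using the joint continuity of the action (Theorem~\ref{thm:Bell1}) together with the convergence $h^{k_n} \to \ell^+_h$ on $\overline{\Omega}\setminus\{\ell^-_h\}$ (Theorem~\ref{thm:Bell2}) and $h^{-k_n}(z_0) \to \ell_h^{-}$, I would conclude $\zeta_n(z_0) \to \ell_h^-$ and $\zeta_n^{-1}(z_0) \to \ell_h^+$ for a fixed base point $z_0$. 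Then Lemma~\ref{lem:construct_hyp_ft} shows $\zeta_n$ is hyperbolic for large $n$. In either case $Z(G)$ contains a hyperbolic element $\zeta$.

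Finally, since $\zeta$ is central in $G$, every $g \in G$ fixes $\zeta$ under conjugation, so $g$ preserves the attracting/repelling pair $\{\ell^+_\zeta, \ell^-_\zeta\}$; as $G$ is connected, Corollary~\ref{cor:NS} forces $g\ell^+_\zeta = \ell^+_\zeta$ for all $g$, i.e. $G$ fixes $\ell^+_\zeta \in \partial\Omega$. This contradicts Lemma~\ref{lem:G_has_no_fixed_pts}, so $Z(G)$ must be finite. I expect the main obstacle to be the middle step, namely converting an accumulation point in the compact quotient $Q$ into a genuine hyperbolic element of $Z(G)$, where the properness of the action and the dynamical inputs (Theorems~\ref{thm:Bell1} and~\ref{thm:Bell2}, Lemma~\ref{lem:construct_hyp_ft}) must be combined carefully; once a central hyperbolic element is in hand, the contradiction with the no-fixed-point property is immediate and parallels the ``further'' part of Theorem~\ref{thm:solvable_subgroups}.
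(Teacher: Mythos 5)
Your argument is correct, and it is worth comparing to the paper's, which consists of exactly two sentences: the center is discrete because $G$ is semisimple, and the conclusion ``follows immediately'' from Lemma~\ref{lem:finite_centralizers}. Your write-up shows that this ``immediately'' is hiding real content: discreteness of $Z(G)$ together with compactness of $C_G(h)/\{h^n : n \in \Zb\}$ does not by itself force $Z(G)$ to be finite (the image of a discrete subgroup in the compact quotient can perfectly well be infinite, even dense), so one must extract from an infinite center a central hyperbolic element and then derive a contradiction from a separate input. You supply exactly this: the dichotomy on $Z(G) \cap \{h^n : n\in\Zb\}$, the passage from accumulation in the compact quotient to the dynamical conclusion $\zeta_n(z_0) \to \ell_h^-$, $\zeta_n^{-1}(z_0)\to\ell_h^+$ via Theorems~\ref{thm:Bell1} and~\ref{thm:Bell2}, hyperbolicity of $\zeta_n$ via Lemma~\ref{lem:construct_hyp_ft}, and finally the contradiction with Lemma~\ref{lem:G_has_no_fixed_pts} through Corollary~\ref{cor:NS} and connectedness of $G$. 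All of these steps check out (in particular, $\{h^n\}$ is indeed closed, discrete and central in $C_G(h)$, so the quotient is a genuine compact group, and a convergent sequence in the closed discrete set $Z(G)$ is eventually constant). The only stylistic simplification available is to skip the $\zeta_n = z_{n+1}z_n^{-1}$ trick: writing an infinite discrete $Z(G)$ as $z_k = f_k h^{n_k}$ with $f_k$ in a fixed compact set and necessarily $\abs{n_k}\to\infty$ leads to the same central hyperbolic element directly. So your route uses the same two ingredients the paper cites but adds Lemma~\ref{lem:G_has_no_fixed_pts} as the source of the final contradiction; what it buys is a complete argument where the paper offers only an assertion.
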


\begin{proof} Since $G$ is semisimple, the center of $G$ is discrete. So this follows immediately from Lemma~\ref{lem:finite_centralizers}. \end{proof}

Fix a norm on $\gL$, the Lie algebra of $G$, and let $\norm{\cdot}$ be the associated operator norm on $\SL(\gL)$. 

\begin{lemma}\label{lem:dist_norm_est} With the notation above, if $z_0 \in \Omega$, then there exists some $\alpha \geq 1$ and $\beta \geq 0$ such that 
\begin{align*}
K_\Omega( g(z_0), z_0) \leq \alpha \log \norm{\Ad(g)} + \beta
\end{align*}
for all $g \in G$. 
\end{lemma}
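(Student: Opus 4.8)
The plan is to reduce the estimate to the abelian part of a Cartan (polar) decomposition of $G$, where both sides become essentially linear and can be compared directly. Since $G$ is a connected semisimple Lie group with finite center, it admits a maximal compact subgroup $K$ and a Cartan decomposition $G = K \exp(\aL^+) K$, where $\aL \subset \pL$ is a maximal abelian subspace and $\aL^+$ a closed positive Weyl chamber; write $g = k_1 \exp(H) k_2$ with $k_1, k_2 \in K$ and $H = \mu(g) \in \aL^+$ the Cartan projection. Because $K$ is compact, the orbit $K z_0$ is a compact subset of $\Omega$, so $C := \sup_{k \in K} K_\Omega(k z_0, z_0) < \infty$, and since $\Aut(\Omega)$ acts by isometries on $(\Omega, K_\Omega)$ the triangle inequality gives
\begin{align*}
K_\Omega(g z_0, z_0) \le K_\Omega(\exp(H) z_0, z_0) + 2C.
\end{align*}
Thus it suffices to bound $K_\Omega(\exp(H) z_0, z_0)$ by $\norm{H}$, and then bound $\norm{H}$ by $\log\norm{\Ad(g)}$.

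For the first bound I would flow along the one-parameter subgroup generated by $H$. Setting $Y = H/\norm{H}$ and $\sigma(s) = \exp(sY) z_0$ for $s \in [0, \norm{H}]$, the velocity $\sigma'(s)$ equals the value at $\sigma(s)$ of the fundamental vector field $\widehat{Y}$ of the action. Since $\exp(sY) \in \Aut(\Omega)$ is an isometry and its differential carries $\widehat{Y}(z_0)$ to $\widehat{Y}(\sigma(s))$, the Kobayashi speed $k_\Omega(\sigma(s); \sigma'(s)) = k_\Omega(z_0; \widehat{Y}(z_0))$ is \emph{constant} in $s$. Hence $\sigma$ has length $\norm{H} \cdot k_\Omega(z_0; \widehat{Y}(z_0))$, and since $Y \mapsto \widehat{Y}(z_0)$ is linear and $v \mapsto k_\Omega(z_0; v)$ is a continuous, finite function on $\Cb^d$ (as $z_0$ is an interior point), the quantity $M := \sup\{ k_\Omega(z_0; \widehat{Y}(z_0)) : Y \in \aL, \ \norm{Y} = 1\}$ is finite. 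Therefore $K_\Omega(\exp(H) z_0, z_0) \le M \norm{H}$.

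For the second bound I would compute $\Ad(\exp H) = e^{\operatorname{ad} H}$. As $H$ lies in $\aL$, the operator $\operatorname{ad} H$ is diagonalizable with real eigenvalues given by the roots $\alpha(H)$ together with $0$, so $\norm{\Ad(\exp H)}$ is comparable, up to a multiplicative constant coming from equivalence of norms, to $e^{\theta(H)}$, where $\theta$ is the root maximizing $\alpha(H)$ on the chamber, i.e. the highest root. Absorbing the compact factors again, since $\norm{\Ad(k_i)^{\pm 1}}$ is bounded on $K$, gives $\theta(H) \le \log\norm{\Ad(g)} + O(1)$. The crux is the \emph{coercivity} $\theta(H) \ge c \norm{H}$ for $H \in \aL^+$: writing the highest root as $\theta = \sum_i n_i \alpha_i$ in the simple roots with all $n_i > 0$ shows $\theta(H) > 0$ on $\aL^+ \setminus \{0\}$, and homogeneity together with compactness of the unit sphere in $\aL^+$ yields the constant $c > 0$ (for the product decomposition $G = \prod G_i$ one applies this in each simple factor and combines). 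Combining the two bounds gives $K_\Omega(g z_0, z_0) \le \tfrac{M}{c}\log\norm{\Ad(g)} + \beta$, and enlarging the constants so that $\alpha \ge 1$ and $\beta \ge 0$ finishes the proof; note that $\log\norm{\Ad(g)} \ge 0$ always, since $\Ad(g)$ has determinant of absolute value $1$.

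The main obstacle is the last step: passing from the Cartan decomposition to a genuine comparison between $\log\norm{\Ad(g)}$ and the Cartan projection $\norm{\mu(g)}$, which rests on the coercivity of the highest root on the positive chamber. Everything else — the reduction via compactness of $K$ and the constant-speed flowing computation — is elementary once the decomposition is in hand. I would keep the Lie-theoretic facts (existence of the Cartan decomposition, diagonalizability of $\operatorname{ad} H$, and positivity of the coefficients $n_i$) as black boxes, citing the appendix.
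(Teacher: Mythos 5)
Your proposal is correct and follows essentially the same route as the paper: both reduce via the $KAK$ decomposition to the abelian factor, bound $K_\Omega(\exp(H)z_0,z_0)$ linearly in $\norm{H}$, and then compare $\norm{H}$ with $\log\norm{\Ad(g)}$ using diagonalizability of $\Ad$ on $A$ and compactness of $K$. The only cosmetic differences are that the paper obtains the linear Kobayashi bound by subdividing the path $t\mapsto e^{tX}z_0$ into unit steps (using smoothness of the extended action) rather than by your constant-speed flow observation, and it gets the two-sided comparison $\tfrac{1}{\alpha_0}\norm{X}_{\aL}\leq\log\norm{\Ad(e^X)}\leq\alpha_0\norm{X}_{\aL}$ on all of $\aL$ directly from diagonalizability rather than via coercivity of the highest root on the positive chamber.
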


\begin{proof}
By Theorem~\ref{ref:iwasawa}, there exists a compact subgroup $K \leq G$ and a connected abelian subgroup $A \leq G$ such that $G = K A K$ and $\Ad(A)$ is diagonalizable in $\SL(\gL)$. Let $\aL$ be the Lie algebra of $A$. Since $A$ is abelian and connected the map $X \in \aL \rightarrow \exp(X) \in A$ is an Lie group isomorphism. Next let $\norm{ \cdot }_{\aL}$ be a norm on $\aL$. Since $\Ad(A)$ is diagonalizable in $\SL(\gL)$ there exists $\alpha_0 \geq 1$ such that 
\begin{align*}
\frac{1}{\alpha_0} \norm{X}_{\aL} \leq \log \norm{\Ad(e^X)} \leq \alpha_0 \norm{X}_{\aL}.
\end{align*}

Since the action on $\Aut(\Omega)$ on $\Omega$ is smooth, there exists an $C > 0$ such that 
\begin{align*}
K_\Omega(e^X z_0, z_0) \leq C\norm{X}_{\aL}
\end{align*}
for all $X \in \aL$ with $\norm{X}_{\aL} \leq 1$. 
Then if $X \in \aL$ let $X_0 = X/\norm{X}_{\aL}$ and $n = \lfloor \norm{X}_{\aL} \rfloor$. Then 
\begin{align*}
K_\Omega(e^X z_0, z_0) 
&\leq K_\Omega(e^{X} z_0, e^{nX_0}z_0) + K_\Omega(e^{n X_0} z_0, e^{(n-1)X_0}z_0)+ \dots + K_\Omega(e^{X_0} z_0, z_0) \\
& \leq C \norm{X-nX_0}_{\aL} + C\norm{X_0}_{\aL} + \dots + C\norm{X_0}_{\aL} = C\norm{X}_{\aL}.
\end{align*}

Further, since $K$ is compact, there exists some $R \geq 0$ such that 
\begin{align*}
K_\Omega(k(z_0), z_0) \leq R
\end{align*}
for all $k \in K$. By increasing $R$ if necessary, we can further assume that 
\begin{align*}
\log \norm{\Ad(k)} \leq R
\end{align*}
for all $k \in K$. 

Now suppose that $g \in G$. Then $g = k_1 e^X k_2$ for some $X \in \aL$ and $k_1,k_2 \in K$. Then
\begin{align*}
K_\Omega( g(z_0), z_0) 
&\leq 2R + K_\Omega(e^Xz_0,z_0) \leq 2R + C\norm{X}_{\aL} \leq 2R+\alpha_0C\log\norm{\Ad(e^X)}.
\end{align*}
Further 
\begin{align*}
\log\norm{\Ad(e^X)} \leq \log\norm{\Ad(k_1^{-1})}\norm{\Ad(g)}\norm{\Ad(k_2^{-1})} \leq 2R+\log\norm{\Ad(g)}.
\end{align*}
So
\begin{equation*}
K_\Omega( g(z_0), z_0)  \leq 2R(1+ \alpha_0C) + \alpha_0C\log \norm{\Ad(g)}. \qedhere
\end{equation*}
\end{proof}

\begin{definition} An element $g \in G$ is \emph{$\Lc$-hyperbolic} (respectively \emph{$\Lc$-elliptic}, \emph{$\Lc$-unipotent}) if $g$ is hyperbolic (respectively elliptic, unipotent)  in $G$ in the Lie group sense (see Section~\ref{sec:basic_properties}).
\end{definition}

\begin{lemma}\label{lem:double_hyp} With the notation above, there exists an element $g \in G$ which is both hyperbolic and $\Lc$-hyperbolic. \end{lemma}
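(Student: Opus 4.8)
The plan is to start from the (dynamically) hyperbolic element $h \in G$ furnished by Lemma~\ref{lem:limit_sets_same_ft} and to extract the desired element from its complete multiplicative Jordan decomposition. Writing $h = h_e h_h h_u$, with $h_e$ being $\Lc$-elliptic, $h_h$ being $\Lc$-hyperbolic, and $h_u$ being $\Lc$-unipotent, where these three factors pairwise commute (the Jordan decomposition exists since $G$ is semisimple with finite center, see Section~\ref{sec:basic_properties}), I would show that $g = h_h^2$ is the element we want. By construction $h_h$, and hence $h_h^2$, is $\Lc$-hyperbolic, so the whole content is to prove that $h_h^2$ is dynamically hyperbolic in the sense of Definition~\ref{defn:elems_ft}.

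First I would show that $h_h$ is not dynamically elliptic, by producing a linear lower bound for its orbit growth. Set $c = h_e h_u$, so that $h = c\, h_h = h_h\, c$ with $c$ and $h_h$ commuting. Since $\Ad(h_e)$ is semisimple with unit-modulus eigenvalues and $\Ad(h_u)$ is unipotent, $\norm{\Ad(c^n)}$ grows at most polynomially in $n$; hence Lemma~\ref{lem:dist_norm_est} gives that $K_\Omega(c^n z_0, z_0)$ is $O(\log n)$. Using $h^n z_0 = h_h^n(c^n z_0)$ together with the fact that $h_h^n$ is an isometry,
\[
K_\Omega(h^n z_0, z_0) \le K_\Omega(c^n z_0, z_0) + K_\Omega(h_h^n z_0, z_0),
\]
while Lemma~\ref{lem:trans_dist_ft} gives $K_\Omega(h^n z_0, z_0) \ge \tfrac{L}{2} n$ for large $n$, with $L > 0$. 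Combining these,
\[
K_\Omega(h_h^n z_0, z_0) \ge \tfrac{L}{2} n - O(\log n) \longrightarrow \infty,
\]
so $h_h$, and therefore $h_h^2$, has an unbounded orbit and is non-elliptic.

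It remains to rule out the parabolic case, and this is the step I expect to be the crux. Because $h_h$ commutes with $h$, it preserves the fixed-point set of $h$ in $\overline{\Omega}$; and that set is exactly $\{\ell^+_h, \ell^-_h\}$, since by Corollary~\ref{cor:NS} any fixed point of $h$ other than $\ell^-_h$ is attracted to $\ell^+_h$ and so must equal $\ell^+_h$, and symmetrically. Thus $h_h$ permutes the two-point set $\{\ell^+_h, \ell^-_h\}$, and consequently $h_h^2$ fixes both $\ell^+_h$ and $\ell^-_h$. On the other hand, Corollary~\ref{cor:parabolic_dynamics} forces a parabolic element to have a unique fixed point in $\overline{\Omega}$. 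Since $h_h^2$ is non-elliptic and fixes the two distinct boundary points $\ell^+_h \neq \ell^-_h$, it cannot be parabolic, so by Definition~\ref{defn:elems_ft} it is dynamically hyperbolic. Hence $g = h_h^2$ is simultaneously dynamically hyperbolic and $\Lc$-hyperbolic, as required. The main obstacle is precisely the separation of the parabolic from the hyperbolic case: the commuting-fixed-point argument is what resolves it, while the polynomial-versus-logarithmic growth estimate for $c^n$ is the mechanism that prevents the $\Lc$-elliptic and $\Lc$-unipotent parts from contributing to linear escape.
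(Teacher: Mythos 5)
Your proof is correct and follows essentially the same route as the paper: Jordan decomposition, Lemma~\ref{lem:dist_norm_est} to show the elliptic-unipotent part contributes only sublinearly to orbit growth, and a commuting-fixed-point argument to exclude the parabolic case. The only cosmetic difference is that the paper shows $h_eh_u$ fixes $\ell^{\pm}_h$ directly (via Theorem~\ref{thm:Bell2} and continuity of the action), deduces it is elliptic, and then reads off the dynamics of the $\Lc$-hyperbolic factor, whereas you run the trichotomy on $h_h$ itself and square to avoid a possible swap of the two fixed points.
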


\begin{proof}
By Lemma~\ref{lem:limit_sets_same_ft} there exists some $g \in G$ which is hyperbolic. Then by Lemma~\ref{lem:trans_dist_ft} 
\begin{align*}
\lim_{n \rightarrow \infty} \frac{K_\Omega(g^n(z), z) }{n} >0
\end{align*}
for all $z \in \Omega$. So by Lemma~\ref{lem:dist_norm_est} 
\begin{align}
\label{eq:ineq_norm_growth}
\liminf_{n \rightarrow \infty} \frac{\log \norm{\Ad(g)^n}}{n}  >0.
\end{align}
Using the Jordan decomposition, see Theorem~\ref{thm:jordan_decomp}, we can write $g = khu$ where $k$ is $\Lc$-elliptic, $h$ is $\Lc$-hyperbolic, $u$ is $\Lc$-unipotent, and $k,h, u$ commute. Then since $\Ad(k)$ is elliptic and $\Ad(u)$ is unipotent we have
\begin{align*}
 \lim_{n \rightarrow \infty} \frac{\log \norm{\Ad(ku)^n}}{n} \leq  \lim_{n \rightarrow \infty} \frac{\log \norm{\Ad(k)^n}+\log \norm{\Ad(u)^n}}{n} =0
\end{align*}
so
\begin{align*}
 \liminf_{n \rightarrow \infty} \frac{\log \norm{\Ad(h)^n}}{n}  \geq \liminf_{n \rightarrow \infty} \frac{\log \norm{\Ad(g)^n}-\log \norm{\Ad(ku)^n}}{n} >0.
\end{align*}
Thus $\Ad(h) \neq 1$. 

We claim that $ku$ is elliptic (as in Definition~\ref{defn:elems_ft}). Since 
\begin{align*}
\lim_{n \rightarrow \infty} \frac{\log \norm{\Ad(ku)^n}}{n}  =0
\end{align*}
Lemmas~\ref{lem:trans_dist_ft} and~\ref{lem:dist_norm_est} imply that $ku$ is not hyperbolic. Now fix some $z_0 \in \Omega$. Since $ku$ commutes with $g$ we see that 
\begin{align*}
ku(\ell^{\pm}_g) = ku \left( \lim_{n \rightarrow \pm \infty} g^n z_0 \right) = \lim_{n \rightarrow \pm \infty} g^n kuz_0 = \ell^{\pm}_g.
\end{align*}
So $ku$ cannot be parabolic by Corollary~\ref{cor:parabolic_dynamics}. So $ku$ must be elliptic. 

Now since $ku$ is elliptic, the set $\{ (ku)^nz_0 : n \in \Zb\}$ is relatively compact in $\Omega$. So by Corollary~\ref{cor:NS} 
\begin{align*}
\lim_{n \rightarrow \pm \infty} h^n(z_0) = \lim_{n \rightarrow \pm \infty} g^n((ku)^{-n} z_0) = \ell^{\pm}_g.
\end{align*}
So $h$ is hyperbolic.
\end{proof}

\begin{lemma} With the notation above, $G$ is a simple Lie group of non-compact type and has real rank one. \end{lemma}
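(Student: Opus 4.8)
The plan is to derive both the simplicity of $G$ and the rank bound from the compactness statement in Lemma~\ref{lem:finite_centralizers}, applied to the element furnished by Lemma~\ref{lem:double_hyp}. First I would record what is already essentially built into the definition of $G$: by construction $G=\prod\{G_i : G_i \text{ is non-compact}\}$ is a connected semisimple Lie group with finite center and no compact simple factors, hence of non-compact type; and it is non-trivial because $\Lc(\Omega;G)=\Lc(\Omega)$ contains at least two points, so $G$ is non-compact. Thus the only substantive claims left are that $\mathrm{rank}_\Rb G = 1$ and that $G$ has a single simple factor.

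Next I would fix, via Lemma~\ref{lem:double_hyp}, an element $g \in G$ that is simultaneously hyperbolic as an automorphism of $\Omega$ and $\Lc$-hyperbolic. Since $\Ad(g)$ is real-diagonalizable with positive eigenvalues, the structure theory recalled in Section~\ref{sec:basic_properties} and Theorem~\ref{ref:iwasawa} shows that $g$ is conjugate into the maximal $\Rb$-split abelian subgroup $A=\exp(\aL)$, where $\aL$ is a Cartan subspace with $\dim\aL = \mathrm{rank}_\Rb G$. Conjugation preserves both ``hyperbolic as an automorphism'' (it only moves the fixed points $\ell_g^\pm$) and ``$\Lc$-hyperbolic,'' so after replacing $g$ by a conjugate I may assume $g=\exp(X)$ for some nonzero $X\in\aL$. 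Then $g^n=\exp(nX)\in A$ for all $n\in\Zb$, so $\{g^n : n\in\Zb\}$ is a discrete, rank-one subgroup of $A$.

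The crux of the argument is then a short deduction. Because $\aL$ is abelian and $X\in\aL$, the entire torus $A$ lies in $C_G(g)$. The subgroup $\{g^n : n\in\Zb\}$ is central and closed in $C_G(g)$ and is contained in $A$, so under the quotient map $C_G(g)\to C_G(g)/\{g^n : n\in\Zb\}$ the image of $A$ is exactly $A/\{g^n : n\in\Zb\}$; this image is closed because its preimage is $A$ itself, which is closed. By Lemma~\ref{lem:finite_centralizers} the ambient quotient $C_G(g)/\{g^n : n\in\Zb\}$ is compact, hence so is the closed subset $A/\{g^n : n\in\Zb\}$. But $A\cong\Rb^{r}$ with $r=\mathrm{rank}_\Rb G$ and $\{g^n : n\in\Zb\}$ corresponds to the discrete subgroup $\Zb X$, so $A/\{g^n : n\in\Zb\}\cong \Rb^{r-1}\times S^1$, which is compact precisely when $r=1$. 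Therefore $G$ has real rank one. Finally, since real rank is additive over the almost-direct product of the non-compact simple factors of $G$ and each such factor has real rank at least one, there can be exactly one factor; hence $G$ is simple of non-compact type with real rank one.

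I expect the main obstacle to be the normalization step: correctly invoking the structure theory to place the doubly-hyperbolic element $g$ inside a maximal $\Rb$-split torus $A$ while retaining $A\subseteq C_G(g)$, and then identifying $A/\{g^n : n\in\Zb\}$ topologically as $\Rb^{r-1}\times S^1$. Once that bookkeeping is in place, the compactness supplied by Lemma~\ref{lem:finite_centralizers} forces $r=1$ immediately, and the passage from rank one to simplicity is routine.
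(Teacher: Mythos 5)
Your proposal is correct and follows essentially the same route as the paper: take the element from Lemma~\ref{lem:double_hyp} that is both hyperbolic and $\Lc$-hyperbolic, conjugate it into a maximal Cartan subgroup, observe that this abelian subgroup sits inside the centralizer, and use the compactness of $C_G(h)/\{h^n : n\in\Zb\}$ from Lemma~\ref{lem:finite_centralizers} to force $\mathrm{rank}_\Rb(G)=1$, with simplicity following from additivity of real rank over the non-compact simple factors. The only small discrepancy is that Proposition~\ref{prop:cartan_conj} places the element in $Z(G)A$ rather than in $A$ itself, but since $Z(G)$ is finite the quotient computation is unaffected (the paper works with $Z(G)A/\{h^n\}$ directly).
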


\begin{proof} Pick an element $h \in G$ which is hyperbolic and $\Lc$-hyperbolic. By Proposition~\ref{prop:cartan_conj}, there exists a maximal Cartan subgroup $A \leq G$ such that $h \in Z(G)A$. Then $Z(G)A \leq C(h)$ and so by Lemma~\ref{lem:finite_centralizers} the quotient $Z(G)A / \{ h^n : n \in \Zb\}$ is compact. Since $A$ is isomorphic to $\Rb^r$ where $r = { \rm rank}_{\Rb}(G)$, this implies that $r=1$.\end{proof}

\subsection{The automorphism group has finitely many components}\label{sec:finitely_comp}

In this section we show that $\Aut_0(\Omega)$ has finite index in $\Aut(\Omega)$.

Since $G$ is a normal subgroup in $\Aut(\Omega)$, associated to every $g \in \Aut(\Omega)$ is an element $\tau(g) \in { \rm Aut}(G)$ defined by 
\begin{align*}
\tau(g)( h) = ghg^{-1}.
\end{align*}
Next let ${ \rm Inn}(G)$ denote the \emph{inner automorphisms of $G$}, that is the automorphisms of the form $g \rightarrow hgh^{-1}$ where $h \in G$. Then let ${\rm Out}(G) = \Aut(G)/{\rm Inn}(G)$. Finally define $[\tau]:\Aut(\Omega) \rightarrow {\rm Out}(G)$ by letting $[\tau](g)$ denote the equivalence class of $\tau(g)$. 

Since $G$ is semisimple, ${ \rm Out }(G)$ is finite (see for instance~\cite[Chapter X]{H2001}). So to prove that $\Aut_0(\Omega)$ has finite index in $\Aut(\Omega)$, it is enough to prove the following. 

\begin{lemma}\label{lem:finite_comp}
With the notation above, $\Aut_0(\Omega)$ has finite index in $\ker [\tau]$. In particular, $\Aut_0(\Omega)$ has finite index in $\Aut(\Omega)$.
\end{lemma}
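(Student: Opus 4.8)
The plan is to work with the centralizer $C := C_{\Aut(\Omega)}(G)$ of $G$ in $\Aut(\Omega)$. Since $G$ is semisimple, ${\rm Out}(G)$ is finite, so $[\tau]$ has finite image and $[\Aut(\Omega):\ker[\tau]] < \infty$; thus it suffices to bound $[\ker[\tau]:\Aut_0(\Omega)]$. First I would identify $\ker[\tau]$: if $g \in \ker[\tau]$ then $\tau(g) \in {\rm Inn}(G)$, so there is $h \in G$ with $\tau(g) = \tau(h)$, whence $h^{-1}g \in C$ and $g \in G\cdot C$. As $G \leq \Aut_0(\Omega) \leq \ker[\tau]$ and $C \leq \ker[\tau]$, this gives $\ker[\tau] = \Aut_0(\Omega)\cdot C$, so by the second isomorphism theorem $[\ker[\tau]:\Aut_0(\Omega)] = [C : C\cap\Aut_0(\Omega)]$. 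Since $\Aut_0(\Omega)$ is open in $\Aut(\Omega)$, the subgroup $C\cap\Aut_0(\Omega)$ is open in $C$, and therefore the whole lemma reduces to showing that $C$ is compact.

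To prove $C$ is compact I would use that $\Aut(\Omega)$ acts properly on $\Omega$ and that $C$ is closed, so it is enough to show every sequence $c_n \in C$ has a subsequence converging in $\Aut(\Omega)$. The key geometric input is that each $c \in C$ commutes with every hyperbolic $h \in G$ and therefore fixes both $\ell_h^+$ and $\ell_h^-$. Using Lemma~\ref{lem:double_hyp}, Lemma~\ref{lem:G_has_no_fixed_pts}, and Theorem~\ref{thm:SYKim2}(4) (together with the observation from the proof of Lemma~\ref{lem:G_has_no_fixed_pts} that a hyperbolic element of $\Aut_0(\Omega)$ can be replaced by one in $G$ with the same fixed points), I would first produce two hyperbolic elements $h_1, h_2 \in G$ whose four fixed points $\ell_{h_1}^+, \ell_{h_1}^-, \ell_{h_2}^+, \ell_{h_2}^-$ are all distinct; every element of $C$ then fixes all four. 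For each $i$ let $\gamma_i$ be the $(\lambda,\kappa)$-almost-geodesic translated by $h_i$ from Proposition~\ref{prop:translate_ft}, so $\gamma_i(t) \to \ell_{h_i}^\pm$ as $t \to \pm\infty$. Given $c_n \in C$, the curves $c_n\gamma_i$ are almost-geodesics with the same endpoints $\ell_{h_i}^\pm$, and exactly as in the proof of Lemma~\ref{lem:finite_centralizers} (applying Theorem~\ref{thm:visible_ft} on suitable subintervals and then translating by powers of $h_i$) I obtain, along a common subsequence, integers $m_n^{(1)}, m_n^{(2)}$ such that both $c_n h_1^{m_n^{(1)}}$ and $c_n h_2^{m_n^{(2)}}$ converge in $\Aut(\Omega)$.

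The main obstacle is that this only controls $c_n$ up to the powers $h_i^{m_n^{(i)}}$, and these exponents could a priori be unbounded (this is precisely the danger that $c_n$ escapes to infinity along the ``$h_i$-direction''). To rule this out I would argue as follows. If $m_n^{(1)} \to +\infty$ along a subsequence, then, writing $(c_n h_1^{m_n^{(1)}})^{-1} = h_1^{-m_n^{(1)}} c_n^{-1}$ and evaluating the convergent sequence of inverses at $z_0$, the point $h_1^{-m_n^{(1)}}(c_n^{-1}z_0)$ converges to an interior point of $\Omega$. But Corollary~\ref{cor:NS} forces $h_1^{-m_n^{(1)}}(q_n) \to \ell_{h_1}^-$ whenever $q_n$ stays away from $\ell_{h_1}^+$, so necessarily $c_n^{-1}z_0 \to \ell_{h_1}^+$ (and symmetrically $c_n^{-1}z_0 \to \ell_{h_1}^-$ if $m_n^{(1)} \to -\infty$). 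Thus unboundedness of $m_n^{(1)}$ forces any convergent subsequence of $c_n^{-1}z_0$ to have its limit in $\{\ell_{h_1}^+,\ell_{h_1}^-\}$, and likewise unboundedness of $m_n^{(2)}$ forces the limit into the disjoint set $\{\ell_{h_2}^+,\ell_{h_2}^-\}$.

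Since a convergent subsequence of $c_n^{-1}z_0$ has a single limit, it cannot lie in both disjoint sets, so at least one of the exponent sequences $m_n^{(1)}, m_n^{(2)}$ admits a bounded subsequence. Passing to that subsequence, the corresponding exponent is eventually constant, say equal to $m$, and then $c_n = (c_n h_i^{m})\,h_i^{-m}$ converges in $\Aut(\Omega)$. Hence $C$ is compact, $[C:C\cap\Aut_0(\Omega)]$ is finite, and the lemma follows. I expect the verification of this exponent-boundedness step to be the only point requiring real care; the reductions in the first paragraph and the extraction of subsequences are formal, and the dynamical input is a direct adaptation of Lemma~\ref{lem:finite_centralizers} sharpened by the use of two hyperbolic elements with disjoint fixed-point sets.
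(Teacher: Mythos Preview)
Your proof is correct, but it takes a longer route than the paper's. The paper does not try to prove that the centralizer $C$ is compact at this stage; instead it shows directly that $\ker[\tau]/G$ is compact. Given a sequence $g_n \in \ker[\tau]$, one replaces $g_n$ by $g_n \overline{g}_n^{-1}$ with $\overline{g}_n \in G$ so that the new sequence centralizes all of $G$, then applies the argument of Lemma~\ref{lem:finite_centralizers} with a single hyperbolic $h \in G$ to get $g_{n_k} h^{m_k}$ convergent. Since $h^{m_k} \in G$, this already witnesses compactness of $\ker[\tau]/G$; there is no need to bound the exponents $m_k$. Compactness of $\ker[\tau]/G$ immediately gives finiteness of $\ker[\tau]/\Aut_0(\Omega)$ because $G \leq \Aut_0(\Omega)$ and $\Aut_0(\Omega)$ is open.

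Your reduction $\ker[\tau] = \Aut_0(\Omega)\cdot C$ and the second-isomorphism step are fine, and your two-hyperbolic-element argument for the compactness of $C$ is a valid workaround for the exponent problem. But the entire ``exponent-boundedness'' step is an artifact of insisting on proving the stronger fact that $C$ itself is compact. The paper eventually does establish that $C$ is compact (Lemma~\ref{lem:compact_center}), but only later and by a much softer argument: once one knows $\Lc(\Omega)$ has more than two points, any $c \in C$ fixes $\Lc(\Omega)$ pointwise and Theorem~\ref{thm:Bell2} forces relative compactness. So your approach front-loads work that the paper defers and then handles more cheaply.
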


\begin{proof} It is enough to show that the quotient $\ker [\tau] / G$ is compact. So suppose that $g_n \in \ker [\tau]$ is a sequence. We claim that there exists $n_k \rightarrow \infty$ and $h_k \in G$ such that $g_{n_k} h_k$ converges in $\Aut(\Omega)$. Now for each $n \in \Nb$ there exists some $\overline{g}_n \in G$ such that $\tau(g_n) = \tau(\overline{g}_n)$. Then by replacing each $g_n$ with $g_n\overline{g}_n^{-1}$ we can assume that 
\begin{align*}
g_n g g_n^{-1} = g
\end{align*} 
for every $g \in G$ and $n \in \Nb$. Now fix a hyperbolic element $h \in G$. Then $g_n \in C(h)$ and so by Lemma~\ref{lem:finite_centralizers}  there exists $n_k \rightarrow \infty$ and $m_k \in \Zb$ such that $g_{n_k} h^{m_k}$ converges in $\Aut(\Omega)$. Since $g_n$ was an arbitrary sequence in $\ker [\tau]$ we see that $\ker [\tau] / G$ is compact. Hence $\Aut_0(\Omega)$ has finite index in $\ker [\tau]$.

\end{proof} 

\subsection{The limit set is a sphere}\label{subsec:sphere_ft}

In this subsection we show that $\Lc(\Omega)$ is homeomorphic to a sphere. 

%We begin with the following observation.
%
%\begin{lemma} With the notation above, if $\varphi \in Z(G)$, then $\varphi(x) = x$ for all $x \in \Lc(\Omega)$. \end{lemma}
%
%\begin{proof} By Lemma~\ref{lem:limit_sets_same_ft}, $\Lc(\Omega) = \Lc(\Omega; G)$. So if $x \in \Lc(\Omega)$ then there exists $z_0 \in \Omega$ and a sequence $g_n \in G$ such that $g_n(z_0) \rightarrow x$. Then if $\varphi \in Z(G)$ we have 
%\begin{align*}
%\varphi(x)= \varphi \left( \lim_{n \rightarrow \infty} g_n(z_0) \right) = \lim_{n \rightarrow \infty} \varphi g_n(z_0) = \lim_{n \rightarrow \infty} g_n(\varphi z_0) = x
%\end{align*}
%by Theorem~\ref{thm:Bell2}.
%\end{proof}

We now consider the symmetric space associated to $G$, see Section~\ref{sec:basic_properties} for more details. Let $K \leq G$ be a maximal compact subgroup and let $X = G/K$ be the associated symmetric space. Since $G$ has real rank one, $X$ is negatively curved. Let $X(\infty)$ be the geodesic boundary of $X$. Fix a point $\xi_0 \in X(\infty)$ and let $P$ be the stabilizier of $\xi_0$ in $G$. Since $G$ acts transitively on $X(\infty)$, see Section~\ref{sec:parabolic}, we can identify $X(\infty)$ with $G/P$. 

\begin{lemma}\label{lem:parabolics_ft} With the notation above, there exists a point $x_0 \in \Lc(\Omega)$ such that 
\begin{align*}
P = \{ g \in G : g(x_0) = x_0\}.
\end{align*}
Further, $G \cdot x_0 = \Lc(\Omega)$ and $\Lc(\Omega)$ is a smooth submanifold of $\partial \Omega$ diffeomorphic to a sphere of $\dim X-1$. \end{lemma}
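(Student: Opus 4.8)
The plan is to realize $x_0$ as the attracting fixed point of a well-chosen hyperbolic element and to identify the orbit map $G/P \to \partial \Omega$ with the inclusion $\Lc(\Omega) \hookrightarrow \partial\Omega$. First I would fix, using Lemma~\ref{lem:double_hyp}, an element $h \in G$ that is simultaneously hyperbolic (in $\Aut(\Omega)$) and $\Lc$-hyperbolic (in $G$). As an $\Lc$-hyperbolic element, $h$ is a transvection along a geodesic of the symmetric space $X = G/K$ whose attracting endpoint is some $\xi \in X(\infty)$; after replacing the base point $\xi_0$ by $\xi$ I may assume $P = \{ g \in G : g\xi = \xi\}$ is the minimal parabolic stabilizing this endpoint, with Langlands decomposition $P = MAN$, where $A$ is the Cartan subgroup through $h$ and $N$ is the horospherical subgroup satisfying $\lim_{n\to\infty} h^{-n} u h^{n} = e$ for all $u \in N$. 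I then set $x_0 := \ell_h^+ \in \Lc(\Omega)$.

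The first claim to establish is $P = \{ g \in G : g(x_0) = x_0\}$. For the inclusion $P \subseteq \mathrm{Stab}_G(x_0)$ I would argue dynamically. Every element of $MA$ commutes with $h$ (up to the finite center), so for $g \in MA$ we get $g(x_0) = g \lim_n h^n(z_0) = \lim_n h^n(g z_0) = x_0$ by the Wolff--Denjoy behaviour of $h$ (Theorem~\ref{thm:wolf_ft}), since $g z_0 \in \Omega$. For $u \in N$ I would write $u(x_0) = \lim_n h^n\big( (h^{-n} u h^{n}) z_0 \big)$ and combine $h^{-n} u h^{n} \to e$ with the north--south dynamics of $h$ (Corollary~\ref{cor:NS}): the points $(h^{-n} u h^{n}) z_0$ converge to $z_0 \in \Omega$, hence their $h^n$-images converge to $\ell_h^+ = x_0$. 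Thus $P$ fixes $x_0$. For the reverse inclusion, $\mathrm{Stab}_G(x_0)$ is a closed subgroup containing the parabolic $P$; since in real rank one the only subgroups of $G$ lying between the minimal parabolic $P$ and $G$ are $P$ and $G$ itself, either $\mathrm{Stab}_G(x_0) = P$ or $G$ fixes $x_0$. The latter is impossible by Lemma~\ref{lem:G_has_no_fixed_pts}, so $\mathrm{Stab}_G(x_0) = P$.

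Next I would prove transitivity, $G \cdot x_0 = \Lc(\Omega)$. One inclusion is immediate: $x_0 \in \Lc(\Omega)$ and $\Lc(\Omega) = \Lc(\Omega; G)$ is $G$-invariant (Lemma~\ref{lem:limit_sets_same_ft}, Proposition~\ref{prop:subgroups}), so $G \cdot x_0 \subseteq \Lc(\Omega)$; moreover $G \cdot x_0$ is the continuous bijective image of the compact space $G/P$, hence compact and closed. The content is the reverse inclusion. Here I would show every $y \in \Lc(\Omega; G)$ is a limit of attracting fixed points of hyperbolic elements of $G$, all of which lie in $G \cdot x_0$: indeed any hyperbolic $g \in G$ has, by the Jordan decomposition argument of Lemma~\ref{lem:double_hyp} together with conjugacy of Cartan subgroups, $\ell_g^+ \in G \cdot x_0$, and likewise $\ell_h^- = \ell_{h^{-1}}^+ \in G \cdot x_0$, so $G \cdot x_0$ already contains all repelling fixed points as well. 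Given $y \in \Lc(\Omega; G)$ choose $g_n \in G$ with $g_n(z_0) \to y$ and, after a subsequence, $g_n^{-1}(z_0) \to y^-$. If $y \neq y^-$, Lemma~\ref{lem:construct_hyp_ft} makes $g_n$ hyperbolic for large $n$ with $\ell_{g_n}^+ \to y$, so $y \in \overline{G \cdot x_0} = G \cdot x_0$. If instead $y = y^-$ and $y \notin G \cdot x_0$, I would perturb: choosing a conjugate $h_*$ of $h$ (whose fixed points lie in $G \cdot x_0$, hence differ from $y$) and setting $f_n = g_n h_*$, one obtains $f_n(z_0) \to y$ and $f_n^{-1}(z_0) \to h_*^{-1}(y) \neq y$ (as $y$ is not fixed by $h_*$ by Corollary~\ref{cor:NS}), so again $\ell_{f_n}^+ \to y \in G \cdot x_0$, a contradiction. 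Hence $\Lc(\Omega) = \Lc(\Omega; G) = G \cdot x_0$.

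Finally, the smooth structure follows formally. The $G$-action on $\partial\Omega$ is smooth (Theorem~\ref{thm:Bell1}), so the orbit map $G/P \to \partial\Omega$, $gP \mapsto g(x_0)$, is an injective immersion of the compact manifold $G/P$; an injective immersion from a compact manifold is an embedding, so $\Lc(\Omega) = G \cdot x_0$ is a smoothly embedded submanifold diffeomorphic to $G/P = X(\infty)$. Since $G$ has real rank one, $X$ is negatively curved and $X(\infty)$ is a sphere of dimension $\dim X - 1$, giving the last assertion. I expect the main obstacle to be the transitivity step: the degenerate case $y = y^-$ forces the perturbation argument, and one must know in advance that every attracting (and repelling) fixed point of a hyperbolic element of $G$ already lies in the single closed orbit $G \cdot x_0$, which is what allows the approximation to close up.
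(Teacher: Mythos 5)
Your treatment of the stabilizer and of the smooth structure is essentially the paper's own argument: the paper also takes $x_0 = \ell_h^+$ for an element $h$ that is simultaneously hyperbolic and $\Lc$-hyperbolic with prescribed endpoint $\xi_0$, proves $P \subseteq \mathrm{Stab}_G(x_0)$ by combining the convergence of $h^{-n}ph^n$ with Theorem~\ref{thm:Bell2} (it invokes Theorem~\ref{thm:parabolic_decomp}, which packages your separate $MA$ and $N$ cases into a single statement valid for all $p \in P$), and then uses maximality of parabolic subgroups together with Lemma~\ref{lem:G_has_no_fixed_pts}, followed by compactness of $G/P$ to get the embedded sphere. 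Where you genuinely diverge is the transitivity step, and there your route is both longer and slightly gapped. The paper's argument avoids hyperbolic elements entirely: given $g_n(z_0) \to x$ and (after a subsequence) $g_n^{-1}(z_0) \to y$, Theorem~\ref{thm:Bell2} gives $g_n(z) \to x$ locally uniformly on $\overline{\Omega}\setminus\{y\}$ \emph{including at boundary points}; since $G\cdot x_0$ is compact, $G$-invariant, and not a single point, one picks $g_0$ with $g_0x_0 \neq y$ and concludes $x = \lim_n g_n(g_0x_0) \in G\cdot x_0$, with no case distinction on whether $x=y$. Your route instead requires the sub-claim that $\ell_g^{\pm} \in G\cdot x_0$ for every hyperbolic $g \in G$, and as written its base case is circular: you justify $\ell_h^- \in G\cdot x_0$ by ``likewise,'' but your general mechanism (Jordan decomposition plus Proposition~\ref{prop:cartan_conj}) only reduces $\ell_g^+$ to a $G$-translate of $\ell_h^+$ or of $\ell_h^-$, so it presupposes the very membership $\ell_h^- \in G\cdot x_0$ when applied to $g = h^{-1}$. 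The gap is easily repaired --- pick any $p \in G\cdot x_0 \setminus \{\ell_h^+\}$ (possible since $G\cdot x_0 \cong G/P$ is infinite) and apply Corollary~\ref{cor:NS} to get $h^{-n}p \to \ell_h^-$ inside the closed set $G\cdot x_0$ --- and your perturbation handling the degenerate case $y = y^-$ is correct, but the paper's argument shows the whole detour through attracting fixed points is unnecessary.
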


\begin{proof}
Since $G$ acts transitively on $X(\infty)$, there exists an $\Lc$-hyperbolic element $h$ such that $\omega_h^+ = \xi_0$. 
Then by Theorem~\ref{thm:parabolic_decomp} the limit
\begin{align*}
\lim_{n \rightarrow \infty} h^{-n} p h^n \in G
\end{align*}
exists for every $p \in P$. 

Let $x_0 = \ell_h^+$. Then if $p \in P$ and $z \in \Omega$ we have
\begin{align*}
p x_0 = p \left(\lim_{n \rightarrow \infty} h^n z \right)=  \lim_{n \rightarrow \infty} h^n \left(h^{-n} p h^n\right) z = x_0
\end{align*}
by Theorem~\ref{thm:Bell2}. So $P$ fixes $x_0$. 

Let 
\begin{align*}
H = \{ g \in G : g(x_0) = x_0\}.
\end{align*}
Then $H$ is closed and $P \leq H$. So by Theorem~\ref{thm:parabolic} either $H=P$ or $H=G$. However Lemma~\ref{lem:G_has_no_fixed_pts} implies that $G \cdot x_0 \neq \{x_0\}$, so we must have that
\begin{align*}
P = \{ g \in G : g(x_0) = x_0\}.
\end{align*}

Then the map $g \in G/P \rightarrow g \cdot x_0$ induces a continuous, injective map  $G/P \rightarrow G \cdot x_0$. By the discussion in Section~\ref{sec:parabolic}, $G/P$ is diffeomorphic to a sphere of dimension $\dim X -1$. Then, since $G/P$ is compact, the map 
\begin{align*}
g \in G/P \rightarrow g \cdot x_0 \in G \cdot x_0
\end{align*}
 is actually a homeomorphism.  In particular, $G \cdot x_0$ is a compact subset of $\partial \Omega$. Since $G$ acts smoothly on $\partial \Omega$ and the orbit $G \cdot x_0$ is closed, it follows that $G \cdot x_0$ is a smooth submanifold of $\partial \Omega$ which is diffeomorphic to $G/P$, see for instance~\cite[Theorem 15.3.7]{tD2008}. 

We next show that $G \cdot x_0 = \Lc(\Omega)$. Suppose that $x \in \Lc(\Omega)$. By Lemma~\ref{lem:limit_sets_same_ft}, $\Lc(\Omega) = \Lc(\Omega; G)$. So there exist $z_0 \in \Omega$ and a sequence $g_n \in G$ such that $g_n(z_0) \rightarrow x$. By passing to a subsequence we can suppose that $g_n^{-1}(z_0) \rightarrow y$. Then by Theorem~\ref{thm:Bell2}, $g_n(z) \rightarrow x$ for all $z \in \overline{\Omega} \setminus \{ y\}$. Since $G \cdot x_0$ is not a single point, there exists some $g_0 \in G$ such that $g_0x_0 \neq y$. Then $g_n(g_0x_0) \rightarrow x$. Since $G \cdot x_0$ is compact we then see that $x \in G \cdot x_0$. 
\end{proof}

\subsection{The group $G$ is locally isomorphic to $\SU(1,k)$}\label{sec:finding_su}

In this subsection we prove that $G$ is locally isomorphic to $\SU(1,k)$ for some $k \geq 1$. 

If $\dim_{\Rb} X(\infty)=1$, then by the classification of negatively curved symmetric spaces $X$ must be isometric to real hyperbolic 2-space. Then $G$ is locally isomorphic to $\SU(1,1)$. 

Next assume that $\dim_{\Rb} X(\infty) \geq 2$. Then 
\begin{align*}
\dim_{\Rb} T_{z} \Lc(\Omega) + \dim_{\Rb} T_z^{\Cb} \partial \Omega \geq  2+(2d-2) = 2d
\end{align*}
so
\begin{align*}
T_{z} \Lc(\Omega) \cap T_z^{\Cb} \partial \Omega \neq (0)
\end{align*}
for every $z \in \Lc(\Omega)$. 

\begin{lemma} With the notation above, $T_{z} \Lc(\Omega)$ is not contained in $T_z^{\Cb} \partial \Omega$ for every $z \in \Lc(\Omega)$. In particular, 
\begin{align*}
\dim_{\Rb}\left(T_{z} \Lc(\Omega) \cap T_z^{\Cb} \partial \Omega \right)= \dim \Lc(\Omega)-1
\end{align*}
for all $z \in \Lc(\Omega)$.
\end{lemma}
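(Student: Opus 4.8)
The plan is to show that the attracting fixed point direction of an $\Lc$-hyperbolic element provides a tangent vector to $\Lc(\Omega)$ that escapes the complex tangent space, and then propagate this to every point by equivariance. First I would invoke Theorem~\ref{thm:SYKim2} Part (2): since $\Aut(\Omega)$ contains a hyperbolic element, there is a one-parameter group $u_t \in \Aut(\Omega)$ with $\left.\frac{d}{dt}\right|_{t=0} u_t(\ell_h^+) \notin T_{\ell_h^+}^{\Cb}\partial\Omega$ and $u_t(\ell_h^-)=\ell_h^-$. The curve $t \mapsto u_t(\ell_h^+)$ lies in $\Lc(\Omega)$, because $\ell_h^+ \in \Lc(\Omega)$ and $\Lc(\Omega)$ is $\Aut(\Omega)$-invariant by Proposition~\ref{prop:subgroups} (with $N = \Aut(\Omega)$ normalizing $G$, using Lemma~\ref{lem:limit_sets_same_ft} that $\Lc(\Omega;G)=\Lc(\Omega)$). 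Hence its velocity at $t=0$ is a tangent vector to $\Lc(\Omega)$ at $\ell_h^+$ that is \emph{not} contained in $T_{\ell_h^+}^{\Cb}\partial\Omega$. This settles the claim at the single point $z=\ell_h^+$.

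The next step is to upgrade this from one point to all of $\Lc(\Omega)$ using the transitivity established in Lemma~\ref{lem:parabolics_ft}, namely $G \cdot x_0 = \Lc(\Omega)$. I want to choose the hyperbolic element $h$ so that $\ell_h^+$ is the base point $x_0$; by Theorem~\ref{thm:SYKim2} Part (4) together with Lemma~\ref{lem:double_hyp} I can arrange an $\Lc$-hyperbolic (hence hyperbolic) element whose attracting fixed point is any prescribed $G$-orbit point, and since $G$ acts transitively on $\Lc(\Omega)$, it suffices to establish the property at $x_0$. Given an arbitrary $z \in \Lc(\Omega)$, write $z = g \cdot x_0$ for some $g \in G$. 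Because $G$ acts by biholomorphisms, $g$ extends to a diffeomorphism of $\overline{\Omega}$ (Theorem~\ref{thm:Bell1}) whose differential $dg_{x_0}$ is $\Cb$-linear on the boundary and therefore carries $T_{x_0}^{\Cb}\partial\Omega$ isomorphically onto $T_{z}^{\Cb}\partial\Omega$, while carrying $T_{x_0}\Lc(\Omega)$ onto $T_{z}\Lc(\Omega)$ since $\Lc(\Omega)$ is $G$-invariant. Consequently the non-containment $T_{x_0}\Lc(\Omega) \not\subset T_{x_0}^{\Cb}\partial\Omega$ transports to $T_{z}\Lc(\Omega) \not\subset T_{z}^{\Cb}\partial\Omega$.

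Finally, for the dimension count in the ``in particular'' clause, I combine the non-containment with the inequality already derived in the excerpt. We have $T_z\Lc(\Omega) \cap T_z^{\Cb}\partial\Omega \neq (0)$ for every $z$ from the dimension estimate preceding the lemma, and now we also know $T_z\Lc(\Omega)$ meets the one-real-codimensional subspace $T_z^{\Cb}\partial\Omega$ of $T_z\partial\Omega$ in a proper subspace. Since $T_z^{\Cb}\partial\Omega$ has real codimension $1$ inside $T_z\partial\Omega$, the intersection $T_z\Lc(\Omega)\cap T_z^{\Cb}\partial\Omega$ has codimension exactly $1$ inside $T_z\Lc(\Omega)$, giving $\dim_{\Rb}\big(T_z\Lc(\Omega)\cap T_z^{\Cb}\partial\Omega\big) = \dim\Lc(\Omega)-1$.

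The main obstacle I anticipate is the equivariance bookkeeping in the second step: I must be careful that the boundary extension of $g \in G$ is genuinely CR (so that its differential preserves the complex tangent subspaces), which relies on the Bell--Ligocka and Catlin extension results, and that the identification $z = g\cdot x_0$ is compatible with the smooth submanifold structure on $\Lc(\Omega)$ from Lemma~\ref{lem:parabolics_ft}. Provided the $G$-action on $\partial\Omega$ is smooth and preserves both $\Lc(\Omega)$ and the CR structure, the transport argument is routine; the genuinely substantive input is the existence of the transverse one-parameter group from Kim's rigidity result.
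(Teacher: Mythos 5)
Your argument is correct and takes essentially the same route as the paper: Theorem~\ref{thm:SYKim2} part (2) produces one point of $\Lc(\Omega)$ at which a tangent vector to $\Lc(\Omega)$ escapes $T^{\Cb}\partial\Omega$, and the transitive $G$-action on $\Lc(\Omega)$ (whose elements extend to CR diffeomorphisms of $\partial\Omega$ and hence preserve the complex tangent spaces) transports this to every point, after which the codimension-one count is immediate. The detour through part (4) and Lemma~\ref{lem:double_hyp} to arrange $\ell_h^+ = x_0$ is unnecessary---transitivity already carries the property from whichever point of $\Lc(\Omega)$ you establish it at to all others---but it does no harm.
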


\begin{proof} By Theorem~\ref{thm:SYKim2} part (2), there exists a point $z_0 \in \Lc(\Omega)$ such that  $T_{z_0} \Lc(\Omega)$ is not contained in $T_{z_0}^{\Cb} \partial \Omega$. Then since $G$ acts transitively on $\Lc(\Omega)$ we see that $T_{z} \Lc(\Omega)$ is not contained in $T_z^{\Cb} \partial \Omega$ for every $z \in \Lc(\Omega)$. 
\end{proof}

Then for $z \in \Lc(\Omega)$ let 
\begin{align*}
V_z = T_{z} \Lc(\Omega) \cap T_z^{\Cb} \partial \Omega.
\end{align*}
 Then $z \rightarrow V_z$ is a codimension one smooth distribution on $\Lc(\Omega)$. Further, since $G$ acts on $\Omega$ by biholomorphisms we see that $d(g)_zV_z = V_{gz}$ for all $g \in G$. So $V_z$ is a $G$-invariant distribution. So $G/P$ has a $G$-invariant codimension one smooth distribution. But this is only possible if $G$ is locally isomorphic to $\SU(1,k)$, see Theorem~\ref{thm:inv_dist}.

\subsection{Constructing an equivariant map}\label{subsec:equiv_maps} Recall that ${ \rm PU}(1,k)$ is the image of $\SU(1,k)$ in $\PGL_{k+1}(\Cb)$ and ${ \rm PU}(1,k)$ acts by fractional linear transformations on the unit ball $\Bb_k \subset \Cb^k$:
\begin{align*}
\begin{bmatrix} a & b^t \\ c & D \end{bmatrix} \cdot z = \frac{c+Dz}{a+b^tz}.
\end{align*}
In fact, this action gives an isomorphism 
\begin{align*}
\rho_0 : { \rm PU}(1,k) \rightarrow \Aut(\Bb_k)
\end{align*}

Since $G$ is locally isomorphic to $\SU(1,k)$, there exists an isomorphism $\pi: G/Z(G) \rightarrow { \rm PU}(1,k)$. So we have an isomorphism $\rho: G/Z(G) \rightarrow \Aut(\Bb_k)$ defined by $\rho = \rho_0 \circ \pi$. 

Now let $P$ be the group from Section~\ref{subsec:sphere_ft}. Then $\rho(P)$ is the stabilizer of a point in $w_0 \in \partial \Bb_k$. This follows from the fact that $\rho(P)$ is a parabolic subgroup of $\Aut(\Bb_k)$ or by simply repeating the proof of Lemma~\ref{lem:parabolics_ft} (since $\Bb_k$ is itself a bounded pseudoconvex domain with finite type). 

\begin{lemma}\label{lem:commute_fix} With the notation above, if $\varphi \in \Aut(\Omega)$ commutes with $G$, then $\varphi(x) =x$ for all $x \in \Lc(\Omega)$. \end{lemma}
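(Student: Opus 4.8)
The plan is to reduce the statement to showing that $\varphi$ fixes the attracting fixed point of every hyperbolic element of $G$, and then to observe that every point of $\Lc(\Omega)$ arises as such a fixed point.

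First I would record the elementary commutation argument. By Lemma~\ref{lem:limit_sets_same_ft}, $G$ contains a hyperbolic element $h$, and by Lemma~\ref{lem:parabolics_ft} we may take $x_0 = \ell_h^+$, so that $\Lc(\Omega) = G \cdot x_0 = G \cdot \ell_h^+$. Given any hyperbolic $h' \in G$ and any $z \in \Omega$, the definition of the attracting fixed point gives $\lim_{n \to \infty} (h')^n w = \ell_{h'}^+$ for every $w \in \Omega$. Using that $\varphi$ extends continuously to $\overline{\Omega}$ (Theorem~\ref{thm:Bell1}) and that $\varphi$ commutes with $h'$, I compute
\[
\varphi(\ell_{h'}^+) = \varphi\Big(\lim_{n \to \infty} (h')^n z\Big) = \lim_{n \to \infty} \varphi\big((h')^n z\big) = \lim_{n \to \infty} (h')^n \varphi(z) = \ell_{h'}^+,
\]
where the last equality uses $\varphi(z) \in \Omega$. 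Thus $\varphi$ fixes $\ell_{h'}^+$ for every hyperbolic $h' \in G$.

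Next I would show that every point of $\Lc(\Omega)$ is the attracting fixed point of some hyperbolic element of $G$. Fix $x \in \Lc(\Omega) = G \cdot \ell_h^+$ and write $x = g \cdot \ell_h^+$ for some $g \in G$. Since $g$ extends continuously to $\overline{\Omega}$, for any $w \in \Omega$ we have $(ghg^{-1})^n w = g h^n (g^{-1} w) \to g \cdot \ell_h^+ = x$, so $ghg^{-1}$ is hyperbolic with $\ell_{ghg^{-1}}^+ = x$. Because $\varphi$ commutes with all of $G$ it commutes with $ghg^{-1}$, and the previous paragraph gives $\varphi(x) = \varphi(\ell_{ghg^{-1}}^+) = \ell_{ghg^{-1}}^+ = x$. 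As $x \in \Lc(\Omega)$ was arbitrary, this completes the argument.

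The computations here are routine and I do not expect a serious obstacle. The only point requiring minor care is the interchange of $\varphi$ with the limits, which is justified by the continuous (indeed smooth) extension of the $\Aut(\Omega)$-action to $\overline{\Omega}$ from Theorem~\ref{thm:Bell1}, together with the fact that $\varphi$ and $g$ map $\Omega$ into $\Omega$, so that the relevant orbits stay inside the domain where the attracting dynamics apply.
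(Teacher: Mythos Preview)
Your argument is correct. It differs from the paper's proof mainly in the amount of structure it invokes: you pass through Lemma~\ref{lem:parabolics_ft} to write $\Lc(\Omega)=G\cdot\ell_h^+$, and then realize each point of the limit set as the attracting fixed point of a conjugate $ghg^{-1}$, reducing to the elementary commutation computation for a single hyperbolic element. The paper instead argues directly from $\Lc(\Omega)=\Lc(\Omega;G)$ (Lemma~\ref{lem:limit_sets_same_ft}): given $x\in\Lc(\Omega)$ it takes any sequence $g_n\in G$ with $g_n(z_0)\to x$ and computes
\[
\varphi(x)=\lim_{n\to\infty}\varphi g_n(z_0)=\lim_{n\to\infty}g_n(\varphi z_0)=x
\]
via Theorem~\ref{thm:Bell2}. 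Your route uses more of the developed structure (the orbit description of $\Lc(\Omega)$) but has the virtue of making the dynamics completely explicit; the paper's route is shorter and needs only the definition of the limit set together with Bell's convergence theorem, so it does not rely on the later Lemma~\ref{lem:parabolics_ft} at all.
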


\begin{proof}
By Lemma~\ref{lem:limit_sets_same_ft}, $\Lc(\Omega) = \Lc(\Omega; G)$. So if $x \in \Lc(\Omega)$ then there exists $z_0 \in \Omega$ and a sequence $g_n \in G$ such that $g_n(z_0) \rightarrow x$. Then 
\begin{align*}
\varphi(x)= \varphi\left( \lim_{n \rightarrow \infty} g_n(z_0) \right) = \lim_{n \rightarrow \infty}\varphi g_n(z_0) = \lim_{n \rightarrow \infty} g_n(\varphi z_0) = x
\end{align*}
by Theorem~\ref{thm:Bell2}.
\end{proof}

The above Lemma implies that $Z(G)$ acts trivially on $\Lc(\Omega)$ and so the action of $G$ on $\Lc(\Omega)$ induces an action of $G/Z(G)$ on $\Lc(\Omega)$. So we have a $\rho$-equivariant diffeomorphism $F: \Lc(\Omega) \rightarrow \partial \Bb_k$ defined by 
\begin{align*}
F(gx_0) = \rho(g) w_0.
\end{align*}

\subsection{The automorphism group is an almost direct product}\label{subsec:almost_dir_ft} In this section we prove that $\Aut(\Omega)$ is the almost direct product of $G$ and a compact subgroup, but first a lemma. 

\begin{lemma}\label{lem:compact_center} With the notation above, let $C$ denote the centralizer of $G$ in $\Aut(\Omega)$. Then $C$ is compact. 
\end{lemma}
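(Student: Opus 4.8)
The plan is to deduce compactness of $C$ from the properness of the $\Aut(\Omega)$-action on $\Omega$, together with the fact that every element of $C$ must fix the limit set pointwise.

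First I would observe that $C$ is a closed subgroup of $\Aut(\Omega)$, being the centralizer of a subset of a Hausdorff topological group. Since $\Aut(\Omega)$ acts properly on the bounded domain $\Omega$, for any compact $L \subseteq \Omega$ and any $z_0 \in \Omega$ the set $\{g \in \Aut(\Omega) : g(z_0) \in L\}$ is relatively compact; consequently a closed subgroup is compact as soon as one of its orbits is relatively compact in $\Omega$. Thus it suffices to show that the orbit $C \cdot z_0$ is relatively compact in $\Omega$ for some fixed $z_0 \in \Omega$.

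Suppose not. Then there is a sequence $c_n \in C$ with $c_n(z_0) \to x \in \partial\Omega$, and after passing to a subsequence we may also assume $c_n^{-1}(z_0) \to y \in \partial\Omega$. The key input is Lemma~\ref{lem:commute_fix}: since each $c_n$ commutes with $G$, it fixes every point of $\Lc(\Omega)$. On the other hand, Theorem~\ref{thm:Bell2} gives that $c_n$ converges to the constant $x$ locally uniformly on $\overline{\Omega} \setminus \{y\}$. By Lemma~\ref{lem:parabolics_ft} the set $\Lc(\Omega)$ is a sphere of positive dimension, hence infinite, so I may pick a point $w \in \Lc(\Omega)$ with $w \notin \{x, y\}$. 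Then $c_n(w) = w$ for all $n$, yet $c_n(w) \to x$, forcing $w = x$ --- a contradiction. Therefore $C \cdot z_0$ is relatively compact and $C$ is compact.

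The step requiring the most care is the reduction in the second paragraph: turning the hypothetical non-compactness of $C$ into a genuine boundary-escaping orbit is exactly where properness of the action enters, and it is what licenses the application of Theorem~\ref{thm:Bell2}. Once such a sequence is produced the contradiction is immediate, since the pointwise fixing of an infinite limit set is incompatible with the north--south convergence supplied by Theorem~\ref{thm:Bell2}.
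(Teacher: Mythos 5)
Your proof is correct and follows the same route as the paper: Lemma~\ref{lem:commute_fix} forces every element of $C$ to fix $\Lc(\Omega)$ pointwise, and the north--south convergence from Theorem~\ref{thm:Bell2} rules out a divergent sequence in $C$ because $\Lc(\Omega)$ has more than two points. The paper states this in two sentences; you have simply written out the properness reduction and the choice of $w \in \Lc(\Omega)\setminus\{x,y\}$ that the paper leaves implicit.
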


\begin{proof} By Lemma~\ref{lem:commute_fix} each $c \in C$ acts trivially on $\Lc(\Omega)$. Since $\#\Lc(\Omega) > 2$,  Theorem~\ref{thm:Bell2} implies that $C$ is compact. 
\end{proof}

As in Section~\ref{sec:finitely_comp}, let $\tau: \Aut(\Omega) \rightarrow \Aut(G)$ denote the homomorphism given by $\tau(g)(h) = ghg^{-1}$. Notice that $\tau(g)(Z(G)) = Z(G)$ and so $\tau$ descends to an automorphism of $G/Z(G)$. Then $\tau$ induces a homomorphism $\Phi: \Aut(\Omega) \rightarrow \Aut({ \rm PU}(1,k))$ defined by 
\begin{align*}
\Phi(g) = \rho \circ \tau(g) \circ \rho^{-1}.
\end{align*}

Let $\theta : { \rm PU}(1,k) \rightarrow { \rm PU}(1,k)$ denote the automorphism 
\begin{align*}
\theta(g) = \overline{g}
\end{align*}
 and let ${ \rm Inn}(\PU(1,k))$ denote the automorphisms of the form $g \rightarrow hgh^{-1}$ where $h \in \PU(1,k)$. Then it is well known that 
\begin{align*}
\Aut(\PU(1,k)) = { \rm Inn}(\PU(1,k)) \cup { \rm Inn}(\PU(1,k)) \circ \theta.
\end{align*}

Finally define the subgroup 
\begin{align*}
N = \Phi^{-1}\left( \{ \id, \theta\} \right) \leq \Aut(\Omega).
\end{align*}

\begin{proposition} With the notation above, 
\begin{enumerate}
\item $N$ is a compact normal subgroup of $\Aut(\Omega)$,
\item $\Aut(\Omega)$ is the almost direct product of $G$ and $N$. 
\end{enumerate}
\end{proposition}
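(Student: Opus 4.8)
The plan is to study the homomorphism $\Phi$ and to show that its image is contained in ${\rm Inn}(\PU(1,k))$; granting this, both assertions follow at once, because $N$ then collapses to $\ker\Phi$, which turns out to be the centralizer of $G$.

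First I would identify $\ker\Phi$. If $\Phi(\psi)=\id$, then $\psi h\psi^{-1}h^{-1}\in Z(G)$ for every $h\in G$. Since $G$ is connected, $Z(G)$ is finite, hence discrete, and the continuous map $h\mapsto \psi h\psi^{-1}h^{-1}$ sends $e$ to $e$; therefore it is identically $e$ and $\psi$ centralizes $G$. Thus $\ker\Phi$ equals the centralizer of $G$ in $\Aut(\Omega)$, which is compact by Lemma~\ref{lem:compact_center}. I would also record that $\Phi(G)={\rm Inn}(\PU(1,k))$: for $g\in G$ the automorphism $\tau(g)$ is inner, and $\rho$ carries inner automorphisms of $G/Z(G)$ onto inner automorphisms of $\PU(1,k)$.

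The heart of the matter, and the step I expect to be the main obstacle, is to prove $\Phi(\Aut(\Omega))\subseteq {\rm Inn}(\PU(1,k))$, i.e.\ that $\Phi^{-1}(C)=\emptyset$. For the identity component this is automatic, since the identity component of $\Aut(G)$ is ${\rm Inn}(G)$ and hence $\Phi(\Aut_0(\Omega))\subseteq {\rm Inn}(\PU(1,k))$; the issue is the finitely many remaining components, where a priori some $\psi$ could induce the outer (complex-conjugation) class. To exclude this I would use the equivariant diffeomorphism $F\colon \Lc(\Omega)\to\partial\Bb_k$ of Section~\ref{subsec:equiv_maps}. Since $\Lc(\Omega)$ is $\Aut(\Omega)$-invariant, each $\psi$ preserves it, and a short computation from the $\rho$-equivariance of $F$ shows that the diffeomorphism $\wt{\psi}:=F\psi F^{-1}$ of $\partial\Bb_k$ satisfies $\Phi(\psi)(\rho(g))=\wt{\psi}\,\rho(g)\,\wt{\psi}^{-1}$ for all $g\in G$; that is, $\Phi(\psi)$ is conjugation by $\wt{\psi}$ on $\PU(1,k)$. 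Because $\psi$ is holomorphic it preserves the complex distribution $V_z=T_z\Lc(\Omega)\cap T_z^{\Cb}\partial\Omega$ together with its complex structure, so $\psi$ is a CR automorphism of $\Lc(\Omega)$. As the CR-automorphism group of the sphere $\partial\Bb_k$ is exactly $\Aut(\Bb_k)=\PU(1,k)$ and contains no anti-CR map such as complex conjugation, this forces $\wt{\psi}\in\Aut(\Bb_k)$ and hence $\Phi(\psi)$ inner. The delicate point here is that $F$ matches the two CR structures up to conjugation, i.e.\ that $dF$ is complex-linear (or complex-antilinear) on $V_z$; I would obtain this from a Schur-type argument on the isotropy representation of $P$ on $V_{x_0}$, which for $k\geq 2$ is not isomorphic to its conjugate. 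The rank-one exceptional case $k=1$, where $V_z=0$, I would dispatch separately by an orientation argument, noting that complex conjugation reverses the orientation of $\partial\Bb_1=S^1$.

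Granting $\Phi(\Aut(\Omega))={\rm Inn}(\PU(1,k))$, the proposition is immediate. Since $C\notin {\rm Inn}(\PU(1,k))$ we have $N=\Phi^{-1}(\{\id,C\})=\ker\Phi$, which is compact and, being a kernel, normal; this gives~(1). For~(2), given $\psi\in\Aut(\Omega)$ the equality $\Phi(G)=\Phi(\Aut(\Omega))$ yields $g\in G$ with $\Phi(g)=\Phi(\psi)$, so $g^{-1}\psi\in\ker\Phi=N$ and $\psi\in GN$; thus $\Aut(\Omega)=G\cdot N$. Since $N$ is the centralizer of $G$, the factors commute, and $G\cap N=Z(G)$ is finite because $G$ has finite center. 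Therefore $\Aut(\Omega)$ is the almost direct product of $G$ and $N$.
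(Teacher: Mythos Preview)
Your identification $\ker\Phi=\{\psi:\psi\text{ centralizes }G\}$ is correct, and once you have $\Phi^{-1}(C)=\emptyset$ the rest of your argument is fine. The issue is how you get there. You attempt to prove $\Phi^{-1}(C)=\emptyset$ directly via CR geometry: push $\psi$ through $F$ to a diffeomorphism $\wt\psi$ of $\partial\Bb_k$, and argue that $\wt\psi$ must be CR rather than anti-CR. But this hinges on knowing that $F$ itself is CR (or uniformly anti-CR), and your Schur-type sketch for this is not complete: you would have to verify that the isotropy representation of $P$ on the contact hyperplane is complex-irreducible and not isomorphic to its conjugate, which requires unpacking the Langlands decomposition of $P$ in $\PU(1,k)$. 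The $k=1$ case is worse: the orientation argument needs $\psi$ to preserve a canonical orientation on the one-dimensional $\Lc(\Omega)$, and it is not evident why holomorphicity of $\psi$ on $\Omega$ forces this on a curve sitting inside $\partial\Omega$.

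The paper bypasses all of this with a two-line group-theoretic argument. Since $G$ and $N$ are both normal in $\Aut(\Omega)$, one has $[G,N]\le G\cap N$, and $G\cap N=Z(G)$ because $\Phi$ restricted to $G$ has image exactly ${\rm Inn}(\PU(1,k))$. Then for fixed $n\in N$ the continuous map $g\mapsto ngn^{-1}g^{-1}$ sends the connected group $G$ into the finite set $Z(G)$ and hits $e$, so it is identically $e$; thus $N$ centralizes $G$. This already gives $N\subseteq\ker\Phi$, hence $N=\ker\Phi$ and, a posteriori, $\Phi^{-1}(C)=\emptyset$ --- the very fact you were trying to prove by hand. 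Compactness of $N$ then follows from Lemma~\ref{lem:compact_center}, and $GN=\Aut(\Omega)$ from $\Phi(G)={\rm Inn}(\PU(1,k))$ together with $\Aut(\PU(1,k))={\rm Inn}(\PU(1,k))\cup{\rm Inn}(\PU(1,k))\circ C$.

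So your strategy is not wrong in spirit, but the hard step is both incomplete and unnecessary: the commutator-plus-connectedness trick gives the same conclusion with no geometry at all.
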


\begin{proof}
By definition $N$ is a normal closed subgroup of $\Aut(\Omega)$. Further, since $\pi: G/Z(G) \rightarrow \PU(1,k)$ is an isomorphism, we see that $G \cap N = Z(G)$. In particular, $G \cap N$ is finite.

We next claim that $GN = \Aut(\Omega)$. Consider some $g \in \Aut(\Omega)$. Then, since $\pi: G/Z(G) \rightarrow \PU(1,k)$ is an isomorphism, there exists some $h \in G$ such that $\Phi(hg) \in \{ \id, \theta\}$. So $g \in GN$. So $GN = \Aut(\Omega)$.

Next, we claim that $G$ and $N$ commute. Since $G$ and $N$ are normal subgroups we see that 
\begin{align*}
[G,N] \leq G \cap N = Z(G).
\end{align*}
But for $n \in N$ fixed, the set 
\begin{align*}
\left\{ ngn^{-1}g^{-1} : g \in G\right\} \leq [G,N] \leq Z(G)
\end{align*}
is connected and finite, so we see that $ngn^{-1} g^{-1} = 1$ for all $g \in G$. Since $n \in N$ was arbitrary, we then see that $ng = gn$ for all $n \in N$ and $g \in G$. 

Finally since $N$ is closed and commutes with $G$, Lemma~\ref{lem:compact_center} implies that $N$ is compact. 
\end{proof}

\section{Finite jet determination}\label{sec:finite_jet}

In this section we prove Corollary~\ref{cor:finite_jet} from the introduction. We will use the following two facts from Riemannian geometry. 

\begin{lemma}\label{lem:riem_1} Suppose $K$ is a compact Lie group acting smoothly on a compact manifold $M$. Then there exists a $K$-invariant Riemannian metric on $M$. 
\end{lemma}

\begin{proof}[Proof Sketch] Fix any Riemannian metric $g$ on $M$ and let $\mu$ be the Haar measure on $K$.  Then define a new Riemannian metric $\overline{g}$ by 
\begin{align*}
\overline{g}_x(v,w) = \int_{K} g_{kx}\left( d(k)_x v, d(k)_x w \right) d\mu(k).
\end{align*}
Then $\overline{g}$ is an $K$-invariant Riemannian metric on $M$. 
\end{proof}

\begin{lemma}\label{lem:riem_2} Suppose $(M,g)$ is a Riemannian manifold. If $F_1, F_2:M \rightarrow M$ are isometries and 
\begin{align*}
j_1(M, F_1, x) = j_1(M, F_2, x)
\end{align*}
for some $x \in M$, then there exists a neighborhood $U$ of $x$ such that $F_1|_U = F_2|_U$. 
\end{lemma}

\begin{remark} When $(M,g)$ is a complete Riemannian manifold, the conclusion of the lemma can be upgraded to say that $F_1 = F_2$. \end{remark}

\begin{proof}[Proof Sketch] For details see for instance~\cite[Chapter 1, Lemma 11.2]{H2001}. The idea is to find a neighborhood $V$ of $0$ in $T_xM$ where the exponential map $\exp_x:V \rightarrow M$ is well defined. Then prove that 
\begin{align*}
F(\exp_x(v)) = \exp_{F(x)}(dF_x(v))
\end{align*}
when $F:M \rightarrow M$ is an isometry and $v \in V$. Then let $U=\exp_x(V)$. 
 \end{proof}

We will also need the following basic fact about holomorphic maps.

\begin{lemma}\label{lem:fatou} Suppose $\Omega \subset \Cb^d$ is a bounded domain with $C^1$ boundary and $f : \Omega \rightarrow \Cb$ is a holomorphic map that extends continuously to $F:\partial\Omega \rightarrow \Cb$. If $F^{-1}(0)$ has non-empty interior in $\partial \Omega$, then $f$ is identically zero. 
\end{lemma}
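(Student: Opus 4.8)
The plan is to reduce the statement to a one-variable boundary uniqueness result by slicing $\Omega$ with complex lines transverse to $\partial\Omega$, and then to fill up an open subset of $\Omega$ on which $f$ vanishes. Fix a point $p$ lying in the interior of $F^{-1}(0)$ relative to $\partial\Omega$, and an open set $W \subset F^{-1}(0)$ of $\partial\Omega$ containing $p$. Since $\partial\Omega$ is $C^1$, the real tangent space $T_p\partial\Omega$ has real codimension one, so I can choose a complex direction $\nu \in \Cb^d$ (for instance the complex normal) with $\Cb\nu \not\subset T_p\partial\Omega$. For each $p'$ near $p$ in $W$ the complex line $p' + \Cb\nu$ then meets $\partial\Omega$ transversally at $p'$, so, writing $z = p' + w\nu$, the component $D_{p'}$ of $\{ w : \abs{w} < r, \ p'+w\nu \in \Omega\}$ with $0 \in \partial D_{p'}$ is (for small $r$) a Jordan domain whose boundary contains a genuine $C^1$ arc $\gamma_{p'} = (p'+\Cb\nu)\cap\partial\Omega$ lying in $W \subset F^{-1}(0)$.

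First I would establish the one-variable statement: if $g$ is holomorphic on such a Jordan domain $D$, continuous on $\overline{D}$, and vanishes on a boundary arc $\gamma$, then $g \equiv 0$. By Carath\'eodory's theorem there is a conformal map $\psi: \Db \to D$ extending to a homeomorphism of the closures, so $\widetilde{g} = g\circ\psi$ lies in the disc algebra and vanishes on the image arc $\widetilde{\gamma} \subset \partial\Db$. Because $\partial\Db$ is a real-analytic curve and $\widetilde{g}$ is real (indeed zero) on $\widetilde{\gamma}$, the Schwarz reflection principle extends $\widetilde{g}$ holomorphically across $\widetilde{\gamma}$; the extension vanishes on $\widetilde{\gamma}$, which has a limit point, so $\widetilde{g}\equiv 0$ by the identity theorem and hence $g \equiv 0$. (Alternatively one may quote the classical uniqueness theorem of Privalov.) Applying this to $g(w) = f(p'+w\nu)$ on $D_{p'}$ shows that $f$ vanishes on the slice $\{ p'+w\nu : w \in D_{p'}\}$.

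Finally I would let $p'$ range over a neighborhood of $p$ in $W$. The image of the differential of $(p',w) \mapsto p' + w\nu$ at $(p,0)$ is $T_p\partial\Omega + \Cb\nu$, which equals $\Cb^d$ precisely because $\Cb\nu \not\subset T_p\partial\Omega$; hence this map is a submersion near $(p,0)$ and its image contains an open neighborhood of $p$ in $\Cb^d$. Intersecting with $\Omega$ produces a nonempty open subset $U \subset \Omega$ on which $f$ vanishes, and since $\Omega$ is connected the identity theorem for holomorphic functions of several variables forces $f \equiv 0$. The main obstacle is the one-variable boundary uniqueness step, and the subtlety there is that the arc $\gamma_{p'}$ is only $C^1$ and not analytic; transporting it to the analytic circle $\partial\Db$ via the Riemann map is exactly what makes Schwarz reflection available. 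A secondary point requiring care is verifying the transversality and Jordan-domain structure of the slices uniformly for $p'$ near $p$, which follows from the continuity of $p' \mapsto T_{p'}\partial\Omega$.
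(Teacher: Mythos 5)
Your proof is correct. The paper itself disposes of this lemma in one line by citing the Luzin--Privalov uniqueness theorem from Collingwood--Lohwater, which is a one-variable statement; so the slicing reduction you carry out explicitly (choosing a non-complex-tangential direction $\nu$, cutting $\Omega$ by the lines $p'+\Cb\nu$, and sweeping out an open subset of $\Omega$ via the submersion $(p',w)\mapsto p'+w\nu$) is exactly the part the paper leaves implicit, and you have done it carefully --- including the two genuine points of care, namely that the slice component $D_{p'}$ is a Jordan domain with a boundary arc inside $F^{-1}(0)$ (which follows from transversality and the implicit function theorem applied to a $C^1$ defining function restricted to the line), and that the union of slices fills an open subset of $\Omega$. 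Where you genuinely diverge is the one-variable step: instead of quoting Luzin--Privalov (which gives uniqueness from vanishing nontangential limits on a mere set of positive measure), you use Carath\'eodory's extension theorem to transport the problem to the disc and then Schwarz reflection across the image arc. This is more elementary and entirely sufficient here, since you have continuity up to the boundary and vanishing on a full arc rather than just a positive-measure set; the trade-off is only that your argument would not survive a weakening of the hypotheses to almost-everywhere boundary values, which the paper's citation would. Either route is valid for the lemma as stated.
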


\begin{proof} This is a simple consequence of the Luzin-Privalov theorem, see~\cite[Theorem 2.5]{CL1966}. \end{proof}

Now for the rest of the section, suppose that $\Omega \subset \Cb^d$ is a bounded pseudoconvex domain with $C^\infty$ boundary and finite type. Further assume that $\Lc(\Omega)$ contains at least two distinct points. Let $G$ and $N$ be the groups in Theorem~\ref{thm:main_ft}. 

\begin{lemma}\label{lem:finite_jets_1} With the notation above, for any $x \in \partial \Omega$ the map 
\begin{align*}
g \in N \rightarrow j_1(\partial\Omega, g,x) \in  {\rm Jet}_1(\Omega, x)
\end{align*}
is injective. 
\end{lemma}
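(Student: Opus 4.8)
The plan is to realize $N$ as a group of isometries of a fixed Riemannian metric on $\partial\Omega$ and then to combine the rigidity of isometries with the fact that elements of $N$ restrict to boundary values of biholomorphisms. First I would observe that $N$ is compact and, by Theorem~\ref{thm:Bell1}, acts smoothly on the compact manifold $\partial\Omega$; hence Lemma~\ref{lem:riem_1} produces an $N$-invariant Riemannian metric $g$ on $\partial\Omega$, so that every element of $N$ restricts to an isometry of $(\partial\Omega, g)$.

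Next I would reduce the injectivity assertion to a rigidity statement about a single element. Suppose $n_1, n_2 \in N$ satisfy $j_1(\partial\Omega, n_1, x) = j_1(\partial\Omega, n_2, x)$ and set $n = n_2^{-1} n_1 \in N$. Comparing $0$-jets gives $n(x) = x$, while the chain rule applied to the equal $1$-jets gives $d(n)_x = \id$. Thus $n$ and the identity are two isometries of $(\partial\Omega, g)$ with the same $1$-jet at $x$, so Lemma~\ref{lem:riem_2} yields a neighborhood $U$ of $x$ in $\partial\Omega$ on which $n$ agrees with the identity.

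Finally I would upgrade this local boundary agreement to global agreement on $\Omega$ using holomorphicity. Writing $n = (n_1, \dots, n_d)$ in the coordinates of $\Cb^d$, each difference $z \mapsto n_j(z) - z_j$ is holomorphic on $\Omega$ and, by Theorem~\ref{thm:Bell1}, extends continuously to $\overline{\Omega}$; its boundary values vanish on the nonempty open set $U \subset \partial\Omega$. Lemma~\ref{lem:fatou} then forces each such difference to be identically zero, so $n = \id$ on $\Omega$ and therefore $n_1 = n_2$, proving injectivity.

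The only genuinely analytic step, and the one I expect to be the crux, is this last passage from boundary data to the interior: the boundary regularity supplied by Theorem~\ref{thm:Bell1} is exactly what makes Lemma~\ref{lem:fatou} applicable, and it is the holomorphicity of the elements of $N$ that allows the vanishing on an open boundary patch to propagate to a global identity. Without it, the coincidence of $1$-jets at a single point would not suffice.
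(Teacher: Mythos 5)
Your argument is correct and is exactly the route the paper takes: its proof of this lemma is the one-line citation of Lemmas~\ref{lem:riem_1}, \ref{lem:riem_2}, and~\ref{lem:fatou}, and your write-up simply fills in the details (invariant metric, reduction to $n=n_2^{-1}n_1$ with trivial $1$-jet, local rigidity of isometries, then propagation to the interior via holomorphicity and the Luzin--Privalov lemma). No gaps.
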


\begin{proof} Since $N$ is a compact Lie group acting smoothly on $\partial \Omega$, this follows from Lemmas~\ref{lem:riem_1},~\ref{lem:riem_2}, and~\ref{lem:fatou}. \end{proof}

\begin{lemma} With the notation above, for any $x \in \Lc(\Omega)$ the map 
\begin{align*}
g \in \Aut(\Omega) \rightarrow j_2(\partial \Omega, g,x) \in  {\rm Jet}_2(\partial \Omega, x)
\end{align*}
is injective. 
\end{lemma}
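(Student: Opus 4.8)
The plan is to reduce the statement to a claim about $\Aut(\Bb_k)$, where the analogous $2$-jet determination is classical, by exploiting the almost direct product structure $\Aut(\Omega)=GN$ from Theorem~\ref{thm:main_ft} together with the $\rho$-equivariant diffeomorphism $F:\Lc(\Omega)\rightarrow\partial\Bb_k$ constructed in Section~\ref{subsec:equiv_maps}. Suppose $g_1,g_2\in\Aut(\Omega)$ satisfy $j_2(\partial\Omega,g_1,x)=j_2(\partial\Omega,g_2,x)$ for some $x\in\Lc(\Omega)$. Setting $g=g_2^{-1}g_1$, it suffices to show that if $j_2(\partial\Omega,g,x)=j_2(\partial\Omega,\id,x)$ then $g=\id$; in particular $g$ fixes $x$ and acts as the identity to second order there.

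First I would write $g=hn$ with $h\in G$ and $n\in N$, using $GN=\Aut(\Omega)$. The idea is to transport the problem to $\partial\Bb_k$ via $F$. Because $F$ is a diffeomorphism conjugating the $G$-action on $\Lc(\Omega)$ to the $\Aut(\Bb_k)$-action on $\partial\Bb_k$, and because the $2$-jet of $g$ along the submanifold $\Lc(\Omega)$ is determined by (and determines, through the equivariance) the corresponding $2$-jet data on $\partial\Bb_k$, the $G$-component $h$ must induce an element $\rho([h])\in\Aut(\Bb_k)$ whose $2$-jet at $w_0=F(x)$ agrees with that of the identity. Here I would invoke the classical fact, noted in the first remark after Corollary~\ref{cor:finite_jet}, that CR-automorphisms of $\partial\Bb_k$ are determined by their $2$-jets: hence $\rho([h])=\id$, so $h\in Z(G)$, which by Lemma~\ref{lem:commute_fix} acts trivially on $\Lc(\Omega)$. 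This shows $g$ and $n$ agree on all of $\Lc(\Omega)$, reducing us to understanding the compact factor.

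The remaining task is to control $n\in N$. Since $g$ acts trivially on $\Lc(\Omega)$ to first order at $x$ and $h$ (being central) already acts trivially there, the element $n$ has trivial $1$-jet at $x$ in the directions tangent to $\Lc(\Omega)$; combined with Lemma~\ref{lem:finite_jets_1} — which gives injectivity of the $1$-jet map on the compact group $N$ — one concludes $n=\id$ provided the full $1$-jet of $n$ at $x$ is trivial. The key point to nail down is that the hypothesis $j_2(\partial\Omega,g,x)=j_2(\partial\Omega,\id,x)$, after subtracting off the central $G$-part, forces the entire $1$-jet of $n$ (not merely its restriction to $T_x\Lc(\Omega)$) to be trivial. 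I expect this to be the main obstacle: one must use the complex geometry of $\partial\Omega$ at $x$, specifically the interplay between $T_x\Lc(\Omega)$, the complex tangent space $T_x^{\Cb}\partial\Omega$, and the Levi form, to propagate triviality of the $2$-jet in the limit-set directions to triviality of the full $1$-jet in all of $T_x\partial\Omega$. Concretely, I would argue that an automorphism fixing $x$ with trivial $2$-jet along $\Lc(\Omega)$ must have differential at $x$ equal to a CR-automorphism of the ball boundary that is the identity on the relevant subspace and hence, by the rigidity of the Levi form under holomorphic maps, the identity on $T_x^{\Cb}\partial\Omega$ and on the transverse direction as well. Once the full differential $d(n)_x=\id$ is established, Lemma~\ref{lem:finite_jets_1} gives $n=\id$ and therefore $g=h\in Z(G)$, so $g_1=g_2$ modulo a central element; a final check (using that $Z(G)$ is finite and acts trivially on $\Lc(\Omega)$, together with $2$-jet triviality at $x$) rules out nontrivial central elements and completes the proof.
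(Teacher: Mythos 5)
Your overall route is the same as the paper's: decompose the element using $\Aut(\Omega)=GN$, transport the $2$-jet along the $\rho$-equivariant diffeomorphism $F:\Lc(\Omega)\rightarrow\partial\Bb_k$ to conclude that the $G$-part is central, and then finish with the $1$-jet injectivity on the compact factor (Lemma~\ref{lem:finite_jets_1}). The first half of your sketch is fine: since $n\in N$ commutes with $G$ it fixes $\Lc(\Omega)$ pointwise by Lemma~\ref{lem:commute_fix}, so $g|_{\Lc(\Omega)}=h|_{\Lc(\Omega)}$, the restricted $2$-jet along the invariant submanifold is trivial, and the classical $2$-jet determination on $\partial\Bb_k$ gives $\rho([h])=\id$, i.e.\ $h\in Z(G)$.

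The problem is the step you flag as ``the main obstacle'': you never carry out the proposed Levi-form argument that is supposed to upgrade triviality of the jet in the $\Lc(\Omega)$-directions to triviality of the full $1$-jet of $n$ on $T_x\partial\Omega$, so as written the proof is incomplete there. But that obstacle is illusory, and the Levi-form machinery is not needed. Since $Z(G)\leq N$ by the construction of $N$ in Section~\ref{subsec:almost_dir_ft}, the conclusion $h\in Z(G)$ already puts the \emph{whole} element $g=hn$ in $N$. The hypothesis $j_2(\partial\Omega,g,x)=j_2(\partial\Omega,\id,x)$ is a statement about $g$ as a self-map of the full boundary $\partial\Omega$, not merely of its restriction to $\Lc(\Omega)$, so it already contains the triviality of $j_1(\partial\Omega,g,x)$ on all of $T_x\partial\Omega$; there is nothing to propagate from the limit-set directions. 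Applying Lemma~\ref{lem:finite_jets_1} to $g$ itself (rather than trying to isolate $n$, whose $1$-jet you cannot control separately because a central $h$ may have nontrivial differential transverse to $\Lc(\Omega)$) gives $g=\id$ in one step, and your separate ``final check'' ruling out nontrivial central elements becomes unnecessary.
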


\begin{proof} It is enough to show that: if $\varphi \in \Aut(\Omega)$ and 
\begin{align*}
j_2(\partial \Omega, \varphi,x)=j_2(\partial \Omega, \id,x),
\end{align*}
then $\varphi = \id$. Now $\varphi = gk$ for some $g \in G$ and $k \in N$. By Lemma~\ref{lem:compact_center}, $k(x) = x$ for all $x \in \Lc(\Omega)$ and so 
\begin{align*}
j_2(\Lc(\Omega), g,x)=j_2(\Lc(\Omega), \varphi,x)=j_2(\Lc(\Omega), \id,x).
\end{align*}
Now there exists an isomorphism $\rho: G/Z(G) \rightarrow \Aut(\Bb_k)$ and a $\rho$-equivariant diffeomorphism $F: \Lc(\Omega) \rightarrow \partial \Bb_k$. So 
\begin{align*}
j_2(\partial \Bb_k, \rho(g),F(x))=j_2(\partial \Bb_k, \id,F(x)).
\end{align*}
Which implies that $\rho(g) = \id$ and hence $g \in Z(G)$. Since $Z(G) \leq N$ (by the construction of $N$), we then have that $\varphi \in N$. So by Lemma~\ref{lem:finite_jets_1} we see that $\varphi = \id$.
\end{proof}

\begin{lemma} With the notation above, for any $x \in \partial \Omega \backslash \Lc(\Omega)$ the map
\begin{align*}
g \in \Aut(\Omega) \rightarrow j_1(\partial\Omega, g,x) \in  {\rm Jet}_1(\Omega, x)
\end{align*}
is injective. 
\end{lemma}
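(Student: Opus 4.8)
The plan is to reduce to showing that any $\varphi \in \Aut(\Omega)$ fixing $x$ with $j_1(\partial\Omega,\varphi,x)=j_1(\partial\Omega,\id,x)$ must be the identity; this suffices, since for $g_1,g_2$ with equal $1$-jets at $x$ the element $\varphi=g_2^{-1}g_1$ fixes $x$ and has trivial $1$-jet there. The key observation is that the hypothesis $x \notin \Lc(\Omega)$ forces $\varphi$ to be elliptic, after which the argument of Lemma~\ref{lem:finite_jets_1} applies almost verbatim: an elliptic element lies in a compact subgroup, which carries an invariant Riemannian metric on $\partial\Omega$, and isometries agreeing to first order at a point agree near that point.

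First I would rule out that $\varphi$ is hyperbolic or parabolic. If $h$ is hyperbolic and $h(y)=y$ for some $y \in \partial\Omega$, then applying Corollary~\ref{cor:NS} to small disjoint neighborhoods of $\ell_h^+$ and $\ell_h^-$ shows that $h^n(y) \to \ell_h^+$ unless $y=\ell_h^-$, and likewise $h^{-n}(y) \to \ell_h^-$ unless $y=\ell_h^+$; since $h^n(y)=y$ for all $n$, the only boundary fixed points are $\ell_h^+,\ell_h^- \in \Lc(\Omega)$. The same reasoning with Corollary~\ref{cor:parabolic_dynamics} shows a parabolic $u$ fixes no boundary point other than $\ell_u^+ \in \Lc(\Omega)$. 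As $\varphi$ fixes $x \notin \Lc(\Omega)$, it is therefore elliptic.

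Since $\varphi$ is elliptic, every orbit $\{\varphi^n(z_0) : n \in \Zb\}$ is relatively compact in $\Omega$, and because $\Aut(\Omega)$ acts properly on $\Omega$ the cyclic group $\{\varphi^n : n \in \Zb\}$ is relatively compact in $\Aut(\Omega)$; let $H$ be its closure, a compact abelian subgroup acting smoothly on $\partial\Omega$ by Theorem~\ref{thm:Bell1}. By Lemma~\ref{lem:riem_1} there is an $H$-invariant Riemannian metric on $\partial\Omega$, for which $\varphi$ is an isometry. Since $j_1(\partial\Omega,\varphi,x)=j_1(\partial\Omega,\id,x)$, Lemma~\ref{lem:riem_2} produces a neighborhood $U$ of $x$ in $\partial\Omega$ with $\varphi|_U=\id|_U$. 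Finally, applying Lemma~\ref{lem:fatou} to each component of the holomorphic map $\varphi-\id$, which extends continuously to $\partial\Omega$ and vanishes on the nonempty open set $U$, yields $\varphi=\id$ on $\Omega$.

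The main obstacle is the passage from the pointwise fixed-point hypothesis to membership in a compact subgroup: the real content lies in showing that fixing a boundary point outside $\Lc(\Omega)$ excludes the hyperbolic and parabolic cases, which is precisely where the hypothesis $x \notin \Lc(\Omega)$ and the north--south and parabolic dynamics of Corollaries~\ref{cor:NS} and~\ref{cor:parabolic_dynamics} enter decisively. Once ellipticity is established, the remaining steps are exactly those used in the proof of Lemma~\ref{lem:finite_jets_1}, and notably this argument does not require the structure theorem for $\Aut(\Omega)$ at all.
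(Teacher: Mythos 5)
Your proof is correct, but it takes a genuinely different route from the paper's. The paper first shows that $\Aut(\Omega)$ acts \emph{properly} on the open manifold $M=\partial\Omega\setminus\Lc(\Omega)$ (via Theorem~\ref{thm:Bell2}: an escaping sequence of automorphisms pushes compact subsets of $M$ toward points of $\Lc(\Omega)$), and then invokes Palais's theorem to produce a single $\Aut(\Omega)$-invariant Riemannian metric on $M$, finishing with Lemmas~\ref{lem:riem_2} and~\ref{lem:fatou}. You instead reduce to a single element $\varphi=g_2^{-1}g_1$ fixing $x$ with trivial $1$-jet, use the north--south and parabolic dynamics of Corollaries~\ref{cor:NS} and~\ref{cor:parabolic_dynamics} to show that hyperbolic and parabolic automorphisms fix no boundary point outside $\Lc(\Omega)$ (so $\varphi$ is elliptic), and then run the compact-group averaging argument of Lemma~\ref{lem:finite_jets_1} on the closure of $\langle\varphi\rangle$. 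Both arguments are sound; yours is more elementary in that it avoids Palais's slice theorem entirely and makes explicit exactly where the hypothesis $x\notin\Lc(\Omega)$ enters (ruling out non-elliptic elements), while the paper's version yields the stronger and independently useful fact that the whole group acts properly on $\partial\Omega\setminus\Lc(\Omega)$, with one invariant metric serving all automorphisms at once.
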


\begin{proof} Let $M:=\partial \Omega \setminus \Lc(\Omega)$, We first observe that $\Aut(\Omega)$ acts properly on $M$. To see this assume for a contradiction that $\varphi_n \rightarrow \infty$ in $\Aut(\Omega)$, but there exists a compact subset $K \subset \partial M$ such that $K \cap \varphi_n(K) \neq \emptyset$. Now fix some $z_0 \in \Omega$. By passing to a subsequence we can assume that $\varphi_n(z_0) \rightarrow x \in \partial \Omega$ and $\varphi_n(z_0) \rightarrow y$. But then by Theorem~\ref{thm:Bell2}, $\varphi_n(z)$ converges locally uniformly to $x$ on $\overline{\Omega} \setminus \{y\}$. Since $x,y \in \Lc(\Omega)$ we then see that $K \cap \varphi_n(K) = \emptyset$ for $n$ large. So we have a contradiction. 

Then by a result of Palais~\cite[Theorem 4.3.1]{P1961}, there exists a $\Aut(\Omega)$-invariant metric $g$ on $M$. Then the result follows from Lemmas~\ref{lem:riem_2} and~\ref{lem:fatou}.

\end{proof}

\section{Tits alternative}\label{sec:tits_alternative}
 
 In this section we prove Corollary~\ref{cor:tits} from the introduction.  We will reduce to the following variant of the Tits' alternative.
 
 \begin{theorem}[Tits~\cite{T1972}]\label{thm:tits} Suppose $G$ is a Lie group with finitely many components and $H \leq G$ is a subgroup. Then either $H$ contains a free group or has a finite index solvable subgroup.
 \end{theorem}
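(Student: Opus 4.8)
The plan is to reduce the statement to the classical Tits alternative for \emph{linear} groups (which is the actual content of~\cite{T1972}) by passing to the adjoint representation. Write $\gL$ for the Lie algebra of $G$ and consider $\Ad \colon G \to \GL(\gL) \cong \GL_N(\Rb)$. The image $\overline{H} := \Ad(H)$ is then a subgroup of a general linear group over a field of characteristic zero, so Tits' theorem for linear groups applies: either $\overline{H}$ contains a nonabelian free group, or $\overline{H}$ has a finite index solvable subgroup (no finite generation hypothesis is needed in characteristic zero). It then remains to transfer each conclusion back through $\Ad$ to $H$ itself, the only difficulty being that $\Ad$ is not faithful; its kernel is $C_G(G_0)$, where $G_0$ denotes the identity component.

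First I would record the structure of this kernel. Since $G_0$ is normal in $G$, an element lies in $\ker \Ad$ exactly when it centralizes $G_0$, so $\ker \Ad = C_G(G_0)$. Now $C_G(G_0) \cap G_0 = Z(G_0)$ is abelian, while $C_G(G_0)/(C_G(G_0)\cap G_0)$ embeds into the finite group $G/G_0$. Hence $\ker \Ad$ is abelian-by-finite; in particular it contains the abelian subgroup $A := Z(G_0)$, which is even normal in all of $G$ since it is characteristic in $G_0$.

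In the free case, let $F \leq \overline{H}$ be a nonabelian free subgroup and put $\pi = \Ad|_H$. Since $\pi$ maps $H$ onto $\overline{H}$, choosing preimages of a free generating set of $F$ defines, by the universal property of free groups, a homomorphism $s \colon F \to H$ with $\pi \circ s = \id$; as $s$ is injective, $H$ contains a copy of $F$, and we are done. In the solvable case, let $S \leq \overline{H}$ be solvable of finite index and set $H' = \pi^{-1}(S)$, so that $[\,H : H'] = [\overline{H} : S] < \infty$. I claim $H'$ is virtually solvable, which suffices. Put $A_H := H' \cap A$, an abelian subgroup normal in $H'$. Since $A \subseteq \ker \Ad$, the quotient $H'/A_H$ maps onto $S$ with kernel $Q_0 := (H' \cap \ker \Ad)/A_H$, and $Q_0$ embeds into the finite group $C_G(G_0)/Z(G_0)$, hence is finite. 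As $Q_0$ is a finite normal subgroup of $H'/A_H$, its centralizer there has finite index and is a central extension of a solvable subgroup of $S$ by the finite abelian group $Z(Q_0)$, hence solvable. Pulling this finite index solvable subgroup back to $H'$ yields a finite index subgroup that is an extension of the abelian group $A_H$ by a solvable group, hence solvable. Therefore $H'$, and so $H$, is virtually solvable.

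The main obstacle is precisely this last step: because $\Ad$ is not faithful one must control $\ker \Ad$. The key points that make this work are that the kernel is the centralizer of the identity component, that it is abelian-by-finite with its abelian part $Z(G_0)$ normal in $G$, and that virtual solvability is preserved under the two relevant extensions (finite-by-solvable and abelian-by-solvable groups are virtually solvable).
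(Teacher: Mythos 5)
Your proposal is correct. Note, however, that the paper offers no proof of this statement to compare against: it is quoted as Theorem~\ref{thm:tits} with a citation to Tits' 1972 paper and used as a black box in Section~\ref{sec:tits_alternative}. What you have written is the standard (and complete) derivation of the Lie-group version from the linear Tits alternative: pass to $\Ad \colon G \to \GL(\gL)$, apply Tits' theorem in characteristic zero (where no finite generation is needed) to $\Ad(H)$, lift a free subgroup by choosing preimages of free generators, and in the solvable case control $\ker \Ad = C_G(G_0)$, which you correctly identify as abelian-by-finite with abelian part $Z(G_0)$ normal in $G$. The two extension steps (finite-by-solvable is virtually solvable via the centralizer of the finite normal subgroup, and abelian-by-solvable is solvable) are handled correctly, so the argument is a valid self-contained proof of the cited result.
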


 For the rest of the section suppose that $\Omega$ is a bounded pseudoconvex domain with real analytic boundary and $H \leq \Aut(\Omega)$ is a subgroup. We claim that either $H$ contains a free group or a finite index solvable subgroup. Since every bounded pseudoconvex domain with real-analytic boundary is of finite type (see Remark~\ref{rmk:real_analytic}), we can apply Theorem~\ref{thm:main_ft}.
 
 Now $\Aut(\Omega)$ is a Lie group. If $\Aut(\Omega)$ is compact, then it has finitely many components. So we can apply Theorem~\ref{thm:tits}. If $\Aut(\Omega)$ is non-compact, then $\Lc(\Omega)$ is non-empty. If $\Lc(\Omega)$ contains at least two points, then $\Aut(\Omega)$ has finitely many components by Theorem~\ref{thm:main_ft}. So we can apply Theorem~\ref{thm:tits} again. 
 
 It remains to consider the case in which $\Lc(\Omega)=\{x_0\}$. Then $\Aut(\Omega)$ fixes  $x_0 \in \partial \Omega$. Let $J_k(\partial\Omega;x_0)$ denote the group of $k$-jets of smooth maps $f: \partial\Omega \rightarrow \partial \Omega$ with $f(x_0)=x_0$. Then by~\cite[Theorem 5]{BER2000}, there exists some $N$ and such that the induced homomorphism $\iota:\Aut(\Omega) \rightarrow J_N(\partial \Omega;x_0)$ is injective. Further, $J_N(\partial \Omega; x_0)$ is a Lie group with finitely many components so we can apply Theorem~\ref{thm:tits} again.

\appendix 

 \section{Semisimple Lie groups and symmetric spaces}\label{sec:basic_properties}

In the proof of Theorem~\ref{thm:main_ft}, we use some basic properties about semisimple Lie groups and the symmetric spaces they act on. In this section we recall these properties and give references. 

For the rest of the section we make the following assumption.

\begin{assumption} $G$ is a connected semisimple Lie group with finite center. \end{assumption}

Let $\gL$ be the Lie algebra of $G$. Then there is a Lie algebra decomposition 
\begin{align*}
\gL=\gL_1 \oplus \dots \oplus \gL_n
\end{align*}
into simple Lie subalgebras, see for instance~\cite[Chapter 1, Theorem 1.54]{K2002}. Then let $G_i$ be the connected subgroup of $G$ generated by $\exp(\gL_i)$. 

\begin{lemma} Each  $G_i$ is a closed subgroup of $G$ and $G$ is the almost direct product of $G_1,\dots, G_n$. \end{lemma}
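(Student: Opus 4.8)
The plan is to verify the three requirements in the definition of almost direct product—pairwise commuting factors, finite pairwise intersections, and the decomposition $G=G_1\cdots G_n$—and then to establish closedness of each $G_i$, which is the only substantial point. First I would observe that each $\gL_i$ is an ideal, so for $i\neq j$ we have $[\gL_i,\gL_j]\subseteq \gL_i\cap\gL_j=0$; thus distinct summands commute. Since $G_i$ is generated by $\exp(\gL_i)$ and the identity $\exp(X)\exp(Y)\exp(-X)=\exp(e^{\operatorname{ad}X}Y)=\exp(Y)$ holds whenever $[X,Y]=0$, the generators of $G_i$ commute with those of $G_j$, and hence $G_i$ and $G_j$ commute. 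Because the factors commute pairwise, the set $G_1\cdots G_n$ is a subgroup; its differential at the identity is the sum map $\gL_1\oplus\dots\oplus\gL_n\to\gL$, which is surjective, so $G_1\cdots G_n$ contains a neighborhood of the identity and is therefore open. As $G$ is connected, this forces $G=G_1\cdots G_n$.

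For the finite intersections, I would argue that if $x\in G_i\cap G_j$ with $i\neq j$, then $x\in G_i$ commutes with every $G_k$ for $k\neq i$, while $x\in G_j$ commutes with every $G_k$ for $k\neq j$; combining these, $x$ commutes with all the factors and hence with $G=G_1\cdots G_n$. Thus $G_i\cap G_j\subseteq Z(G)$, which is finite by the standing assumption. (The same computation shows $Z(G_i)\subseteq Z(G)$, so each $G_i$ has finite center, though this is not strictly needed below.)

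The main difficulty is the closedness of each $G_i$, and I would extract it from the adjoint representation $\Ad:G\to\GL(\gL)$, whose kernel is $Z(G)$. Since $[\gL_i,\gL_j]=0$ for $j\neq i$, the operator $\operatorname{ad}(\gL_i)$ annihilates the complementary summand, so $\Ad(G_i)$ acts on $\gL_i$ by inner automorphisms and trivially elsewhere; that is, $\Ad(G_i)={\rm Inn}(\gL_i)$, viewed inside $\GL(\gL_i)\subseteq\GL(\gL)$. Because $\gL_i$ is semisimple, ${\rm Inn}(\gL_i)$ is the identity component of the real algebraic group $\Aut(\gL_i)$ and is therefore closed in $\GL(\gL_i)$, hence in $\GL(\gL)$. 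Consequently $\Ad^{-1}(\Ad(G_i))=G_i Z(G)$ is closed in $G$. Finally, since $Z(G)$ is finite, $G_iZ(G)$ is a Lie group of dimension $\dim\gL_i$, and the connected subgroup $G_i\subseteq (G_iZ(G))^0$ has the same dimension; by uniqueness of analytic subgroups $G_i=(G_iZ(G))^0$, the identity component, which is open and closed in $G_iZ(G)$ and therefore closed in $G$. The delicate ingredient is precisely the closedness of ${\rm Inn}(\gL_i)$ in $\GL(\gL_i)$, for which I would cite the standard fact that the inner automorphism group of a semisimple Lie algebra is the (closed) identity component of its automorphism group.
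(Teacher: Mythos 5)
Your proof is correct, and the first three ingredients (pairwise commutation from $[\gL_i,\gL_j]=0$, the surjectivity of the multiplication map near the identity giving $G=G_1\cdots G_n$ on a connected group, and pairwise intersections landing in the finite center) coincide with the paper's argument, which phrases the local statement via the Campbell--Baker--Hausdorff formula. Where you genuinely diverge is on closedness, the only delicate point. The paper stays inside $G$: given $g\in\overline{G_1}$, it writes $g=g_1\cdots g_n$ using the already-established product decomposition, observes that $g_1^{-1}g\in\overline{G_1}\cap(G_2\cdots G_n)$ centralizes all of $G$ and hence lies in $Z(G)$, concludes $\overline{G_1}\subseteq Z(G)G_1$, and finishes by noting that a connected group cannot properly contain a connected finite-index subgroup. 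You instead push everything through $\Ad$, identify $\Ad(G_i)$ with ${\rm Int}(\gL_i)$ acting on the $i$-th summand, and import the fact that ${\rm Int}(\gL_i)=\Aut(\gL_i)^0$ is closed because $\Aut(\gL_i)$ is a real algebraic group; pulling back and passing to the identity component of $G_iZ(G)$ then gives closedness. Both are sound: the paper's route is more self-contained and uses only the finiteness of $Z(G)$ plus connectedness, while yours leans on a standard but heavier external fact about automorphism groups of semisimple Lie algebras, in exchange for an argument that does not need the full product decomposition $G=G_1\cdots G_n$ as an input to the closedness step. One small point worth making explicit in your version is why $\dim(G_iZ(G))=\dim\gL_i$; the cleanest way is to note that if $\exp(tX)\in G_iZ(G)$ for all $t$ then $\ad(X)\in\ad(\gL_i)$, and $\ad$ is injective since $\gL$ has trivial center, so the Lie algebra of the closed group $G_iZ(G)$ is exactly $\gL_i$.
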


\begin{proof} This is a well known fact, but here is a proof. By the Campbell-Baker-Hausdorff formula (see~\cite[Appendix B, Section 4]{K2002}) distinct pairs of $G_1,\dots, G_n$ commute. So distinct pairs of $G_1,\dots, G_n$ have intersection in $Z(G)$ and hence have finite intersection. The Campbell-Baker-Hausdorff formula also implies that the map 
\begin{align*}
(X_1,\dots, X_n) \in \gL_1 \oplus \dots \oplus \gL_n \rightarrow \exp(X_1)\exp(X_2)\dots\exp(X_n) \in G
\end{align*}
is a local diffeomorphism at $0$. So the product $G_1 \cdots G_n$ contains a open neighborhood of $\id$ in $G$. Since $G_1 \cdots G_n$ is a connected subgroup, this implies that $G=G_1 \cdots G_n$. Thus $G$ is the almost direct product of $G_1,\dots, G_n$.

Next we show that each $G_i$ is closed. Suppose that $g \in \overline{G_1}$. Then $g=g_1\dots g_n$ for some $g_j \in G_j$. So $g_1^{-1} g \in \overline{G_1} \cap  (G_2 \cdots G_n)$.  Thus $g_1^{-1} g \in Z(G)$. Since $g \in \overline{G_1}$ was arbitrary, we see that $\overline{G_1}\subset Z(G)G_1$ and in particular that $G_1$ has finite index in $\overline{G_1}$. Since $\overline{G_1}$ and $G_1$ are both connected, this implies that $G_1=\overline{G_1}$. Applying the same argument to the other factors shows that each $G_i$ is closed. 
\end{proof}

We now make an additional assumption:

\begin{addassumption} Every $G_i$ is non-compact. \end{addassumption}

Next let $\Ad: G \rightarrow SL(\gL)$ denote the adjoint representation. The kernel of $\Ad$ is the center of $G$, denoted $Z(G)$, so we have an isomorphism $G/Z(G) \cong \Ad(G)$. 

\begin{definition}
We then say an element $g \in G$ is:
\begin{enumerate}
\item \emph{semisimple} if $\Ad(g)$ is diagonalizable in $\SL(\gL^{\Cb})$,
\item \emph{hyperbolic} if $\Ad(g)$ is diagonalizable in $\SL(\gL)$ with all positive eigenvalues,
\item \emph{unipotent} if $\Ad(g)$ is unipotent in $\SL(\gL)$, and
\item \emph{elliptic} if $\Ad(g)$ is elliptic in $\SL(\gL)$.
\end{enumerate}
\end{definition}

Since $G$ is semisimple, every element can be decomposed into a product of a elliptic, hyperbolic, and unipotent element. More precisely:

\begin{theorem}[Jordan Decomposition]\label{thm:jordan_decomp} If $g \in G$, then there exists $g_e, g_h, g_u \in G$ such that 
\begin{enumerate}
\item $g=g_eg_hg_u$,
\item $g_e \in G$ is elliptic, $g_h \in G$ is hyperbolic, $g_u \in G$ is unipotent, and 
\item $g_e,g_h,g_u$ commute. 
\end{enumerate}
Moreover, the $g_e,g_h,g_u$ are unique up to factors in $\ker \Ad = Z(G)$.
\end{theorem}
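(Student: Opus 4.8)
The plan is to descend to $\Ad(G)$, perform the decomposition there by the multiplicative Jordan decomposition of matrices, and then lift canonically to $G$, using the finiteness of $Z(G)$ to force the lifts to commute. Throughout write $\overline{g} = \Ad(g)$, so that $g$ is elliptic, hyperbolic, or unipotent exactly when $\overline{g}$ is, and recall $\ker \Ad = Z(G)$ is finite. At the level of $GL(\gL)$, every $A$ admits a unique real Jordan decomposition $A = E H U$ into commuting factors with $E$ elliptic (eigenvalues of modulus one), $H$ hyperbolic (diagonalizable over $\Rb$ with positive eigenvalues), and $U$ unipotent: one first takes the semisimple-unipotent decomposition over $\gL^{\Cb}$ and then splits the semisimple factor according to the moduli and arguments of eigenvalues, a splitting that is compatible with complex conjugation and hence real. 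Applying this to $A = \overline{g}$, and using that $\overline{g}$ lies in the real algebraic group $\Aut(\gL)$ (of which $\Ad(G) = \mathrm{Inn}(\gL)$ is the identity component, since $\gL$ is semisimple), the stability of the real Jordan components inside a real algebraic group shows that $H$ and $U$ again lie in $\Aut(\gL)$.

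Next I would produce canonical lifts. Since $H \in \Aut(\gL)$ is semisimple, the real one-parameter group $H^t$ also lies in $\Aut(\gL)$, and differentiating the automorphism identity at $t=0$ shows $\log H \in \mathrm{Der}(\gL)$; likewise $\log U \in \mathrm{Der}(\gL)$. Because $\gL$ is semisimple, $\mathrm{ad}\colon \gL \to \mathrm{Der}(\gL)$ is an isomorphism, so there are unique $X, Y \in \gL$ with $\mathrm{ad} X = \log H$ and $\mathrm{ad} Y = \log U$, where $X$ is hyperbolic and $Y$ is nilpotent. Set $g_h = \exp X$ and $g_u = \exp Y$, hyperbolic and unipotent elements of $G$ with $\Ad(g_h) = H$ and $\Ad(g_u) = U$. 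From $[\mathrm{ad} X, \mathrm{ad} Y] = \log H \cdot \log U - \log U \cdot \log H = 0$ we get $[X,Y] \in Z(\gL) = 0$, so $g_h$ and $g_u$ commute. Finally put $g_e := g\, g_u^{-1} g_h^{-1}$, so that $g = g_e g_h g_u$ holds exactly; since $\Ad(g_e) = \overline{g}\, U^{-1} H^{-1} = E$ is elliptic, $g_e$ is elliptic.

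It remains to check that $g_e$ commutes with $g_h$ and $g_u$, and here the finiteness of the center is decisive. For each $t$ the element $g_e \exp(tX) g_e^{-1}\exp(-tX)$ maps under $\Ad$ to $E\, \Ad(\exp tX)\, E^{-1}\, \Ad(\exp tX)^{-1} = \mathrm{Id}$, because $E$ commutes with $\Ad(\exp tX) = \exp(t\log H)$; hence it lies in $Z(G)$ for every $t$. Being a continuous $Z(G)$-valued path that starts at the identity, and $Z(G)$ being finite and thus discrete, it is constant equal to the identity, so $g_e g_h g_e^{-1} = g_h$. The same argument with $Y$ gives $g_e g_u g_e^{-1} = g_u$. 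This establishes existence. For uniqueness, any decomposition $g = g_e' g_h' g_u'$ into commuting elliptic, hyperbolic, and unipotent factors maps under $\Ad$ to a real Jordan decomposition of $\overline{g}$, which is unique; hence $\Ad(g_e') = E$, $\Ad(g_h') = H$, $\Ad(g_u') = U$, so the two decompositions agree modulo $\ker\Ad = Z(G)$.

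The main obstacle is the input from the theory of algebraic groups, namely that the hyperbolic and unipotent parts of $\overline{g}$ remain inside $\Aut(\gL)$ (equivalently, that $\log H$ and $\log U$ are derivations of $\gL$); this is where one genuinely uses that $\Ad(G)$ has algebraic structure. Once that is granted, the return to $G$ and the verification of commutativity are elementary, resting only on $\gL$ being centerless and $Z(G)$ being finite and hence discrete.
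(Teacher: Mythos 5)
Your argument is correct, and it is worth noting that the paper itself gives no proof here: it simply cites Eberlein \cite[Theorem 2.19.24]{E1996}. What you have written is essentially the standard argument behind that citation: pass to $\Ad(g)$ in the real algebraic group $\Aut(\gL)$, take the real multiplicative Jordan decomposition there, lift the hyperbolic and unipotent parts via $\mathrm{Der}(\gL)=\mathrm{ad}(\gL)$ (using that $\gL$ is semisimple and centerless), define the elliptic part as the remaining factor, and use discreteness of the finite center to promote commutation relations that hold after applying $\Ad$ to genuine commutation in $G$; uniqueness then descends from uniqueness of the Jordan decomposition in $\GL(\gL)$. The one step you do not prove --- that the hyperbolic and unipotent components of an element of the real algebraic group $\Aut(\gL)$, together with the one-parameter group $H^t$, remain inside $\Aut(\gL)$ --- is exactly the algebraic-group input that any treatment of this theorem must import, and you correctly isolate it as such; everything you build on top of it (the injectivity of $\mathrm{ad}$, the polynomial expressions for $\log H$ and $\log U$ guaranteeing the needed commutativity, and the continuity-plus-discreteness argument for $g_e$) is sound. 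Two cosmetic points: the sentence ``Since $H\in\Aut(\gL)$ is semisimple, the real one-parameter group $H^t$ also lies in $\Aut(\gL)$'' should invoke hyperbolicity rather than mere semisimplicity (a semisimple element with negative eigenvalues has no real one-parameter group through it), and the uniqueness paragraph should note explicitly that the images $\Ad(g_e'),\Ad(g_h'),\Ad(g_u')$ commute and have the required spectral types, so that they do constitute a real Jordan decomposition of $\Ad(g)$ --- both are immediate from what you already have.
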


\begin{proof} See for instance~\cite[Theorem 2.19.24]{E1996}. \end{proof}

A subgroup $A \leq G$ is called a \emph{Cartan subgroup} if $A$ is closed, connected, abelian, and every element in $A$ is hyperbolic. The \emph{real rank of $G$}, denoted by ${\rm rank}_{\Rb}(G)$, is defined to be
\begin{align*}
{\rm rank}_{\Rb}(G) = \max \{ \dim A: A \text{ is a Cartan subgroup of } \Ad(G)\}.
\end{align*}
We will need the following fact about Cartan subgroups.

\begin{proposition}\label{prop:cartan_conj} If $g \in G$ is hyperbolic and $A \leq G$ is a maximal Cartan subgroup, then $g$ is conjugate to an element of $Z(G)A$. 
\end{proposition}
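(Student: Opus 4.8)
The plan is to linearize $g$ through the adjoint representation and thereby reduce the statement to a conjugacy result inside $\gL$. Since $g$ is hyperbolic, $\Ad(g) \in \SL(\gL)$ is diagonalizable over $\Rb$ with all eigenvalues positive, so its real logarithm $D := \log \Ad(g)$ is a well-defined $\Rb$-diagonalizable endomorphism of $\gL$, whose eigenvalues are the real logarithms of those of $\Ad(g)$. First I would check that $D$ is actually a derivation of $\gL$: the element $\Ad(g)$ lies in the real algebraic group $\Aut(\gL)$, whose Lie algebra is $\operatorname{Der}(\gL)$, and the logarithm of a semisimple element of such a group having positive real eigenvalues lies in its Lie algebra. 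As $\gL$ is semisimple we have $\operatorname{Der}(\gL) = \operatorname{ad}(\gL)$ and $\operatorname{ad}$ is injective (since $\gL$ has trivial center), so there is a unique $X \in \gL$ with $\operatorname{ad}(X) = D$. Then $\Ad(\exp X) = e^{\operatorname{ad} X} = e^{D} = \Ad(g)$, whence $g \exp(-X) \in \ker \Ad = Z(G)$ and $g \in Z(G)\exp(X)$.

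Next I would identify $X$ as a hyperbolic element of $\gL$, meaning $\operatorname{ad}(X) = D$ is $\Rb$-diagonalizable. The maximal Cartan subgroup $A$ is connected, abelian, and consists of hyperbolic elements, so its Lie algebra $\aL$ is an abelian subalgebra on which all the operators $\operatorname{ad}$ are simultaneously $\Rb$-diagonalizable; by the maximality of $A$, the subalgebra $\aL$ is a maximal abelian subspace of $\pL$ for a Cartan decomposition $\gL = \kL \oplus \pL$. The crucial input is then the standard structure-theoretic fact that every hyperbolic element of $\gL$ is $\Ad(G)$-conjugate into $\aL$ (see e.g.~\cite{K2002}); concretely, there is some $h \in G$ with $\Ad(h)X = Y \in \aL$. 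By injectivity of $\operatorname{ad}$ this gives $h \exp(X) h^{-1} = \exp(\Ad(h) X) = \exp(Y)$, and since $A$ is connected abelian with Lie algebra $\aL$ we have $\exp(Y) \in \exp(\aL) = A$.

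Finally, combining the two steps and using that $Z(G)$ is central,
\[
h g h^{-1} \in h\, Z(G) \exp(X)\, h^{-1} = Z(G)\, h \exp(X) h^{-1} = Z(G) \exp(Y) \subseteq Z(G) A,
\]
which is exactly the assertion. The main obstacle is the conjugacy statement of the second step: one must invoke (or reprove) that the union $\bigcup_{h \in G} \Ad(h)\aL$ exhausts all $\Rb$-diagonalizable elements of $\gL$. This rests on the conjugacy of maximal abelian subspaces of $\pL$ (equivalently, of maximal $\Rb$-split tori) together with the fact that any single hyperbolic $X$ lies in such a subspace for some Cartan involution. By contrast, the first step is essentially formal, once one knows that the real logarithm of a semisimple algebraic-group element with positive eigenvalues lands in the Lie algebra.
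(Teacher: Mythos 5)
Your argument is correct, but the comparison with the paper is lopsided: the paper offers no argument at all, its proof of Proposition~\ref{prop:cartan_conj} being the single line ``See for instance~\cite[Chapter IX, Theorem 7.2]{H2001}.'' What you have written is essentially a proof of that cited result, split into the two standard ingredients: (i) since $\Ad(g)$ is an automorphism of $\gL$ that is $\Rb$-diagonalizable with positive eigenvalues, its logarithm is a semisimple derivation, hence equals $\operatorname{ad}(X)$ for a unique hyperbolic $X \in \gL$, and then $g\exp(-X) \in \ker\Ad = Z(G)$; and (ii) every hyperbolic $X \in \gL$ is $\Ad(G)$-conjugate into the Lie algebra $\aL$ of $A$. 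Both steps are sound. The only points worth flagging are where you lean on unproved standard facts, which you honestly identify: step (ii) needs that $X$ lies in $\pL$ for \emph{some} Cartan decomposition together with the conjugacy of Cartan decompositions and of maximal abelian subspaces of $\pL$; and you implicitly invoke that same conjugacy theorem once more when you identify the Lie algebra of a maximal Cartan subgroup in the paper's sense (maximal closed connected abelian subgroup consisting of hyperbolic elements) with a maximal abelian subspace of $\pL$. Granting those facts (they are in Helgason or Eberlein), your proof is complete, and it has the advantage of making visible exactly which pieces of structure theory the proposition rests on, whereas the paper's citation conceals them.
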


\begin{proof} See for instance~\cite[Chapter IX, Theorem 7.2]{H2001}. \end{proof}

\begin{theorem}[Iwasawa Decomposition]\label{ref:iwasawa} If $A \leq G$ is a maximal Cartan subgroup, then there exists a compact subgroup $K \leq G$ such that $G = KAK$. \end{theorem}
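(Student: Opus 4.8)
The plan is to deduce the $KAK$ decomposition from the global Cartan decomposition of $G$ together with the fact that $\Ad(K)$ sweeps out all of $\pL$ from $\aL$. One could simply cite a standard reference such as~\cite[Chapter IX]{H2001}, but here is the argument I would give. Since all maximal Cartan subgroups are $G$-conjugate and the desired conclusion $G = KAK$ is conjugation-invariant (if $g_0 \in G$ then $g_0 K g_0^{-1}$ is again maximal compact and $g_0 A g_0^{-1}$ again a maximal Cartan subgroup), I may fix a Cartan involution $\theta$ of $\gL$ for which the Lie algebra $\aL$ of $A$ lies in the $(-1)$-eigenspace. Write $\gL = \kL \oplus \pL$ for the resulting decomposition, let $\Theta$ be the corresponding global involution, and set $K = G^{\Theta}$, a maximal compact subgroup with Lie algebra $\kL$. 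Because $A$ is a \emph{maximal} Cartan subgroup, $\dim A = \mathrm{rank}_{\Rb}(G)$, so $\aL$ is a maximal abelian subspace of $\pL$; and since $A$ is connected and abelian, $A = \exp(\aL)$. The global Cartan decomposition then writes every $g \in G$ as $g = k_1 \exp(X)$ with $k_1 \in K$ and $X \in \pL$.

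The crux is the conjugacy lemma $\Ad(K)\cdot \aL = \pL$: every $X \in \pL$ can be moved into $\aL$ by some $\Ad(k)$. To prove it I would fix a regular element $H_0 \in \aL$, meaning its centralizer in $\pL$ is exactly $\aL$, and for a given $X$ maximize the smooth function $k \mapsto B_{\theta}(\Ad(k)X, H_0)$ over the compact group $K$, where $B_{\theta}(\cdot,\cdot) = -B(\cdot, \theta\,\cdot)$ is the positive-definite form built from the Killing form $B$. If $k_0$ achieves the maximum and $X' = \Ad(k_0)X$, then differentiating in the direction $Y \in \kL$ gives $B_{\theta}([Y,X'], H_0) = 0$; using $\theta H_0 = -H_0$ and the $\ad$-invariance of $B$ this simplifies to $B(Y, [X',H_0]) = 0$ for all $Y \in \kL$. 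Since $[X',H_0] \in [\pL,\pL] \subseteq \kL$ and $B$ is negative definite on $\kL$, taking $Y = [X',H_0]$ forces $[X',H_0] = 0$, and regularity of $H_0$ then gives $\Ad(k_0)X \in \aL$.

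With the lemma the theorem is immediate: writing $g = k_1 \exp(X)$ and $X = \Ad(k)H$ with $H \in \aL$, we have $\exp(X) = \exp(\Ad(k)H) = k\exp(H)k^{-1}$, whence $g = (k_1 k)\exp(H)k^{-1} \in KAK$. Finally, every element of $A = \exp(\aL) \subseteq \exp(\pL)$ is hyperbolic, so $\Ad(A)$ is diagonalizable over $\Rb$ in $\SL(\gL)$, which is exactly the extra property invoked in Lemma~\ref{lem:dist_norm_est}. The main obstacle is the conjugacy lemma $\Ad(K)\cdot\aL = \pL$; the compactness-plus-maximization argument is the standard route past it, with the only delicate bookkeeping being the choice of $\theta$ adapted to $A$ (so that $\aL \subseteq \pL$) and the existence of a regular $H_0 \in \aL$.
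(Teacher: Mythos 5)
The paper states this result without proof, treating it as a standard fact about semisimple Lie groups with finite center (the neighboring results in the appendix are cited to Helgason and Eberlein), so there is no in-paper argument to compare against; your proposal supplies the standard textbook proof and it is correct. Note that despite the paper's label, what is being proved is the Cartan/polar ($KAK$) decomposition rather than the Iwasawa decomposition $G=KAN$, and your argument correctly targets the former: the global Cartan decomposition $G=K\exp(\pL)$ plus the conjugacy lemma $\Ad(K)\cdot\aL=\pL$, the latter established by maximizing $k\mapsto B_{\theta}(\Ad(k)X,H_0)$ over the compact group $K$ for a regular $H_0\in\aL$. Your critical-point computation is right ($B(Y,[X',H_0])=0$ for all $Y\in\kL$, with $[X',H_0]\in[\pL,\pL]\subseteq\kL$ and $B|_{\kL}$ negative definite forcing $[X',H_0]=0$), and the existence of a regular element of $\aL$ follows from $\aL$ being maximal abelian in $\pL$ (the singular set is a finite union of restricted-root hyperplanes). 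The one step you pass over quickly is the initial reduction: in this paper a ``Cartan subgroup'' is defined dynamically as a closed connected abelian subgroup of hyperbolic elements, so to choose $\theta$ with $\aL\subseteq\pL$ you need to know that the Lie algebra of such a subgroup, consisting of $\ad$-semisimple elements with real eigenvalues, can be conjugated into $\pL$ and that maximality then makes it a maximal abelian subspace of $\pL$; this is standard (and is essentially Proposition~\ref{prop:cartan_conj} of the paper), but it deserves the explicit mention since it is where the paper's nonstandard definition gets reconciled with the Lie-theoretic one. Your closing observation that $\Ad(A)$ is then $\Rb$-diagonalizable is exactly the extra property consumed in Lemma~\ref{lem:dist_norm_est}.
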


We now focus on the real rank one case. 

\begin{addassumption} ${\rm rank}_{\Rb}(G)=1$. \end{addassumption}

Since 
\begin{align*}
{ \rm rank}_{\Rb}(G) = \sum_{i=1}^n { \rm rank}_{\Rb}(G_i)
\end{align*}
this implies that $G$ is a simple Lie group. In addition, by the classification of simple Lie groups, $G$ is locally isomorphic to one of $\SO(k,1)$, $\SU(k,1)$, $\Sp(k,1)$, or $F^{-20}_{4}$. 

Now fix $K \leq G$ a maximal compact subgroup. Then the quotient manifold $X=G/K$ is diffeomorphic to $\Rb^{\dim X}$ and has a unique (up to scaling) non-positively curved $G$-invariant Riemannian metric $g$, see~\cite[Section 2.2]{E1996} for details. Let $d_X$ denote the distance induced by $g$.

\begin{remark} Clearly $Z(G) \leq K$ and so $Z(G)$ acts trivially on $X$. For this reason, in many of the references cited in this section the group $G$ is assumed to have trivial center. 
\end{remark}

In the rank one case, the associated symmetric space $(X,d_X)$ is either a real hyperbolic space, a complex hyperbolic space, a quaternionic hyperbolic space, or the Cayley-hyperbolic plane. In all these cases, $(X,d_X)$ is a negatively curved Riemannian manifold. For details see~\cite[Chapter 19]{M1973}. 

Since $X$ is a non-positively curved simply connected Riemannian manifold, there exists a compactification called the \emph{geodesic compactification} which can be defined as follows. Let $\Gc$ denote the set of unit speed geodesic rays $\sigma:[0,\infty) \rightarrow X$. Then we say two geodesics $\sigma_1, \sigma_2 \in \Gc$ are equivalent if 
\begin{align*}
\lim_{t \rightarrow \infty} d_X(\sigma_1(t), \sigma_2(t)) < \infty.
\end{align*}
Finally let $X(\infty) = \Gc / \sim$. This gives a compactification $\overline{X} = X \cup X(\infty)$ of $X$ as follows. First fix a point $x_0 \in X$. Since $X$ is non-positively curved, for any $x \in X$ there exists a unique geodesic segment $\sigma_x$ joining $x_0$ to $x$. We then say that a sequence $x_n \in X$ converges to a point $\sigma \in X(\infty)$ if the geodesic segments $\sigma_{x_n}$ converge locally uniformly to $\sigma$. This construction does not depend on the initial choice of $x_0$. See~\cite[Section 1.7]{E1996} for details. 

Since $G$ acts by isometries on $X$ and the construction of $X(\infty)$ is independent of base point, the action of $G$ on $X$ extends to an action on $X \cup X(\infty)$. For a general non-positively curved simply connected Riemannian manifold this action is only continuous, but for negatively curved symmetric spaces we have the following.

\begin{theorem} With the notation above, $\overline{X}$ has a smooth structure, with this structure $X(\infty)$ is diffeomorphic to a sphere of dimension $\dim X -1$, and the action of $G$ on $X$ extends to a smooth action on $X(\infty)$. \end{theorem}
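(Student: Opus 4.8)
The plan is to transport everything to an explicit ball model of $X$, where the smooth structure and the extended action are manifest, and then to identify the abstractly defined geodesic compactification with the boundary sphere of that ball.

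First I would fix a base point $x_0 \in X$ with $\mathrm{Stab}_G(x_0) = K$. Since $X$ is a simply connected Riemannian manifold of non-positive curvature, the Cartan--Hadamard theorem gives that $\exp_{x_0} : T_{x_0}X \to X$ is a diffeomorphism, so $X$ is diffeomorphic to $\Rb^N$ with $N = \dim X$. Because $G$ has real rank one, $X$ is two-point homogeneous, equivalently $K$ acts transitively on the unit sphere $S(T_{x_0}X) \cong S^{N-1}$. The unit-speed geodesic rays issuing from $x_0$ are thus parametrized by $S^{N-1}$, and strict negativity of the curvature guarantees that rays from $x_0$ in distinct directions diverge (hence are non-asymptotic), while every asymptote class has a unique representative issuing from $x_0$. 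Consequently the map $S(T_{x_0}X) \to X(\infty)$ sending a unit vector to the class of the corresponding ray is a bijection, identifying $X(\infty)$ with a sphere of dimension $\dim X - 1$.

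Next I would fix, using the classification recalled above, a realization of $X$ as an open ball $B \subset \Rb^N$ (the real, complex, or quaternionic hyperbolic ball, or the exceptional ball in $\Rb^{16}$) on which $G$ acts by transformations that extend real-analytically to the closed ball $\overline{B}$ -- projective transformations in the real, complex and quaternionic cases. In particular $G$ acts smoothly on $\partial B \cong S^{N-1}$, which furnishes $\overline{X} := \overline{B}$ with a smooth structure together with a smooth $G$-action extending the given action on $X$. It then remains to check that this boundary sphere is canonically the geodesic boundary: in the ball model each geodesic ray converges to a unique point of $\partial B$, two rays are asymptotic exactly when they share this endpoint, and the cone topology on $X \cup X(\infty)$ coincides with the subspace topology inherited from $\overline{B}$. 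These are the standard visibility properties of pinched negatively curved Hadamard manifolds, and they yield a $G$-equivariant homeomorphism $X(\infty) \xrightarrow{\sim} \partial B$ through which the smooth structure is transported, proving all three assertions.

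The hard part will be this last identification -- matching the intrinsically defined compactification (classes of asymptotic rays with the cone topology) to the concrete boundary of the ball model. This is precisely where negative, rather than merely non-positive, curvature is needed: it is what forces each ray to converge to a single boundary point and lets asymptote classes be distinguished by their endpoints. For the $G$-action, continuity on $X(\infty)$ is automatic for any Hadamard manifold, but smoothness genuinely uses the homogeneous rank-one structure; in the ball model it is immediate, while intrinsically one may instead observe that $X(\infty)$ is the compact homogeneous space $G/P$ with $P = \mathrm{Stab}_G(\xi_0)$ a closed (parabolic) subgroup, on which $G$ acts analytically, and then verify that the two smooth structures agree. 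The decomposition $G = KAK$ of Theorem~\ref{ref:iwasawa} reduces the smoothness check to $K$, which acts linearly through its isotropy representation on $S(T_{x_0}X)$, and to the one-parameter group $A$, which acts as a hyperbolic transvection extending smoothly to $\partial B$.
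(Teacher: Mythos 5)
Your proposal is correct and takes essentially the same route as the paper: the paper's proof is precisely the one-line appeal to the standard ball models of the negatively curved symmetric spaces (citing Mostow, Chapter~19), and your argument is a detailed expansion of exactly that — realize $X$ as a ball on which $G$ acts by transformations extending smoothly to the closed ball, then identify the geodesic boundary with the boundary sphere using visibility in negative curvature.
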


This theorem follows from considering the standard models of the negatively curved symmetric spaces, see~\cite[Chapter 19]{M1973}.

Although this will not be needed in the paper, it is worth observing the following fact about the action of hyperbolic elements. 

\begin{theorem} Suppose $h \in G$ is a hyperbolic element with $\Ad(h) \neq \id$. Then there exists distinct points $\omega_h^+, \omega_h^- \in X(\infty)$ such that $h(\omega^\pm_h) = \omega_h^\pm$. Further, If $U$ is a neighborhood of $\omega_h^+$ in $\overline{X}$ and $V$ is a neighborhood of $\omega_h^-$ in $\overline{X}$, then there exists some $N >0$ such that 
 \begin{align*}
h^n \left(\overline{X} \setminus V\right) \subset U \text{ and } h^{-n}\left(\overline{X} \setminus U\right) \subset V
 \end{align*}
 for all $n \geq N$.
\end{theorem}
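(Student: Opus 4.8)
The plan is to first reduce to the case where $h$ is an \emph{axial} isometry of $X$, i.e.\ translates a geodesic, and then to extract the North--South dynamics from the negative curvature of $X$. For the reduction, recall that $h$ is hyperbolic, so by Proposition~\ref{prop:cartan_conj} there are $g\in G$, a central element $z\in Z(G)$, and a maximal Cartan subgroup $A\le G$ with $g^{-1}hg = z\,a$ for some $a\in A$. Since $Z(G)\le K$ acts trivially on $X=G/K$ and $z$ is central, $h$ acts on $X$ exactly as $g a g^{-1}$. The orbit $A\cdot x_0$ of the base point is a maximal flat, which is a single geodesic $\gamma_0$ because $\mathrm{rank}_{\Rb}(G)=1$, and $A$ acts on it by translation (see~\cite[Section 2.2]{E1996}); hence $a$ translates $\gamma_0$, and $h$ translates the geodesic $\gamma := g\,\gamma_0$, say $h\,\gamma(t)=\gamma(t+T)$ with $T>0$ after relabelling. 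I set $\omega_h^+ = \lim_{t\to+\infty}\gamma(t)$ and $\omega_h^- = \lim_{t\to-\infty}\gamma(t)$; these are the two distinct endpoints of $\gamma$ in $X(\infty)$, and since $h$ preserves $\gamma$ together with its orientation it fixes each of them.

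The interior behaviour is then immediate: for $p\in X$ the point $h^n p$ lies within distance $d_X\big(p,\gamma(0)\big)$ of $\gamma(nT)$, and since $\gamma(nT)\to\omega_h^+$ while sequences at bounded distance converge to the same boundary point in the cone topology, we obtain $h^np\to\omega_h^+$, uniformly for $p$ in compact subsets of $X$ (the bound $d_X(p,\gamma(0))$ being uniform there). The real content is the uniform statement \emph{up to the repelling point}: that $h^n$ carries the entire compact set $\overline X\setminus V$ into an arbitrary neighbourhood $U$ of $\omega_h^+$. Fix a cone neighbourhood $C(\epsilon,R)\subset U$ of $\omega_h^+$ — the set of $y\in\overline X$ whose geodesic from $\gamma(0)$ makes angle $<\epsilon$ with the ray toward $\omega_h^+$ and, for interior $y$, has $d_X(\gamma(0),y)>R$; such cones form a neighbourhood basis at $\omega_h^+$. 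I would argue by contradiction: if the inclusion failed for every $N$, there would be $n_k\to\infty$ and $y_k\in\overline X\setminus V$ with $h^{n_k}y_k\notin C(\epsilon,R)$, and after passing to subsequences $y_k\to y_\infty\in\overline X\setminus V$ (so $y_\infty\neq\omega_h^-$) and $h^{n_k}y_k\to z_\infty\neq\omega_h^+$.

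The contradiction — and the main obstacle — comes from the negative curvature, which I would phrase through the Gromov product $(\,\cdot\,|\,\cdot\,)_{\gamma(0)}$ (equivalently through the Busemann function $b_{\omega_h^+}$, which satisfies $b_{\omega_h^+}\circ h = b_{\omega_h^+}-T$, so $h$ drives points into ever deeper horoballs at $\omega_h^+$; but horoballs alone control only the distance, not the direction, which is precisely why the boundary case is subtler than the interior one). Since $y_\infty\neq\omega_h^-$, the products $(y_k\,|\,\omega_h^-)_{\gamma(0)}$ stay bounded, and the translation $h\gamma(t)=\gamma(t+T)$ together with the $\delta$-hyperbolicity of the rank-one symmetric space $X$ forces $(h^{n_k}y_k\,|\,\omega_h^+)_{\gamma(0)}\to\infty$; yet convergence to $z_\infty\neq\omega_h^+$ keeps this product bounded, a contradiction. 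Applying the same argument to $h^{-1}$, whose attracting point is $\omega_h^-$, yields $h^{-n}(\overline X\setminus U)\subset V$ for large $n$. The delicate point throughout is securing \emph{uniform} rather than merely pointwise convergence while excluding only the single repelling point, and here I would lean on the standard North--South dynamics for axial isometries of negatively curved Hadamard manifolds (see~\cite{E1996} and the models in~\cite[Chapter 19]{M1973}) to keep the curvature estimates clean.
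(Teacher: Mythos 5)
Your argument is correct in substance, but note that the paper offers no proof of this statement at all: it is recorded in the appendix as a standard fact about rank-one symmetric spaces, explicitly flagged as not needed elsewhere in the paper, with the surrounding results referred to the models in \cite[Chapter 19]{M1973} and to \cite{E1996}. What you have written is essentially the standard proof that those references supply, and it holds together. Proposition~\ref{prop:cartan_conj} puts $h$ in $gZ(G)Ag^{-1}$, the central factor acts trivially on $X$ since $Z(G)\leq K$, and in rank one the flat $A\cdot x_0$ is a single geodesic translated by $A$, so $h$ is axial; North--South dynamics for axial isometries of a CAT($-1$) space then gives the uniform contraction. The one step you leave as an assertion --- that $(y\,|\,\omega_h^-)_{\gamma(0)}\leq C$ forces $(h^{n}y\,|\,\omega_h^+)_{\gamma(0)}=(y\,|\,\omega_h^+)_{\gamma(-nT)}\geq nT-C-O(\delta)$ --- does follow from two short computations: the Gromov inequality together with $(\gamma(-nT)\,|\,\omega_h^-)_{\gamma(0)}=nT$ yields $(y\,|\,\gamma(-nT))_{\gamma(0)}\leq C+\delta$ once $nT>C+\delta$, and expanding the definition of the product based at $\gamma(-nT)$ then gives the stated lower bound; so your contradiction is genuine and the convergence is uniform on $\overline{X}\setminus V$ as required. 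Two small caveats you should make explicit: the base point $x_0$ must be chosen so that the Cartan decomposition is adapted to $A$ (for an arbitrary base point $A\cdot x_0$ need not be totally geodesic), and the statement fails for $h\in Z(G)$, which is ``hyperbolic'' under the paper's definition because $\Ad(h)=\Id$ is trivially diagonalizable with positive eigenvalues; both you and the paper implicitly assume $\Ad(h)\neq \Id$, which is what guarantees $T>0$ in your normalization $h\gamma(t)=\gamma(t+T)$.
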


\subsection{Parabolic subgroups}\label{sec:parabolic} A subgroup $P \leq G$ is called a \emph{parabolic subgroup of $G$} if $P$ is the stabilizer of some $\xi \in X(\infty)$. Since $G$ has real rank one, $G$ acts transitively on $X(\infty)$, see for instance~\cite[Proposition 2.21.13]{E1996}, and so there is a natural identification of $X(\infty)$ and $G/P$. So $G/P$ is diffeomorphic to a sphere of dimension $\dim X -1$. 

In the proof of Theorem~\ref{thm:main_ft}, we use the following fact about parabolic subgroups. 

\begin{theorem}\label{thm:parabolic}
With the notation above,  if $P \leq G$ is a parabolic subgroup, then $P$ is a maximal subgroup of $G$, that is: if $H$ is a closed subgroup of $G$ and $P \leq H$, then either $H=P$ or $H=G$.
\end{theorem}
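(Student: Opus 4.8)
The plan is to argue on the boundary sphere $X(\infty)\cong G/P$ and exploit the two-orbit structure of the minimal parabolic $P$ that is special to the real rank one case. Since $G$ acts transitively on $X(\infty)$ with $\{g\in G: g\xi_0=\xi_0\}=P$, the orbit map identifies $X(\infty)$ with $G/P$, and under this identification $H$ acts on $X(\infty)$ with $H\cdot\xi_0=H/P$ (using $P\le H$). If I can show this orbit is all of $X(\infty)$, then $H$ is transitive on $G/P$, whence $G=HP=H$ (the last equality because $P\le H$). Thus the whole statement reduces to proving: \emph{if $P\subsetneq H$ is a closed subgroup, then $H\cdot\xi_0=X(\infty)$.}

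The key geometric input is the horospherical subgroup $U\le P$, namely the unipotent radical of $P$ (the horospherical subgroup fixing $\xi_0$): in the rank one setting $U$ acts transitively on $X(\infty)\setminus\{\xi_0\}$, since $X(\infty)$ is the one-point compactification of the big cell $U\cdot\eta\cong U$ for any $\eta\neq\xi_0$ (see \cite{E1996}). Granting this, the argument is immediate. Choose $g\in H\setminus P$; because $P$ is exactly the stabilizer of $\xi_0$, the point $\eta:=g\cdot\xi_0$ satisfies $\eta\neq\xi_0$ and lies in $H\cdot\xi_0$. As $U\le P\le H$ fixes $\xi_0$, it preserves $X(\infty)\setminus\{\xi_0\}$ and acts transitively there, so $U\cdot\eta=X(\infty)\setminus\{\xi_0\}\subseteq H\cdot\xi_0$. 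Combined with $\xi_0\in H\cdot\xi_0$ this yields $H\cdot\xi_0=X(\infty)$, completing the proof.

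The step I expect to be the real obstacle — and the only place the rank one hypothesis is used — is establishing that $U$ (equivalently $P$) has exactly two orbits on $X(\infty)$. This is standard for minimal parabolics of rank one groups but must be invoked carefully. A self-contained alternative is to argue algebraically: with the restricted root decomposition $\gL=\gL_{-2\lambda}\oplus\gL_{-\lambda}\oplus(\mathfrak{m}\oplus\aL)\oplus\gL_{\lambda}\oplus\gL_{2\lambda}$ (where $\dim\aL=1$ since $G$ has real rank one) and $\pL=(\mathfrak{m}\oplus\aL)\oplus\gL_{\lambda}\oplus\gL_{2\lambda}$, any subalgebra strictly containing $\pL$ contains $\aL$, is therefore stable under $\operatorname{ad}(\aL)$, and so must contain $\gL_{-\lambda}$ or $\gL_{-2\lambda}$; bracketing against $\gL_{\lambda},\gL_{2\lambda}$ and using simplicity of $\gL$ forces it to equal $\gL$, so $\pL$ is a maximal subalgebra. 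Then for closed $H\supseteq P$ one has $\mathfrak{h}\supseteq\pL$, hence either $\mathfrak{h}=\gL$ — forcing $H$ open and thus $H=G$ by connectedness of $G$ — or $\mathfrak{h}=\pL$, in which case $H\le N_G(\pL)=P$ and so $H=P$ (see \cite{H2001,K2002}). Either route finishes the argument; in both, the essential content is the rank one maximality of $\pL$.
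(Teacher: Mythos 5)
Your proof is correct, but it takes a genuinely different route from the paper's. The paper picks $h \in H \setminus P$, notes that $hPh^{-1} \leq H$ is the parabolic stabilizing $\eta = h\xi \neq \xi$, and invokes the fact that in rank one $hPh^{-1}$ and $P$ are \emph{opposite} parabolics whose product is dense in $G$ (citing Warner); since $H$ is closed and contains $hPh^{-1}P$, it equals $G$. Your first route instead shows that $H$ acts transitively on $X(\infty) \cong G/P$ via the transitivity of the horospherical subgroup $U \leq P$ on $X(\infty)\setminus\{\xi_0\}$, and then concludes $G = HP = H$ directly. Both arguments are ultimately instances of the Bruhat decomposition in rank one (two cells: the point $\xi_0$ and the dense open $U$-orbit), but your packaging buys something: it never uses that $H$ is closed, so it proves the stronger statement that \emph{any} subgroup strictly containing $P$ equals $G$, whereas the paper's density argument genuinely needs closedness. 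Your second, Lie-algebraic route reaches the same standard conclusion (maximality of $\pL$ in rank one) but is looser as written: $\ad(\aL)$-invariance of $\mathfrak{h}$ only yields $\mathfrak{h}\cap\gL_{-\lambda}\neq 0$ or $\mathfrak{h}\cap\gL_{-2\lambda}\neq 0$, not containment of an entire negative restricted root space, so an extra step is needed (e.g.\ irreducibility of the action of the centralizer of $\aL$ in $K$ on the restricted root spaces, or the classification of subalgebras containing a minimal parabolic, or an $\mathfrak{sl}_2$-triple argument using the Cartan involution). Since your first route is complete, this does not affect the overall correctness.
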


\begin{proof} Suppose $P$ is the stabilizer of some $\xi \in X(\infty)$ and that $H$ is a closed subgroup with $P \lneqq H \leq G$. Then there exist some $h \in H$ with $h\xi=\eta$ and $\eta \neq \xi$. Then $hPh^{-1} \leq H$ is the stabilizer of $\eta$. Since $G$ has real rank one, $hPh^{-1}$ and $P$ are opposite parabolic subgroups and so $hPh^{-1}P$ is dense in $G$, see~\cite[Proposition 1.2.4.10]{W1972}. So $H=G$.
 \end{proof}
 
 \begin{theorem}\label{thm:parabolic_decomp} With the notation above, suppose $h \in G$ is hyperbolic and $P$ is the stabilizer of $\omega_h^+ \in X(\infty)$, then for every $p \in P$ the limit 
 \begin{align*}
 \lim_{n \rightarrow \infty} h^{-n} p h^n
 \end{align*}
exists in $G$. 
 \end{theorem}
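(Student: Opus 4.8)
The plan is to reduce the claim to an explicit computation in the restricted root space decomposition of $\gL$. First I would note that the assertion is invariant under conjugation: if $g \in G$ and $h' = ghg^{-1}$, then $\omega_{h'}^+ = g\omega_h^+$, the stabilizer of $\omega_{h'}^+$ is $gPg^{-1}$, and $(h')^{-n}(gpg^{-1})(h')^n = g\left(h^{-n}ph^n\right)g^{-1}$, so the limit exists for every element of $gPg^{-1}$ if and only if it exists for every $p \in P$. Since $h$ is hyperbolic, Proposition~\ref{prop:cartan_conj} then lets me assume, after such a conjugation, that $h = z\exp(H_0)$ for some $z \in Z(G)$ and $H_0 \in \aL$, where $A = \exp(\aL)$ is a maximal Cartan subgroup.

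Next I would invoke the structure theory of the rank one group $G$. Let $\gL = \mathfrak{m} \oplus \aL \oplus \bigoplus_{j} \gL_{j\alpha}$ be the restricted root space decomposition with respect to $\aL$; since $\mathrm{rank}_{\Rb}(G) = 1$, the nonzero restricted roots are $\pm\alpha$ (and possibly $\pm 2\alpha$), and $\mathfrak{m}$ is the Lie algebra of $M = Z_K(A)$. Setting $\mathfrak{n} = \gL_\alpha \oplus \gL_{2\alpha}$ and $N = \exp(\mathfrak{n})$, the associated minimal parabolic is $MAN$, and the product map $M \times A \times N \to MAN$ is a diffeomorphism. The key algebraic fact is that $h$ commutes with all of $MA$: indeed $z$ is central, $A$ is abelian, and $M$ centralizes $A$ and hence $\exp(H_0)$. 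Geometrically, $M$ and $A$ fix both endpoints of the axis $t \mapsto \exp(tH_0)\cdot x_0$, and normalizing the sign of $\alpha$ so that $\alpha(H_0) > 0$ makes $MAN$ exactly the stabilizer of the forward endpoint; since $\exp(nH_0)\cdot x_0 \to \omega_h^+$ as $n \to \infty$, this forward endpoint is $\omega_h^+$, so $P = MAN$. This last identification—matching the geometrically defined attracting fixed point $\omega_h^+$ with the positive chamber, equivalently checking that conjugation by $h^{-1}$ (not by $h$) is the contraction on the nilradical—is the only delicate point, and I would pin it down by comparing the axis dynamics with the standard upper-half-space model as in the case of $\SL(2,\Rb)$.

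Given this, the computation is short. Any $p \in P$ factors uniquely as $p = ma\,u$ with $m \in M$, $a \in A$, $u \in N$, and since $h$ commutes with $m$ and $a$ we obtain $h^{-n}ph^n = ma\,\left(h^{-n}uh^n\right)$. Writing $u = \exp(Y)$ with $Y \in \mathfrak{n}$ and using $\Ad(z) = \id$, the eigenvalue of $\Ad(h^{-1}) = \Ad(\exp(-H_0))$ on $\gL_{j\alpha}$ is $e^{-j\alpha(H_0)} < 1$ for $j \in \{1,2\}$, so
\begin{align*}
h^{-n}uh^n = \exp\!\left(\Ad(h^{-n})Y\right) \longrightarrow \exp(0) = \id
\end{align*}
as $n \to \infty$. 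Therefore $\lim_{n\to\infty} h^{-n}ph^n = ma$ exists for every $p \in P$, which proves the theorem; I expect the main obstacle to be the sign/chamber identification flagged above rather than the contraction estimate itself.
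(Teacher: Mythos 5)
Your argument is correct, but it is worth noting that the paper does not actually prove this statement: it simply cites \cite[Proposition 2.17.3]{E1996}. What you have written is, in effect, a self-contained version of that standard fact. Your route --- conjugate $h$ into $Z(G)A$ via Proposition~\ref{prop:cartan_conj}, identify $P$ with the minimal parabolic $MAN$ for the chamber on which $\alpha(H_0)>0$, observe that $h$ centralizes $MA$, and contract the nilradical via the eigenvalues $e^{-j\alpha(H_0)}<1$ of $\Ad(h^{-1})$ on $\gL_{j\alpha}$ --- is the standard structural proof, and it buys slightly more than the statement asks for: it identifies the limit explicitly as the $MA$-component of $p$ in the Langlands decomposition. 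Two small points you should make explicit if you write this up in full. First, the reduction requires $H_0 \neq 0$; this is automatic here because $\omega_h^+$ is only defined when $h$ acts on $X$ as an axial isometry, i.e.\ $\Ad(h)\neq 1$. Second, besides the sign/chamber issue you correctly flag, one must also check that the stabilizer of $\omega_h^+$ is exactly $MAN$ and not larger; in rank one this follows because $N_K(A)/Z_K(A)$ has order two and its nontrivial element swaps the two endpoints of the axis, so it cannot fix $\omega_h^+$. With those remarks your proof is complete and can replace the external citation.
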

 
 \begin{proof} See for instance~\cite[Proposition 2.17.3]{E1996}.\end{proof}
 
The action of $G$ on $G/P$ is very well understood and we have the following result about the existence of invariant distributions. 

\begin{theorem}\label{thm:inv_dist}
With the notation above, if $P \leq G$ is a parabolic subgroup and $G/P$ has a non-trivial $G$-invariant smooth distribution $V$, then either
\begin{enumerate}
\item $G$ is locally isomorphic to $\SU(1,k)$ and $V$ is a codimension one distribution,
\item $G$ is locally isomorphic to $\Sp(1,k)$ and $V$ is a codimension three distribution, or
\item $G$ is locally isomorphic to $F^{-20}_4$ and $V$ is a codimension seven distribution.
\end{enumerate}
\end{theorem}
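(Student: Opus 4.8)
The plan is to convert the existence of a $G$-invariant distribution into a purely algebraic statement about the isotropy representation and then read off the answer from the restricted root space decomposition. Since $G/P$ is homogeneous and $P$ is the isotropy group at the base point $o=eP$, any $G$-invariant smooth distribution $V$ is determined by its fiber $V_o\subseteq T_o(G/P)\cong \gL/\qL$, where $\qL$ denotes the Lie algebra of $P$; this fiber must be invariant under the isotropy action of $P$ on $\gL/\qL$ induced by $\Ad$, and conversely every $\Ad(P)$-invariant subspace extends by translation to a $G$-invariant distribution. So the problem reduces to classifying the proper nonzero $\Ad(P)$-invariant subspaces of $\gL/\qL$.

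First I would set up the algebra. Fix a Cartan decomposition $\gL=\kL\oplus\pL$ and a maximal abelian $\aL\subseteq\pL$; since $G$ has real rank one, $\dim\aL=1$. Writing $\mathfrak{m}$ for the centralizer of $\aL$ in $\kL$, the restricted root space decomposition reads
\begin{align*}
\gL = \gL_{-2\alpha}\oplus\gL_{-\alpha}\oplus(\mathfrak{m}\oplus\aL)\oplus\gL_{\alpha}\oplus\gL_{2\alpha},
\end{align*}
where $\alpha$ is the positive restricted root and $\gL_{2\alpha}$ may vanish. Then $P$ is the minimal parabolic $MAN$ with $\qL=(\mathfrak{m}\oplus\aL)\oplus\gL_{\alpha}\oplus\gL_{2\alpha}$, and there is an $MA$-equivariant linear identification $\gL/\qL\cong\gL_{-\alpha}\oplus\gL_{-2\alpha}$. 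Now I would analyze the isotropy action factor by factor. The group $A$ acts on $\gL/\qL$ with the two distinct weights $-\alpha$ and $-2\alpha$, so every $A$-invariant subspace $W$ is graded, $W=W_{-\alpha}\oplus W_{-2\alpha}$ with $W_{-j\alpha}\subseteq\gL_{-j\alpha}$. The group $M$ commutes with $A$, preserves each summand, and acts irreducibly on $\gL_{-\alpha}$ and on $\gL_{-2\alpha}$ (a standard fact from the classification of rank-one symmetric spaces, where these are the tangent and normal parts of the boundary CR geometry). Hence $M$-invariance forces $W_{-\alpha}\in\{0,\gL_{-\alpha}\}$ and $W_{-2\alpha}\in\{0,\gL_{-2\alpha}\}$, leaving only four candidates.

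Finally I would use $N$-invariance to eliminate candidates. For $Y\in\gL_{\alpha}$ a direct computation shows that, modulo $\qL$, $\Ad(\exp Y)$ fixes $\gL_{-\alpha}$ and sends $X\in\gL_{-2\alpha}$ to $X+[Y,X]$ with $[Y,X]\in\gL_{-\alpha}$. Using the surjectivity $[\gL_{\alpha},\gL_{-2\alpha}]=\gL_{-\alpha}$ (a standard property of the two-step nilpotent radical of a rank-one group), any $N$-invariant $W$ with $W_{-2\alpha}\neq 0$ is forced to have $W_{-\alpha}=\gL_{-\alpha}$, i.e. $W=\gL/\qL$ is the whole tangent space. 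Thus the only proper nonzero invariant subspace is $W=\gL_{-\alpha}$, which is indeed $N$-invariant since $N$ acts trivially on it modulo $\qL$. Consequently a nontrivial $G$-invariant distribution exists precisely when $\gL_{2\alpha}\neq 0$, in which case it is unique, equals $\gL_{-\alpha}$, and has codimension $\dim_{\Rb}\gL_{-2\alpha}=\dim_{\Rb}\gL_{2\alpha}$. The condition $\gL_{2\alpha}\neq 0$ says the restricted root system is of type $BC_1$ rather than $A_1$, which excludes the real hyperbolic case $\SO(k,1)$; by the classification of rank-one simple Lie groups it leaves exactly $\SU(1,k)$, $\Sp(1,k)$, and $F^{-20}_4$, for which $\dim_{\Rb}\gL_{2\alpha}$ equals $1$, $3$, and $7$ respectively, matching the three alternatives.

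The main obstacle is the $N$-invariance step: it is what makes the $\gL_{-2\alpha}$-direction inseparable from the rest and thereby isolates $\gL_{-\alpha}$ as the unique nontrivial invariant distribution. The irreducibility of $M$ on the two root spaces and the bracket surjectivity $[\gL_\alpha,\gL_{-2\alpha}]=\gL_{-\alpha}$ are routine structural facts, but they must be quoted correctly from the structure theory of rank-one groups, and one must take care with the degenerate case $\gL_{2\alpha}=0$ to confirm that no nontrivial distribution survives there.
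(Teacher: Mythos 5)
Your proof is correct, but it proceeds quite differently from the paper's. The paper disposes of this theorem in one line by appealing to the explicit models of the four families of rank-one symmetric spaces (citing Mostow), where the boundary sphere of the $\Cb$-, $\Hb$-, and $\mathbb{O}$-hyperbolic spaces visibly carries its contact/CR-type distribution and one checks case by case that nothing else is invariant. You instead give a uniform algebraic argument: identify $G$-invariant distributions on $G/P$ with $\Ad(P)$-invariant subspaces of $\gL/\qL\cong\gL_{-\alpha}\oplus\gL_{-2\alpha}$, use the $A$-grading and the irreducibility of $M$ on each root space to cut the candidates down to four, and then use $N$-invariance together with $[\gL_{\alpha},\gL_{-2\alpha}]=\gL_{-\alpha}$ to eliminate everything except $\gL_{-\alpha}$, whose codimension $\dim_{\Rb}\gL_{2\alpha}\in\{1,3,7\}$ then reads off the trichotomy. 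This buys a single proof that explains \emph{why} the codimension is $\dim\gL_{2\alpha}$ and why $\SO(k,1)$ (root system $A_1$, so $\gL_{2\alpha}=0$) is excluded, at the cost of invoking two structural facts ($M$-irreducibility of the root spaces and the bracket surjectivity) that themselves are usually verified from the same explicit models the paper cites; note also the harmless convention that for $\SU(1,1)$ and $\Sp(1,1)$, which are locally isomorphic to $\SO(2,1)$ and $\SO(4,1)$, one relabels so that the single surviving root space is $\gL_{\alpha}$, consistent with your criterion $\gL_{2\alpha}\neq 0$. Both arguments are sound; yours is self-contained modulo standard rank-one structure theory, while the paper's is a pointer to the literature.
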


\begin{proof} In each case there is an explicit model of the symmetric space $X$, see for instance~\cite[Chapter 19]{M1973}, and this result follows immediately from the considering these models.  \end{proof}

\bibliographystyle{alpha}
\bibliography{complex_kob}

\end{document}